\documentclass{imsart}

\RequirePackage{amsthm,amsmath}
\RequirePackage[numbers]{natbib}
\RequirePackage[colorlinks,citecolor=blue,urlcolor=blue]{hyperref}
\RequirePackage{graphicx}
\RequirePackage[resetlabels]{multibib}

\usepackage{bm,amsfonts}
\usepackage{multirow}

\startlocaldefs
\numberwithin{equation}{section}
\allowdisplaybreaks
\theoremstyle{plain}
\theoremstyle{plain}
\newtheorem{thm}{Theorem}[section]

\newtheorem{lem}[thm]{Lemma}
\newtheorem{corollary}[thm]{Corollary}
\theoremstyle{remark}

\newtheorem{assm}{Assumption}
\newtheorem{example}{Example}

\newtheorem{rem}{Remark}

\def\T{{ \mathrm{\scriptscriptstyle T} }}
\def\v{{\varepsilon}}
\def\R{\mathbb R}

\def\E{\mathbb E}
\renewcommand{\P}{\mathbb P}

\renewcommand{\hat}{\widehat}
\renewcommand{\tilde}{\widetilde}
\renewcommand{\bar}{\overline}
\newcommand{\vv}[1]{\boldsymbol{#1}}

\endlocaldefs

\begin{document}

\begin{frontmatter}
\title{Many-sample tests for the dimensionality hypothesis for large covariance matrices among groups%\support{Support information of the article.}
}
\runtitle{Many-sample dimensionality test}

\begin{aug}
    \author{\fnms{Tianxing}  \snm{Mei}\ead[label=e1]{tianxingmei@ln.edu.hk}},
    \author{\fnms{Chen} \snm{Wang}%\thanksref{t1}
    \ead[label=e2]{stacw@hku.hk}}
    \and
    \author{\fnms{Jianfeng} \snm{Yao~}\ead[label=e3]{jeffyao@cuhk.edu.cn}}
    %    \runauthor{T. Mei, C. Wang and    J. Yao}
    %\thankstext{t1}{Corresponding author.}
    \address{
      Hong Kong Institute of Business Studies, Faculty of Business\\
      Lingnan University\\
      \printead{e1}}
    \address{
      Department of Statistics and Actuarial Science \\
      The University of Hong Kong \\
      \printead{e2}}
      \address{
      School of Data Science  \\
      The Chinese University of Hong Kong  (Shenzhen)\\
      \printead{e3}}
      
\end{aug}

%\begin{aug}
%\author{\fnms{First} \snm{Author}\thanksref{t1,t2}\ead[label=e1]{first@somewhere.com}}
%\and
%\author{\fnms{Second} \snm{Author}\thanksref{t3}\ead[label=e2]{second@somewhere.com}}

%\address{Address of the First and Second authors\\
%usually few lines long\\
%\printead{e1,e2}}

%\author{\fnms{Third} \snm{Author}
%\ead[label=e3]{third@somewhere.com}
%\ead[label=u1,url]{www.foo.com}}

%\address{Address of the Third author\\
%usually few lines long\\
%usually few lines long\\
%\printead{e3}\\
%\printead{u1}}

%\thankstext{t1}{Some comment}
%\thankstext{t2}{First supporter of the project}
%\thankstext{t3}{Second supporter of the project}
%\runauthor{F. Author et al.}

%\end{aug}

\begin{abstract}
In this paper, we consider procedures for testing hypotheses on the dimension of the linear span generated by a growing number of $p\times p$ covariance matrices from independent $q$ populations.  Under a proper limiting scheme where all the parameters, $q$,
$p$, and the sample sizes from the $q$ populations, are allowed to increase to infinity, we derive the asymptotic normality of the proposed test statistics. 
The proposed test procedures show satisfactory performance in finite samples under both the null and the alternative.
We also apply the proposed many-sample dimensionality test to investigate a matrix-valued gene dataset from the Mouse Aging Project and gain some new knowledge about its covariance structures.  
\end{abstract}

\begin{keyword}[class=MSC]
\kwd[Primary ]{62H15}
\kwd[; secondary ]{62H10}
\end{keyword}

\begin{keyword}
\kwd{Large covariance matrices}
\kwd{hypothesis testing}
\kwd{many-sample test} 
\kwd{dimensionality hypothesis}
\kwd{$U$-statistics} 
\end{keyword}
\tableofcontents
\end{frontmatter}

\section{Introduction}

%High-dimensional data are nowadays found in many modern scientific fields, such as genomic study, high-frequency stock pricing, or wireless communication. Traditional statistical tools for parameter estimation or hypothesis testing show very poor performance for such high-dimensional data since their validity typically needs a large sample size $n$ with respect to the number of variables $p$. New mathematical theories have emerged in the last two decades to handle high-dimensional datasets where the dimension $p$ can have a comparable or even larger magnitude than the sample size $n$. This paper concerns a particular problem in this area of high-dimensional statistics, namely hypothesis testing on covariance matrices. 
 
Hypothesis testing of covariance matrices is one of the fundamental topics in multivariate data analysis.
Many widely used statistical methods heavily rely on the structures of covariance matrices, including principal component analysis, discriminant analysis, and clustering analysis; see \cite{anderson2003,muirhead2009aspects} for more details. Therefore, testing for structural hypotheses on covariance matrices is crucial for the applications of these statistical tools.
This topic has been extensively studied in the recent literature, along with increasingly complex hypotheses on structures; see \cite{Cai2017423,Hu20162281} for a detailed review.
% Introduce existing works with a focus on different hypotheses

There are various different types of   hypotheses considered in the rich literature, among which the following two types are of special interest. 
% One sample case
The first type cares about the uncorrelation among components of a single population $\vv{x}\in\R^p$, reflected in its covariance matrix $\vv{\Sigma}$ that the corresponding entries vanish. The following are contained in this type:
\begin{itemize}
    \item[(a)] the diagonality hypothesis: ``$\vv{\Sigma}$ is diagonal'' in \cite{Lan201576,Xu20173208,Yinqiu2021154,Bodnar20192977};
    
    \item[(b)] the block-diagonality hypothesis: ``$\vv{\Sigma} = {\rm diag}(\vv{\Sigma}_{11},\ldots,\vv{\Sigma}_{hh})$'' for non-negative defined matrices $\{\vv{\Sigma}_{jj}\}$, in \cite{Hyodo2015460,Lai2022,Marques2008726,Yamada2017305};
    
    \item[(c)] the band matrix hypothesis: ``$\vv{\Sigma} = \vv{B}_k(\vv{\Sigma})$'', where the $(r,s)$th entry of $\vv{B}_k(\vv{\Sigma})$ vanishes for any $|r-s|>k$, in \cite{Yinqiu2021154,Qiu20121285}.
\end{itemize}
Testing for the independence of several populations $\vv{x}_1,\ldots,\vv{x}_q \in \R^p$ is also included in this type by combining different populations as a whole in $\R^{pq}$ and testing the block-diagonality of the covariance matrix.
Methods to test these hypotheses are usually based on the sum of squares of estimators of the zero-entries of $\vv{\Sigma}$. 

The other type focuses on the linear dependence relationship between covariance matrices, which is of the main interest of this work. In the one-sample cases, these hypotheses concerns 
whether the matrix $\vv{\Sigma}$ falls into a given sub-space of $p\times p$ matrices, including
\begin{itemize}
    \item[(i)] the identity hypothesis: ``$\vv{\Sigma} = \vv{I}_p$'' in \cite{Ledoit20021081,Bai20093822,Chen2010808};
    
    \item[(ii)] the sphericity hypothesis: ``$\vv{\Sigma} = c\vv{I}_p$'', with $c > 0$ unspecified, in \cite{Li20162973,Coelho2010711,Ledoit20021081};

    \item[(iii)] the Toeplitz-type hypothesis: ``$\vv{\Sigma}$ is a non-negative definite Toeplitz matrix'' in \cite{Butucea2016164};

    \item[(vi)] the linearity hypothesis: ``$\vv{\Sigma} = \theta_1 \vv{A}_1 + \cdots +\theta_K\vv{A}_k$'' for a given basis of symmetric covariance matrices $\{\vv{A}_j\}$ and unknown coefficients $\{\theta_j\}$, in \cite{Zheng20193300,Zhong20171185,Klein2022,Fan2022}. 
\end{itemize}
Test procedures are constructed based on a measure of the distance between the estimator of $\vv{\Sigma}$ and the given sub-space. But, in one-sample cases, the information about the matrix subspace in the null must be pre-specified, which might bring new challenges to the choices of the basis matrices of subspace in practice.    

% Multi-sample case
Hypotheses on the linear dependence of covariance matrices are also widely studied in multi-sample tests. In this scenario, we have several populations $\vv{x}_1,\ldots,\vv{x}_q\in \R^p$ with respective covariance matrices $\vv{\Sigma}_j = {\rm Cov}(\vv{x}_j)$, $j=1,\ldots,q$. 
The following hypotheses with increasingly complicated relationships are considered:
\begin{itemize}
    \item[(I)] The equality hypothesis: ``$\vv{\Sigma}_1 = \cdots = \vv{\Sigma}_q$'', in \cite{Ahmad20152619,Ahmad2017500,Coelho2010583,Coelho2012627,Ishii201999,Li2012908};

    \item[(II)] The proportionality hypothesis: ``$\vv{\Sigma}_j = w_j\vv{\Sigma}$'', where the matrix $\vv{\Sigma}$ is unspecified and $\{w_j\}$ are non-negative numbers, in \cite{Cheng2020,Liu2013293};

    \item[(III)] The linear hypothesis: for fixed $k\in\{1,\ldots,q\}$,
    \begin{equation}\label{eq:1.1}
       a_{i1} \vv{\Sigma}_1 + \cdots + a_{iq} \vv{\Sigma}_q = \vv{O},~~~~~i=1,\ldots,k,
    \end{equation}
    where coefficients $\{a_{ij}\}$ are given, as in \cite{Bai2021701};

    \item[(IV)]  {\em The dimensionality hypothesis} $H_0$: ``$d = d_0$” against the alternative $H_1$: ``$d \geq  d_0 + 1$”, where $d$ is the dimension of the linear span of $\{\vv{\Sigma}_j\}$ and $d_0$ is a pre-specified integer $d_0$.
\end{itemize}
% Dimensionality hypothesis 
Observe that by letting $a_{i,i} =1$ and $a_{i,i+1} = -1$ in (\ref{eq:1.1}), hypothesis (I) is included in (III). 
In addition, when $\{w_j\}$ in (II) is given, the proportionality hypothesis is also covered in (III) by choosing $a_{i,i} = w_{i+1}$ and $a_{i,i+1} = -w_i$. 
Thus, the extent of the linear hypothesis is indeed broader than the previous two. 
However, the requirement for the coefficients 
$\{a_{ij}\}$ to be known restricts the applicability of (III). 
For instance,
the proportionality hypothesis will be excluded from (III) when $\{w_j\}$ in (II) is unspecified.    

In this paper, we propose the dimensionality hypothesis (IV), which, to the best of our knowledge, has not been discussed in the literature yet.
On one hand, hypothesis (IV) is more general to cover (I) and (II) and relax the requirement coefficients $\{a_{ij}\}$ in (III). 
In fact, hypotheses (I) and (II) correspond to the simplest cases on (IV), with $d_0=0$ and $d_0=1$, respectively.
In addition, (IV) implies that any $(d_0+1)$ members of $\{\vv{\Sigma}_j\}$ are linearly dependent. Thus, $\{\vv{\Sigma}_j\}$ must satisfy a certain system of equations in the form (\ref{eq:1.1}) with the coefficient matrix $\{a_{ij}\}$ having rank $d_0$. 
The advantage of (IV) is that the coefficients $\{a_{ij}\}$ do not need to be prespecified. 
On the other hand, the study of testing hypothesis (IV) is motivated by our analyzing the Mouse Aging Project data \cite{Touloumis20211309}. The data set consists of $n=40$ independent matrix-valued observations, and each observation collects the expression levels of $p=46$ genes from $q=9$ different organs. The study in \cite{Touloumis20211309} and \cite{MWY2} concludes that the gene expression levels of these $q = 9$ groups are mutually independent, but the $q$ covariance matrices of gene expression levels are not proportional to each other. In other words, the hypothesis $H_0: ``d= 1''$ is rejected. To further investigate the covariance structure of the dataset, it is natural to develop a progressive test of the dimensionality hypothesis (IV) to find the ``true'' dimension of these covariance matrices.

% Many-sample tests  
We also consider the testing problem for hypothesis (IV) under a new high-dimensional setting called {\em many-sample} setting. 
Different from the aforementioned multi-sample case where the number of different groups is fixed, 
the number $q$ of populations in the many-sample setting is allowed to grow to infinity along with the dimension $p$ and the sample sizes at a certain rate. 
The concept of many-sample tests was proposed for the first time in our previous work \cite{MWY2} when we investigated two genetics data sets: 1000 Genomes Project phase 3 dataset  \cite{100gene}, and the Mouse Aging Project data \cite{mice07}. They collect independent samples of information on a great number of genes from different populations, but the number of populations $q$ in both scenarios is comparable to the sample sizes. Traditional methods in the multi-sample tests show poor performance due to the effect caused by a large number of populations. This  inspires the study of many-sample tests.
In our work \cite{MWY2}, we considered many-sample tests for hypotheses (I) and (II) and obtained desirable results to identify equality and proportionality. As mentioned before, (I) and (II) correspond to the special cases ``$d=0$'' and ``$d=1$'' of the hypothesis (IV). Therefore, it is natural for us to continue working on the dimensionality hypothesis (IV) for any $d_0 \geq 1$. 

% Key novelty of the topic
%In this paper, we will focus on the many-sample tests for the dimensionality hypothesis (IV) as a follow-up of \cite{MWY2}. 
%The key novelty is that we consider a completely new problem to detect the ``true'' dimension of the linear span of a growing number of covariance matrices, under a high-dimensional setting where dimension, as well as sample sizes, increase to infinity along with the number of different populations.

% Our contribution
The main contributions of this paper are as follows. We propose a distance measure to characterize the dimensionality hypothesis (IV), based on determinants of principal sub-matrices of the Gram matrix generated by $\{\vv{\Sigma}_j\}$, which equals zero under the null hypothesis and increases along the deviation from the null. 
Further, we construct a generalized $U$-statistic involving the $q$ sample covariance matrices to estimate the distance. 
Under a proper limiting scheme, 
we derive the asymptotic normality for the proposed $U$- statistic under both the null and the alternative. 
The power analysis is also conducted under a special alternative where there are only a few outliers that have their covariance matrices fall outside the linear span of the rest. We provide a geometric interpretation of how outliers' orthogonal parts to the major linear span influence the power.
Our many-sample test also shows a satisfactory performance in simulation studies under both the null and the above alternative. 

% Data analysis
We also apply our procedure to detecting covariance structures of the Mouse Aging Project data. The previous study in \cite{Touloumis20211309} and \cite{MWY2} reveals that the dataset possesses $q = 9$ independent columns but the covariance matrices of these columns are not mutually proportional. To further investigate the relationships between the $q=9$ covariance matrices, we perform the developed many-sample dimensionality tests sequentially and finally conclude that the linear span of the covariance matrices has dimension $3$.
To justify the conclusion, we treat the data from another viewpoint by approximating the joint covariance matrix $\vv{\Sigma}$ of the vectorization of columns of the dataset by Kronecker products. According to the discussion in Section \ref{sec:data}, if the linear span of $\{\vv{\Sigma}_j\}$ has a dimension $d_0$, then $\vv{\Sigma}$ can be represented as sums of three Kronecker product. The simulation result shows approximation by sums of three Kronecker products is indeed better than by a single Kronecker product, which supports the finding from  our dimensionality test.

% Organization of the paper
The rest of the paper is organized as follows.  We develop our many-sample dimensionality test in Section \ref{sec:main}. Simulation results and the real data application to the Mouse Aging Project are arranged in Sections \ref{sec:simu} and \ref{sec:data}, respectively.  Proofs of theorems and lemmas are provided in Appendices \ref{app:A} --- \ref{app:E}.

\section{Main Results}\label{sec:main}
\subsection{Basic settings and assumptions}\label{ssec:settings}
Consider $q$ populations $\vv{x}_1,\ldots,\vv{x}_q$ which are $q$ random vectors in $\R^p$  with zero mean and covariance matrix $\vv{\Sigma}_i=\vv{\Sigma}_{ip}={\rm cov}(\vv{x}_i)$, $i\in [q]:=\{1,\ldots,q\}$.  

Let $\mathcal{H}_0$ be the linear subspace spanned by $\{\vv{\Sigma}_i: i \in [q]\}$. The dimension of $\mathcal{H}_0$ is denoted by $d$. 
We consider testing the following hypothesis:
\begin{equation}\label{eq:hp}
\begin{split}
    H_0:~~d = d_0~~~\hbox{versus}~~~H_1:~~d > d_0,
\end{split}
\end{equation}  
where $d_0$ is a pre-specified positive integer much smaller than $q$. The null hypothesis means that there exists a linearly independent subset $\{\vv{\Sigma}_{i_1},\ldots,\vv{\Sigma}_{i_{d_0}}\}$ such that
the other $q - d_0$ covariance matrices can be represented as their linear combinations. In particular, when $d_0 = 1$, the null hypothesis $H_0$ reduces to that of proportionality, which has been studied in \cite{MWY2}.

For each $i\in [q]$, suppose that we have a sample of size $n_i$ from the population $\vv{x}_i$, 
denoted by $\vv{x}_{i,1},\vv{x}_{i,2},\ldots,\vv{x}_{i,n_i}$. 
Samples from different populations are assumed to be mutually independent. 
Let $\|\cdot\|$ be the operator norm of matrices. The following assumptions will be used. 
\begin{assm}\label{assm:2.1}
 For $i\in [q]$, $\vv{x}_i=\vv{\Sigma}_i^{1/2} \vv{z}_i$, where $\vv{z}_i$ has i.i.d. entries with zero mean, unit variance and finite eighth moment. Moreover, the fourth moment of entries of $\vv{z}_i$ is independent of $i$ and denoted by $\nu_4$.
\end{assm}

\begin{assm}\label{assm:2.2}
 There exist two positive constants $C$ and $c$ such that 
 \[
 \sup_{p,q}\max_{1\leq i\leq q}\|\vv{\Sigma}_i\|\leq C
 ~~~~\text{and} ~~~~
 \inf_{p,q} \frac{1}{pq} \sum_{i=1}^q {\rm tr}(\vv{\Sigma}_i) \geq c.
 \] 
\end{assm}

\begin{assm}\label{assm:2.3} 
For each $i\in [q]$,  $n_i=n_i(p)\to\infty$ as $p\to\infty$ such that $c_{i,p}:=p/n_i\to c_i\in(0,\infty)$. Moreover,  there exist positive constants $c_0$ and $C_0$ such that $c_0\leq c_i\leq C_0$ for all $i\in [q]$. 
\end{assm}

\begin{assm}\label{assm:2.4} 
The number of populations  $q=q(p)\to\infty$ such that $q=o(p^2)$.
\end{assm}

\subsection{The population Gram matrix}

We introduce {\em the population Gram matrix} to characterize the dimension of population covariance matrices. 
Let $\mathcal{H}$ be the Hilbert space of $p\times p$ symmetric matrices equipped with the matrix inner product $\langle\vv{A},\vv{B}\rangle={\rm tr}(\vv{A}\vv{B}^\T)/p$. 
Denote $\mathcal{H}_0$ the subspace spanned by $\{\vv{\Sigma}_1,\ldots,\vv{\Sigma}_q\}$.
The {\em population Gram matrix} $\vv{G}=(G_{ij})_{q\times q}$ generated by $\{\vv{\Sigma}_j\}$ is defined as a non-negative definite Hermitian matrix, whose entries are given by the inner product $G_{ij} = \langle \vv{\Sigma}_i, \vv{\Sigma}_j\rangle = {\rm tr}(\vv{\Sigma}_i\vv{\Sigma}^\T_j)/p$. 
It is well known that 
\begin{itemize}
    \item[(a)] $\vv{G}$ is non-negative definite;
    \item[(b)] $\vv{G}$ is non-degenerated if and only if $\{\vv{\Sigma}_j\}$ are linearly independent;
    \item[(c)] the rank of $\vv{G}$ is identical to the dimension of $\mathcal{H}_0$.
\end{itemize}
The determinant of $\vv{G}$ is of special interest in this paper. It can be viewed as the square of the hyper-volume of the parallelotope spanned by vectors $\{\vv{\Sigma}_j\}$ in $\mathcal{H}$. Hence, when $\{\vv{\Sigma}_j\}$ are linearly dependent, the parallelotope degenerates so that ${\rm det}(\vv{G}) = 0$.

To detect the true rank of the population Gram matrix, we define the following quantity for any integer $k = 1,\ldots,q$: 
\begin{equation}\label{eq:M}
    M_p^{(k)} = \binom{q}{k}^{-1}\sum_{(i_1,\ldots,i_k)\in [q]} {\rm det}(\vv{G}(i_1,\ldots,i_k))
\end{equation}
where the summation is taken over all subsets $1\leq i_1<\cdots<i_k\leq q$,
and $\vv{G}(i_1,\ldots,i_k)$ is the principal sub-matrix in $\vv{G}$ related to index $(i_1,\ldots,i_k)$.
As mentioned before, ${\rm det}(\vv{G}(i_1,\ldots,i_k))$ is the sqaured hyper-volume of the parallelotope related to $\{\vv{\Sigma}_{i_1},\ldots,\vv{\Sigma}_{i_k}\}$. Thus, from a geometric viewpoint, $M^{(k)}_p$ describes the average scatter degree of $k$-dimensional parallelotopes in $\mathcal{H}_0$.

\begin{lem}\label{lem:2.1}
Suppose that the dimension of $\mathcal{H}_0$ is $d$. Then, for any $k\leq d$, $M^{(k)}_p$ is positive; and for any $k > d$, $M^{(k)}_p = 0$. 
\end{lem}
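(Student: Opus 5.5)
The plan is to reduce everything to the Gram-determinant facts (a)--(c) listed before the statement. For each index tuple $1\le i_1<\cdots<i_k\le q$, the principal sub-matrix $\vv{G}(i_1,\ldots,i_k)$ is itself the Gram matrix of the family $\{\vv{\Sigma}_{i_1},\ldots,\vv{\Sigma}_{i_k}\}$ in $\mathcal{H}$. By (a) it is non-negative definite, so every summand ${\rm det}(\vv{G}(i_1,\ldots,i_k))\ge 0$. By (b) the summand is strictly positive exactly when $\vv{\Sigma}_{i_1},\ldots,\vv{\Sigma}_{i_k}$ are linearly independent. Since $M_p^{(k)}$ is a positive multiple of a sum of non-negative terms, it is positive if and only if at least one $k$-subset of $\{\vv{\Sigma}_1,\ldots,\vv{\Sigma}_q\}$ is linearly independent.

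For $k\le d$, I use the fact that $\mathcal{H}_0$ is spanned by $\{\vv{\Sigma}_i\}$ and has dimension $d$. Any spanning family of a finite-dimensional space contains a basis, so some $d$ of the $\vv{\Sigma}_i$ are linearly independent. Any $k\le d$ of these are still linearly independent, which gives one strictly positive summand. Hence $M^{(k)}_p>0$.

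For $k>d$, any $k$ matrices from $\{\vv{\Sigma}_i\}$ lie in the $d$-dimensional space $\mathcal{H}_0$ and are therefore linearly dependent. Equivalently by (c), the rank of $\vv{G}(i_1,\ldots,i_k)$ is at most $d<k$, so the sub-matrix is singular. Every summand vanishes and $M^{(k)}_p=0$. The cases $k>q$ do not arise, since $k$ ranges over $1,\ldots,q$ and $d\le q$.

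There is no real obstacle here. The only point requiring care is the justification of (b) for sub-families. If needed, I would verify it directly: for $\vv{a}\in\R^k$ we have $\vv{a}^\T\vv{G}(i_1,\ldots,i_k)\vv{a}=\|\sum_l a_l\vv{\Sigma}_{i_l}\|^2$ in $\mathcal{H}$. So the sub-matrix is non-negative definite, and it is singular if and only if some nontrivial combination $\sum_l a_l\vv{\Sigma}_{i_l}$ vanishes.
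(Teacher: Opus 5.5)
Your proof is correct and follows the paper's own reasoning. The paper writes no separate proof of Lemma~\ref{lem:2.1}; it deduces the lemma from the Gram-matrix facts (a)--(c) and the squared-hyper-volume interpretation of ${\rm det}(\vv{G}(i_1,\ldots,i_k))$, and your argument (non-negative summands, a basis extracted from the spanning family when $k\le d$, forced linear dependence when $k>d$, and the identity $\vv{a}^\T\vv{G}(i_1,\ldots,i_k)\vv{a}=\langle\sum_l a_l\vv{\Sigma}_{i_l},\sum_l a_l\vv{\Sigma}_{i_l}\rangle$) simply makes that deduction explicit.
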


The lemma shows that the dimension of $\mathcal{H}_0$ is the unique critical index of $\{M^{(k)}_p:k\in[q]\}$,
by which the sequence transits from a positive value to zero. Hence, the hypothesis (\ref{eq:hp}) can be  equivalently reformulated as 
\begin{equation}\label{eq:hp1}
    H_0:~M_p^{(d_0)} >0~\hbox{and}~M_p^{(d_0+1)} =0 ~~~~~\hbox{versus}~~~~~
    H_1:~M_p^{(d_0+1)} > 0.
\end{equation}
With this observation, the problem now reduces to test whether $M_p^{(d_0+1)}$ is zero given $M_p^{(d_0)} > 0$. To this end, we now focus on constructing estimators of $M_p^{(d_0+1)}$ and $M_p^{(d_0)}$ based on the $q$ samples $\{\vv{x}_{ij}\}_{1\leq i\leq q, 1\leq j\leq n_i}$.

\subsection{The sample Gram matrix}\label{ssec:sgm}
Let  $\vv{X}_{ip}=(\vv{x}_{i,1},\ldots,\vv{x}_{i,n_i})$ be the $p\times n_i$ data matrix from the $i$th sample. The corresponding sample covariance matrix is
\begin{equation}\label{eq:scv}
\vv{S}_{i,p}=\frac{1}{n_i}\sum_{k=1}^{n_i}\vv{x}_{i,k}\vv{x}^\T_{i,k}=\frac{1}{n_i}\vv{X}_{ip}\vv{X}^\T_{ip}.
\end{equation}
In the following, we define the sample Gram matrix $\hat{\vv{G}}$ based on the sample covariance matrices $\{\vv{S}_{i,p}\}$ and propose asymptotically unbiased estimators for $M_p^{(k)}$'s.

{\em The sample Gram matrix} $\hat{\vv{G}}=(\hat{G}_{ij})$ is defined as 
\begin{equation}\label{eq:estgram}
   \hat{G}_{ij} = \begin{cases}
   \cfrac{\frac{1}{p}{\rm tr}(\vv{S}_{i,p}^2) - c_{i,p} \left(\frac{1}{p}{\rm tr}(\vv{S}_{i,p})\right)^2 - \left(\frac{c_{i,p}}{p} - \frac{c_{i,p}^2}{p^2}\right)\hat{\eta}_{4,i}}{\left(1-\frac{2c_{i,p}}{p} \right)\left(1-\frac{c_{i,p}}{p}\right)},&\hbox{when}~ i = j;\\
   \cfrac{1}{p}{\rm tr}(\vv{S}_{i,p}\vv{S}_{j,p}),&\hbox{otherwise},
   \end{cases}
\end{equation}
where
\begin{equation}\label{est:2}
    \begin{split}
        \hat{\eta}_{4,i}  = \frac{(n_i-4)!}{4p n_i!}\sum_{1\leq j_1\neq j_2\neq j_3\neq j_4\leq n_i}\bigg\{&(\vv{x}_{i,j_1}-\vv{x}_{i,j_2})^\T(\vv{x}_{i,j_1}-\vv{x}_{i,j_2})\\
        -&(\vv{x}_{i,j_3}-\vv{x}_{i,j_4})^\T(\vv{x}_{i,j_3}-\vv{x}_{i,j_4})\bigg\}^2. 
    \end{split}
\end{equation}
To explain the conclusion, first note that for $i\neq j$, $\vv{S}_{i,p}$ and $\vv{S}_{j,p}$ are independent. Therefore, 
\[
\E[\hat{G}_{ij}] = \frac{1}{p}{\rm tr}(\E(\vv{S}_{i,p})\E[\vv{S}_{j,p}]) = G_{ij},
\]
so that $\hat{G}_{ij}$ is an unbiased estimator for $G_{ij}$. When $i = j$, this independence breaks down and actually ${\rm tr}(\vv{S}_{i,p}^2)/p$ is no longer an unbiased estimator for $G_{ii} = {\rm tr}(\vv{\Sigma}_j^2)/p$. The more complex formula in (\ref{eq:estgram}) for $\hat{G}_{ii}$ is indeed due to necessary condition to ${\rm tr}(\vv{S}_{i,p}^2)/p$ in order to obtain an unbiased estimator for $G_{ii}$. 
\begin{lem}
For $1\leq i\leq j\leq q$, $\E[\hat{G}_{ij}] = G_{ij}$.
\end{lem}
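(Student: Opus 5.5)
The plan is a direct moment computation. For $i\neq j$ the claim follows from the independence of $\vv{S}_{i,p}$ and $\vv{S}_{j,p}$ together with $\E\vv{S}_{i,p}=\vv{\Sigma}_i$, as already noted before the statement. It remains to treat $i=j$. Drop the index $i$ and write $\vv{x}_k=\vv{\Sigma}^{1/2}\vv{z}_k$, $n=n_i$, $c_{i,p}=p/n$. Introduce the shorthand
\[
T_1=({\rm tr}\vv{\Sigma})^2,\qquad T_2={\rm tr}(\vv{\Sigma}^2),\qquad R=(\nu_4-3)\sum_{k=1}^p(\vv{\Sigma}_{kk})^2 .
\]
All expectations will be expressed through these three quantities.

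\textbf{Basic tool.} The one ingredient is the standard quadratic-form identity, valid under Assumption~\ref{assm:2.1}: for a vector $\vv{w}$ with i.i.d.\ entries of mean $0$, variance $\sigma^2$ and fourth moment $\mu_4$, and a symmetric $\vv{A}$,
\[
{\rm Var}(\vv{w}^\T\vv{A}\vv{w})=2\sigma^4{\rm tr}(\vv{A}^2)+(\mu_4-3\sigma^4)\sum_k A_{kk}^2 .
\]
In particular, taking $\sigma^2=1$ gives $\E(\vv{x}^\T\vv{x})^2=T_1+2T_2+R$. For independent $\vv{x}_k,\vv{x}_l$ we also have $\E(\vv{x}_k^\T\vv{x}_l)^2=T_2$ and $\E(\vv{x}_k^\T\vv{x}_k\,\vv{x}_l^\T\vv{x}_l)=T_1$.

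\textbf{Step 1: the two trace terms.} Expand ${\rm tr}(\vv{S}^2)=n^{-2}\sum_{k,l}(\vv{x}_k^\T\vv{x}_l)^2$ and $({\rm tr}\vv{S})^2=n^{-2}\sum_{k,l}\vv{x}_k^\T\vv{x}_k\,\vv{x}_l^\T\vv{x}_l$, and split each sum into diagonal ($k=l$) and off-diagonal ($k\neq l$) parts. This gives
\[
\E\tfrac1p{\rm tr}(\vv{S}^2)=\tfrac{T_1+2T_2+R}{np}+\big(1-\tfrac1n\big)\tfrac{T_2}{p},
\qquad
\tfrac{c_{i,p}}{p^2}\E({\rm tr}\vv{S})^2=\tfrac{T_1+2T_2+R}{n^2p}+\tfrac1n\big(1-\tfrac1n\big)\tfrac{T_1}{p}.
\]
Taking the difference, the $T_1$ terms cancel exactly, leaving
\[
\big(1+\tfrac1n-\tfrac2{n^2}\big)\tfrac{T_2}{p}+\big(\tfrac1n-\tfrac1{n^2}\big)\tfrac{R}{p}.
\]

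\textbf{Step 2: the correction $\hat\eta_{4,i}$.} Each summand in \eqref{est:2} is $(u-v)^2$ with $u=|\vv{x}_{j_1}-\vv{x}_{j_2}|^2$ and $v=|\vv{x}_{j_3}-\vv{x}_{j_4}|^2$. Because the four indices are distinct, $u$ and $v$ are i.i.d., so $\E(u-v)^2=2{\rm Var}(u)$. The number of ordered quadruples is $n!/(n-4)!$, which cancels the normalising factor, so $\E\hat\eta_4=2{\rm Var}(u)/(4p)$.

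Now $u=\vv{w}^\T\vv{\Sigma}\vv{w}$ with $\vv{w}=\vv{z}_{j_1}-\vv{z}_{j_2}$. The entries of $\vv{w}$ are i.i.d.\ with variance $2$ and fourth moment $2\nu_4+6$. The basic tool then gives ${\rm Var}(u)=8T_2+2R$, hence $\E\hat\eta_4=(4T_2+R)/p$. This is the step needing the most care: correctly identifying the fourth moment of a difference, and checking that the combinatorial normalisation is exactly the number of distinct ordered quadruples.

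\textbf{Step 3: combine.} Since $c_{i,p}/p-c_{i,p}^2/p^2=\tfrac1n-\tfrac1{n^2}$, subtracting $(\tfrac1n-\tfrac1{n^2})(4T_2+R)/p$ from the expression in Step 1 removes $R$ completely. The coefficient of $T_2/p$ becomes
\[
1-\tfrac3n+\tfrac2{n^2}=\big(1-\tfrac1n\big)\big(1-\tfrac2n\big).
\]
This equals the denominator $(1-2c_{i,p}/p)(1-c_{i,p}/p)$ in \eqref{eq:estgram}. Therefore $\E\hat G_{ii}=T_2/p=G_{ii}$.
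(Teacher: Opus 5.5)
Your proof is correct and follows the same route as the paper. Both compute $\E[\frac{1}{p}{\rm tr}(\vv{S}_{i,p}^2)]$, $\E[(\frac{1}{p}{\rm tr}\,\vv{S}_{i,p})^2]$ and $\E[\hat{\eta}_{4,i}]$, which agree with the paper's three displayed identities, and then combine them so that the $({\rm tr}\,\vv{\Sigma}_i)^2$ and $\nu_4$ terms cancel; the only difference is that you derive these moments from the quadratic-form variance formula, while the paper quotes them from earlier work.
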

\begin{proof}
The case of $i\neq j$ has been already discussed. For the more complicate case of $i=j$, according to discussion in \cite{Cheng2020,MWY2}, it holds that
\begin{equation}
\left\{
\begin{aligned}
    \E\left[\frac{1}{p}{\rm tr}(\vv{S}_{i,p}^2)\right]
    & = \frac{1}{p}{\rm tr}(\vv{\Sigma}_{i}^2) + c_{i,p} \left(\frac{1}{p}{\rm tr}(\vv{\Sigma}_i)\right)^2 
      + \frac{c_{i,p}}{p} \left\{\frac{1}{p}{\rm tr}(\vv{\Sigma}_{i}^2) + \eta_{4,i}\right\},\\
    \E\left[\left(\frac{1}{p}{\rm tr}(\vv{S}_{i,p})\right)^2\right]
    & = \left(\frac{1}{p}{\rm tr}(\vv{\Sigma}_i)\right)^2  + \frac{c_{i,p}}{p^2}\left\{2\frac{1}{p}{\rm tr}(\vv{\Sigma}_{i}^2) + \eta_{4,i}\right\},\\
    \E[\hat{\eta}_{4,i}] 
    & = \eta_{4,i} + 4\frac{1}{p}{\rm tr}(\vv{\Sigma}_{i}^2),
\end{aligned}
\right.
\end{equation}
where
   \begin{equation}\label{eq:eta}
       \eta_{4,i} = (\nu_4-3) \frac{1}{p}{\rm tr}(\mathcal{D}(\vv{\Sigma}_i)^2),
   \end{equation}
with $\mathcal{D}(\vv{A})$ the diagonal matrix made by diagonal entries of matrix $\vv{A}$. By solving the above system of equations, we obtain  $\E[\hat{G}_{ii}] = G_{ii}$.
\end{proof}

In the next lemma, we show that determinants of all principal sub-matrices from the sample Gram matrix $\hat{\vv{G}}$ are asymptotically unbiased for the counterparts from the population Gram matrix $\vv{G}$.
\begin{lem}\label{lem:detG}
Under Assumptions \ref{assm:2.1}--\ref{assm:2.4}, for any integer $k$ less than $q$, and $k$-tuple $(i_1,\ldots,i_{k}) \in [q]$, it holds that
\begin{equation}\label{eq:Edet}
    \E\left\{{\rm det}(\hat{\vv{G}}(i_1,\ldots,i_{k}))\right\} = {\rm det}(\vv{G}(i_1,\ldots,i_{k})) + O(\frac{1}{p^2}),
\end{equation}
where $\hat{G}(i_1,\ldots,i_{k})$ and $G(i_1,\ldots,i_{k})$ are the respective principal sub-matrices related to indices $(i_1,\ldots,i_{k})$ in $\hat{\vv{G}}$ and $\vv{G}$, and the remainder $O(1/p^2)$ can be uniformly bounded by $C(k)/p^2$ with $C(k)$ a positive constant dependent only on $k$.
\end{lem}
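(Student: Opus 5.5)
We expand the determinant by the Leibniz formula,
\[
{\rm det}(\hat{\vv{G}}(i_1,\ldots,i_k)) = \sum_{\sigma\in S_k} {\rm sgn}(\sigma)\prod_{a=1}^k \hat{G}_{i_a i_{\sigma(a)}},
\]
and take expectations term by term. Since there are $k!$ terms, it suffices to show that for each permutation $\sigma$,
\[
\E\Big[\prod_{a}\hat{G}_{i_a i_{\sigma(a)}}\Big] = \prod_a G_{i_a i_{\sigma(a)}} + O(p^{-2}),
\]
with a constant depending only on $k$ (through the bounds in Assumptions \ref{assm:2.1}--\ref{assm:2.3}). Then $C(k)$ is $k!$ times that constant.

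\textbf{Rewriting each product.} We decompose $\sigma$ into disjoint cycles. Factors belonging to different cycles involve disjoint sets of indices, hence independent samples, so the expectation factorizes over cycles. We therefore reduce to a single cycle $(a_1\to a_2\to\cdots\to a_m\to a_1)$, whose product is $\hat G_{i_{a_1}i_{a_2}}\cdots \hat G_{i_{a_m}i_{a_1}}$.

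\textbf{Fixed points and transpositions.} For a fixed point the factor is $\hat G_{ii}$, which is unbiased by the previous lemma. For a $2$-cycle we get $\hat G_{ij}^2=p^{-2}{\rm tr}(\vv{S}_i\vv{S}_j)^2$ with $i\neq j$. Conditioning on $\vv{S}_j$, the variance of ${\rm tr}(\vv{S}_i\vv{A})/p$ for bounded $\vv{A}$ is $O(1/(pn_i))=O(p^{-2})$, by the standard quadratic-form variance formula using $\nu_4$ and Assumption \ref{assm:2.2}. Hence
\[
\E\hat G_{ij}^2 = \E\big[p^{-2}{\rm tr}(\vv{\Sigma}_i\vv{S}_j)^2\big]+O(p^{-2}) = G_{ij}^2+O(p^{-2}),
\]
where the last step applies the same variance bound in $\vv{S}_j$.

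\textbf{Longer cycles.} For a cycle of length $m\ge3$, all indices are distinct, so each $\vv{S}_{i}$ appears in exactly two adjacent factors. I would write $\vv{S}_i=\vv{\Sigma}_i+\vv{\Delta}_i$ with $\E\vv{\Delta}_i=0$ and expand. Any term containing a single $\vv{\Delta}_i$ vanishes by independence. The surviving correction terms contain each $\vv{\Delta}_i$ that appears at least twice. For these, the key fact is the bound
\[
\E\Big[p^{-1}{\rm tr}(\vv{\Delta}_i\vv{A})\,p^{-1}{\rm tr}(\vv{\Delta}_i\vv{B})\Big]=O(p^{-2})
\]
for $\vv{A},\vv{B}$ independent of $\vv{\Delta}_i$ with bounded operator norm, or more generally with bounded higher moments. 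This follows from the covariance of quadratic forms $\vv{z}^\T\vv{A}\vv{z}$, which needs the finite fourth and eighth moments.

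\textbf{Main obstacle.} The main obstacle is controlling the mixed terms, where the $\vv{A},\vv{B}$ above themselves contain other $\vv{\Delta}_j$'s and so are random with possibly unbounded norm. I would handle this by conditioning successively on all samples other than $i$, bounding the conditional covariance by
\[
C p^{-2}\,\big(\|\vv{A}\|_F^2/p\big)^{1/2}\big(\|\vv{B}\|_F^2/p\big)^{1/2},
\]
and then using H\"older's inequality together with uniform moment bounds such as $\E(\|\vv{S}_j\|_F^2/p)^r\le C$ for $r\le 4$ (guaranteed by the eighth moment). Each extra fluctuation factor contributes at most $O(p^{-1})$, so every correction term is $O(p^{-2})$ uniformly in the indices. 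The diagonal factors $\hat G_{ii}$ are only needed with the corresponding moment bounds, since each index appears in one factor there. This yields \eqref{eq:Edet}.
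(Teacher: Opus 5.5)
Your proposal is correct and follows the paper's route: Leibniz expansion, factorization of each permutation term over its disjoint cycles by independence, and the second-moment formula for $p^{-1}{\rm tr}(\vv{S}_i\vv{A})$ applied conditionally inside each cycle. Your centred expansion $\vv{S}_i=\vv{\Sigma}_i+\vv{\Delta}_i$ is an unrolled form of the paper's successive conditioning, and your Frobenius-norm control of the random neighbours is more careful than the paper's claim that $\|\vv{S}_{j,p}\|$ is bounded by a constant.

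One small correction: finite eighth moments do not give $\E(\|\vv{S}_j\|_F^2/p)^r\le C$ for $r=3,4$. You never need this, however. After conditioning out a doubled $\vv{\Delta}_i$, every other sample enters your bound only through $\|\vv{X}_j\|_F\|\vv{X}_j'\|_F/p$ with $\vv{X}_j,\vv{X}_j'\in\{\vv{\Sigma}_j,\vv{\Delta}_j\}$. Independence across samples then reduces everything to $\E\,p^{-1}{\rm tr}(\vv{S}_j^2)=O(1)$, which needs only fourth moments.
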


The proof of the lemma is provided in Section \ref{app:B}.

\subsection{Testing statistics and its asymptotic properties}\label{ssec:AN}

Based on Lemma \ref{lem:detG}, we propose the following estimator for $M_p^{(k)}$:
\begin{equation}\label{eq:Udef}
    \hat{M}_p^{(k)} = \binom{q}{k}^{-1} \sum_{(i_1,\ldots,i_{k})} {\rm det}(\hat{\vv{G}}(i_1,\ldots,i_{k})). 
\end{equation}
An immediate consequence of Lemma \ref{lem:detG} suggests that
\begin{equation}\label{eq:Uexp}
\begin{split}
    \E[\hat{M}_p^{(k)}] & = \binom{q}{k}^{-1} \sum_{(i_1,\ldots,i_{k})} {\rm det}(\vv{G}(i_1,\ldots,i_{k})) + O(p^{-2}) \\
    & = M_p^{(k)} + O(p^{-2}).
\end{split}
\end{equation}
Under the null hypothesis, we have $\E[\hat{M}_p^{(d_0+1)}] = O(p^{-2})$ since $M_p^{(d_0+1)} = 0$. Further, $q^{1/2} p \E[M_p^{(d_0+1)}] = O(q^{1/2}/p) = o(1)$ due to Assumption \ref{assm:2.4}. 

We derive the asymptotic normality of $\hat{M}_p^{(d_0+1)}$ under both the null and alternative cases as follows. To address the conclusion, we introduce the following notation of the matrix determinant. Let $m$ be an arbitrary positive integer, $\vv{B}_1,\ldots,\vv{B}_m$ be $p\times p$ matrices, and $\{a_{ij}\}$ be a collection of real numbers. We define the matrix determinant by
\begin{equation}
    {\rm det}\left(\begin{matrix}
        \vv{B}_1 & \vv{B}_2 & \cdots & \vv{B}_m \\
        a_{21} & a_{22} & \cdots & a_{2m}\\
        \vdots & \vdots &\ddots&\vdots\\
        a_{m1} & a_{m2} & \cdots & a_{mm}
    \end{matrix}\right) = \sum_{k=1}^m (-1)^{1+m}A_{1k}\vv{B}_k,
\end{equation}
where $A_{1k}$ is a real number, and its value equals to the determinant of the submatrix obtained by removing matrices in the $1$th row and the $k$th column. Essentially, the matrix determinant is obtained by conducting Laplace expansion according to the first row, and its value is a linear combination of matrices in the first row.

\begin{thm}\label{thm:AN}
   Suppose that $d\geq d_0$ (so that $M_p^{(d_0)} >0$). Under Assumptions \ref{assm:2.1}--\ref{assm:2.4}, we have
   \begin{equation}\label{eq:AN}
       \frac{q^{1/2}p\left(\hat{M}_p^{(d_0+1)} - M_p^{(d_0+1)}\right)}{\sigma^{(d_0)}_p} \overset{d.}{\to}\mathcal{N}(0,1),
   \end{equation}
   in which $(\sigma_p^{(d_0)})^2 = 4(d_0+1)^2\{ (M_p^{(d_0)})^2 \beta_p + r_p^{(d_0)}\}$ with
   \begin{align}
       \beta_p & = \frac{1}{q}\sum_{i=1}^q c_{i,p}^2\left\{\frac{1}{p}{\rm tr}(\vv{\Sigma}_i^2)\right\}^2 = \frac{1}{q}\sum_{i=1}^q c_{i,p}^2 G_{ii}^2, \label{eq:beta}\\
       r_p^{(d_0)} & = \frac{1}{q} \sum_{i=1}^q c_{i,p} \left\{\frac{2}{p}{\rm tr}((\vv{\Sigma}_i^{1/2} \vv{R}_i\vv{\Sigma}_i^{1/2})^2) + \frac{(\nu_4-3)}{p} {\rm tr}(\mathcal{D}(\vv{\Sigma}_i^{1/2}\vv{R}_i\vv{\Sigma}_i^{1/2})^2)\right\},
       \label{eq:R}
   \end{align}
   and
   \begin{equation}\label{eq:Ri}
       \vv{R}_i = \binom{q-1}{d_0}^{-1}\sum_{(j_1,\ldots,j_{d_0})\neq i} {\rm det}\left(\begin{array}{cccc}
       \vv{\Sigma}_i & \vv{\Sigma}_{j_1} & \cdots & \vv{\Sigma}_{j_{d_0}}\\
       G_{j_1,i} & G_{j_1,j_1} & \cdots & G_{j_1,j_{d_0}}\\
       \vdots & \vdots & \ddots & \vdots \\
       G_{j_{d_0},i} & G_{j_{d_0},j_1} & \cdots & G_{j_{d_0},j_{d_0}}
       \end{array}\right) . 
   \end{equation}
   Under $H_0$, $\vv{R}_i= \vv{O}$ for all $i$ so that $r_p^{(d_0)}$ vanishes. Consequently, under $H_0$, we have $q^{1/2} p \hat{M}_p^{(d_0+1)}/\sigma_{0,p}^{(d_0)}\overset{d.}{\to}\mathcal{N}(0,1)$, where $(\sigma_{0,p}^{(d_0)})^2 = 4(d_0+1)^2 (M_p^{(d_0)})^2 \beta_p$.
\end{thm}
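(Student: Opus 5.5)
We treat $\hat{M}_p^{(d_0+1)}$ as a generalized $U$-statistic in the independent blocks $\vv{X}_{1p},\ldots,\vv{X}_{qp}$ and prove (\ref{eq:AN}) through its Hoeffding (Efron--Stein) decomposition. Write $k=d_0+1$ and $\vv{E}_i=\vv{S}_{i,p}-\vv{\Sigma}_i$. Off the diagonal, $\hat G_{ij}-G_{ij}$ equals $\langle \vv{E}_i,\vv{\Sigma}_j\rangle+\langle\vv{\Sigma}_i,\vv{E}_j\rangle+\langle\vv{E}_i,\vv{E}_j\rangle$. Let $\hat G_{ii}-G_{ii}=:\epsilon_i$, which is centred by the preceding lemma. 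Expand ${\rm det}(\hat{\vv{G}}(i_1,\ldots,i_k))$ multilinearly and group the terms by the set of indices $i$ whose fluctuations $\vv{E}_i,\epsilon_i$ appear. This gives
\[
\hat{M}_p^{(k)}-M_p^{(k)} = \sum_{i=1}^q L_i + \sum_{i<j} Q_{ij} + \cdots + \text{(remainder)},
\]
where $L_i$ is the first-order projection $\E[\hat M_p^{(k)}\mid \vv{X}_{ip}]-\E\hat M_p^{(k)}$, and the higher-order terms are degenerate and mutually orthogonal. The bias $\E\hat M_p^{(k)}-M_p^{(k)}=O(p^{-2})$ from (\ref{eq:Uexp}) is negligible after scaling, because $q^{1/2}p\cdot p^{-2}=o(1)$ by Assumption \ref{assm:2.4}.

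\textbf{Step 1 (identifying $L_i$).} Only the $\binom{q-1}{d_0}$ subsets containing $i$ contribute to $L_i$. In such a subset the index $i$ enters through row and column $i$, and the determinant is differentiated in the entries $G_{ij}$, $j\neq i$, and in $G_{ii}$. Laplace expansion along row $i$ and column $i$ splits the contribution into two pieces. The off-diagonal entries give $2\langle \vv{E}_i,\cdot\rangle$ applied to the matrix determinant in (\ref{eq:Ri}). The diagonal entry gives $\epsilon_i$ times the complementary principal minor ${\rm det}\,\vv{G}(j_1,\ldots,j_{d_0})$, whose average is $M_p^{(d_0)}$. Hence, up to negligible terms,
\[
L_i = \frac{k}{q}\Big\{2\langle \vv{E}_i,\vv{R}_i\rangle + \epsilon_i M_p^{(d_0)}\Big\}.
\]
We would then compute ${\rm Var}(L_i)$ using the standard moments of quadratic forms $\vv{z}^\T\vv{A}\vv{z}$ under Assumption \ref{assm:2.1}:
\[
{\rm Var}\{{\rm tr}(\vv{E}_i\vv{A})\} = \frac{1}{n_i}\Big\{2{\rm tr}\big((\vv{\Sigma}_i^{1/2}\vv{A}\vv{\Sigma}_i^{1/2})^2\big)+(\nu_4-3){\rm tr}\big(\mathcal{D}(\vv{\Sigma}_i^{1/2}\vv{A}\vv{\Sigma}_i^{1/2})^2\big)\Big\}.
\]
The leading fluctuation of $\epsilon_i$ is of order $c_{i,p}/p$, with variance $\approx 4c_{i,p}^2G_{ii}^2/p^2$; this is the classical CLT for ${\rm tr}(\vv{S}^2)$ with its mean-square correction. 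Summing over $i$ yields $(\sigma_p^{(d_0)})^2/(qp^2)$. The cross term between $\epsilon_i$ and $\langle\vv{E}_i,\vv{R}_i\rangle$ should be of smaller order and must be checked. Under $H_0$, every $(d_0+1)$-tuple is linearly dependent, so the matrix determinant in (\ref{eq:Ri}) vanishes: it is the component of $\vv{\Sigma}_i$ orthogonal to ${\rm span}\{\vv{\Sigma}_{j_1},\ldots,\vv{\Sigma}_{j_{d_0}}\}$, scaled by a Gram determinant. This gives $\vv{R}_i=\vv{O}$.

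\textbf{Step 2 (CLT for the linear part).} The $L_i$ are independent across $i$. Apply Lyapunov's condition with fourth moments, which exist by the eighth-moment assumption and Assumptions \ref{assm:2.2}--\ref{assm:2.3}. Each $q^{1/2}pL_i$ has fourth moment $O(q^{-2})$, so the Lyapunov ratio is $O(1/q)\to0$.

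\textbf{Step 3 (negligibility of higher orders), the main obstacle.} We must show ${\rm Var}(\sum_{i<j}Q_{ij})$, and all higher-order projections, are $o(1/(qp^2))$. A term involving $m\ge2$ distinct indices carries a coefficient of order $q^{-m}$ (the inverse binomial times the number of subsets containing the indices). Each fluctuation contributes variance $O(p^{-2})$; for example ${\rm Var}\langle\vv{E}_i,\vv{E}_j\rangle=O(p^{-2}\cdot p^{-2}\cdot p^{2})$, which must be tracked carefully. There are $\binom{q}{m}$ orthogonal terms, so the total variance is of order $q^{-m}p^{-2m}$, which is $o(q^{-1}p^{-2})$ for $m\ge2$. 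The delicate points are three. First, the same index may appear twice through products such as $\hat G_{ij}\hat G_{ji}$, producing quadratic terms $\langle\vv{E}_i,\vv{\Sigma}_j\rangle^2$ in a single index. Their centred parts must be shown to be $O(p^{-2})$ in $L^2$ times $q^{-1}$, which is enough. Second, all bounds must be uniform in the subset, with constants depending only on $k$, as in Lemma \ref{lem:detG}. Third, we need the lower bound $\sigma_p^{(d_0)}\gtrsim1$, which follows from $M_p^{(d_0)}>0$ bounded away from zero and $\beta_p\ge c^4c_0^2$ via Assumption \ref{assm:2.2}. With these, Slutsky's lemma gives (\ref{eq:AN}), and the $H_0$ statement follows since $r_p^{(d_0)}=0$.
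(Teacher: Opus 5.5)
Your architecture (first-order Hoeffding/H\'ajek projection, Lyapunov CLT for it, the $O(p^{-2})$ bias removed via $q=o(p^2)$, higher-order components shown negligible) is the same as the paper's in Section~\ref{app:C}. However, Step~1 contains a genuine error, and two further false claims hide it.

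\textbf{The double count in $L_i$.} Let $\vv{D}_J$ denote the matrix determinant in (\ref{eq:Ri}) for $J=(j_1,\ldots,j_{d_0})$. Its first-row expansion already contains $\vv{\Sigma}_i\,{\rm det}\,\vv{G}(J)$. Indeed, $2\langle\vv{E}_i,\vv{D}_J\rangle$ is the \emph{full} derivative of ${\rm det}\,\vv{G}(i,J)$ when $\vv{\Sigma}_i$ moves in direction $\vv{E}_i$, including the linear change $2\langle\vv{E}_i,\vv{\Sigma}_i\rangle$ of the diagonal entry. The off-diagonal entries alone contribute only $2\langle\vv{E}_i,\vv{D}_J-{\rm det}\,\vv{G}(J)\,\vv{\Sigma}_i\rangle$, so adding $\epsilon_i\,{\rm det}\,\vv{G}(J)$ on top counts the $\vv{\Sigma}_i$-direction twice. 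For $d_0=1$, the linear part of $\hat G_{ii}G_{jj}-\langle\vv{S}_{i,p},\vv{\Sigma}_j\rangle^2$ is $G_{jj}\epsilon_i-2G_{ij}\langle\vv{E}_i,\vv{\Sigma}_j\rangle$, while your formula adds $2G_{jj}\langle\vv{E}_i,\vv{\Sigma}_i\rangle$. Writing $\epsilon_i=2\langle\vv{E}_i,\vv{\Sigma}_i\rangle+\epsilon_i^\circ$, the correct first-order term is
\[
L_i=\frac{k}{q}\Big\{2\langle\vv{E}_i,\vv{R}_i\rangle+M_p^{(d_0)}\,\epsilon_i^\circ\Big\}+\text{negligible}.
\]

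\textbf{The two false claims.} You assert that ${\rm Var}(\epsilon_i)\approx 4c_{i,p}^2G_{ii}^2/p^2$ and that the cross term is of smaller order. Both are false for $\epsilon_i=\hat G_{ii}-G_{ii}$. By (\ref{eq:A.5}), ${\rm Var}(p\epsilon_i)=4c_{i,p}^2G_{ii}^2+4c_{i,p}\mu(\vv{\Sigma}_i,\vv{\Sigma}_i|\vv{\Sigma}_i)+o(1)$. Also ${\rm Cov}(p\epsilon_i,{\rm tr}(\vv{E}_i\vv{A}))=2c_{i,p}\mu(\vv{\Sigma}_i,\vv{A}|\vv{\Sigma}_i)+o(1)$, which is of the same order as everything else. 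Carried out literally, your $L_i$ gives the null variance
\[
4(d_0+1)^2(M_p^{(d_0)})^2\Big\{\beta_p+\frac1q\sum_ic_{i,p}\mu(\vv{\Sigma}_i,\vv{\Sigma}_i|\vv{\Sigma}_i)\Big\},
\]
which is too large. You land on the theorem's formula only because the errors cancel.

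\textbf{What actually needs proving.} Both claims are true for $\epsilon_i^\circ$, and proving them is the real content of Step~1 rather than something to ``check''. Up to $o_{L^2}(1)$, $p\,\epsilon_i^\circ$ equals the degenerate $U$-statistic $n_i^{-2}\sum_{k\neq l}{\rm tr}(\vv{Y}_{ik}\vv{Y}_{il})$, where $\vv{Y}_{ik}=\vv{x}_{i,k}\vv{x}_{i,k}^\T-\vv{\Sigma}_i$. This holds because, at this scale, the correction $-p\,c_{i,p}(\frac1p{\rm tr}\vv{S}_{i,p})^2$ exactly cancels the $O(1)$ fluctuation $2c_{i,p}(\frac1p{\rm tr}\vv{\Sigma}_i)\,{\rm tr}(\vv{E}_i)$ of the diagonal terms $n_i^{-2}\sum_k{\rm tr}(\vv{Y}_{ik}^2)$. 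Such a statistic is orthogonal to every linear statistic ${\rm tr}(\vv{E}_i\vv{A})$, and its variance is $4c_{i,p}^2G_{ii}^2+o(1)$. The paper avoids this split by keeping $\hat G_{ii}$ whole, using (\ref{eq:A.4})--(\ref{eq:A.6}), and reassembling the $\mu$-terms by bilinearity. There, the $\alpha_{i,q}\vv{\Sigma}_i$ term in its expression for $\vv{R}_i$ is exactly the contribution of the linear part of $\hat G_{ii}$.

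\textbf{Two smaller points.} First, in Step~3 the per-component variance is $O(p^{-2})$, not $O(p^{-2m})$: $\langle\vv{E}_i,\vv{E}_j\rangle$ already has variance of order $p^{-2}$. So the $m$-th order total is $O(q^{-m}p^{-2})$, which is still enough. It follows directly from ${\rm Var}(p\,{\rm det}\,\hat{\vv{G}}(I))\le C$ (Lemma~\ref{lem:B2}) and orthogonality, without your case analysis. Second, $\liminf_p M_p^{(d_0)}>0$ does not follow from the hypotheses, which give only $M_p^{(d_0)}>0$ for each $p$. It must be added as a non-degeneracy assumption; the paper also uses it tacitly.
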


The proof of the main theorem is contained in Section \ref{app:C}.

In what follows, we consider the estimation of asymptotic variance $(\sigma_{0,p}^{(d_0)})^2$. Since $(\sigma_{0,p}^{(d_0)})^2$ is a product of $\hat{M}_p^{(d_0)}$ and $\beta_p$, we first estimate these two factors respectively and then combine them to make a consistent estimation. 
\begin{thm}\label{thm:var}
Suppose that Assumptions \ref{assm:2.1} --- \ref{assm:2.4} holds. Let \begin{equation}\label{eq:hatbeta}
    \hat{\beta}_p = \frac{1}{q} \sum_{i=1}^q c_{i,p}^2 \hat{G}_{ii}^2.
\end{equation}
Then, both $\hat{M}_p^{(d_0)}-M_p^{(d_0)}$ and $\hat{\beta}_p-\beta_p$ converge to $0$ in probability as $p$ increases. Further, we define 
\begin{equation}\label{eq:hatVar0}
    (\hat{\sigma}_{p}^{(d_0)})^2 = 4(d_0+1)^2 \left(\hat{M}_p^{(d_0)}\right)^2 \hat{\beta}_p.
\end{equation}
Then, $(\hat{\sigma}_{p}^{(d_0)})^2-(\sigma_{0,p}^{(d_0)})^2$ also converges to zero in probability. Consequently, under $H_0$, $q^{1/2} p \hat{M}_p^{(d_0+1)}/\hat{\sigma}_{p}^{(d_0)}$ converges in distribution to $\mathcal{N}(0,1)$.
\end{thm}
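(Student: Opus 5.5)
The plan is to prove the two consistency claims by second-moment (Chebyshev) arguments and then combine them with Theorem~\ref{thm:AN} via Slutsky's lemma.

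\emph{Consistency of $\hat\beta_p$.} Write $\hat\beta_p-\beta_p=\frac1q\sum_i c_{i,p}^2(\hat G_{ii}-G_{ii})(\hat G_{ii}+G_{ii})$. Under Assumption~\ref{assm:2.2}, $G_{ii}\le C^2$. The standard moment computations for $\frac1p{\rm tr}(\vv{S}_{i,p}^2)$, $\frac1p{\rm tr}(\vv{S}_{i,p})$ and the $U$-statistic $\hat\eta_{4,i}$ give ${\rm Var}(\hat G_{ii})=O(p^{-2})$ uniformly in $i$; these are the same computations cited from \cite{Cheng2020,MWY2}, using the finite eighth moment in Assumption~\ref{assm:2.1} and $c_{i,p}$ bounded by Assumption~\ref{assm:2.3}. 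Since also $\E\hat G_{ii}=G_{ii}$, Cauchy--Schwarz gives
\[
\E|\hat\beta_p-\beta_p|\le \frac{C}{q}\sum_i \bigl(\E(\hat G_{ii}-G_{ii})^2\bigr)^{1/2}\bigl(\E(\hat G_{ii}+G_{ii})^2\bigr)^{1/2}=O(p^{-1}).
\]
Hence $\hat\beta_p-\beta_p\to0$ in probability. Independence across $i$ would even give a faster rate, but it is not needed.

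\emph{Consistency of $\hat M_p^{(d_0)}$.} This is the main step. By \eqref{eq:Uexp}, the bias is $O(p^{-2})$, so it suffices to show ${\rm Var}(\hat M_p^{(d_0)})\to0$. The shortcut is to reuse the argument behind Theorem~\ref{thm:AN}, which holds for any index $k$ with $d\ge k-1$. Applying it with $d_0$ replaced by $d_0-1$ (when $d_0\ge2$; the case $k=1$ is simply the average of the $\hat G_{ii}$ and is handled directly) yields $\hat M_p^{(d_0)}-M_p^{(d_0)}=O_P(q^{-1/2}p^{-1})$, provided the corresponding variance $\sigma_p^{(d_0-1)}$ is bounded. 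Boundedness follows because all $G_{ij}\le C^2$ imply $M_p^{(k)}$ and every $\vv{R}_i$ have bounded norm.

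If one prefers not to invoke that theorem, the variance can be bounded directly with a Hoeffding-type decomposition of the generalized $U$-statistic. Expand each determinant ${\rm det}(\hat{\vv G}(i_1,\ldots,i_k))$ as a polynomial in the entries $\hat G_{i_ai_b}$. Two terms indexed by disjoint index sets are independent, so their covariance vanishes. Pairs of index sets sharing $s\ge1$ indices make up a fraction $O(q^{-s})$ of all pairs, and each such covariance is $O(p^{-2})$ uniformly. This uses bounded moments of $\hat G_{ij}-G_{ij}$ of order up to $2k$, which again rest on the eighth-moment assumption combined with the independence structure. The conclusion is ${\rm Var}(\hat M_p^{(d_0)})=O(q^{-1}p^{-2})$.

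The main obstacle is controlling these mixed high-order moments of products of $\hat G$ entries uniformly over tuples. The key step is bounding $\E\prod(\hat G_{i_ai_b}-G_{i_ai_b})$ through conditional independence given the shared samples.

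\emph{Conclusion.} Because $\hat M_p^{(d_0)}$ and $\hat\beta_p$ are bounded in probability, the continuous mapping theorem gives $(\hat\sigma_p^{(d_0)})^2-(\sigma_{0,p}^{(d_0)})^2\to0$ in probability. Under $H_0$, $M_p^{(d_0)}$ is bounded away from zero along the sequence, which must be assumed implicitly; $\beta_p\ge c'>0$ holds by Assumptions~\ref{assm:2.2} and~\ref{assm:2.3}, since $\frac1p{\rm tr}(\vv{\Sigma}_i^2)\ge(\frac1p{\rm tr}\vv{\Sigma}_i)^2$ together with Jensen's inequality over $i$. Therefore $\sigma_{0,p}^{(d_0)}$ is bounded away from zero, so $\hat\sigma_p^{(d_0)}/\sigma_{0,p}^{(d_0)}\to1$ in probability. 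Slutsky's lemma combined with the null case of Theorem~\ref{thm:AN} then gives the asymptotic normality of the studentized statistic.
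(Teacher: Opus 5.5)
Your argument is correct, but it differs from the paper's in a few places. For $\hat\beta_p$, the paper pulls out $\max_j(|\hat G_{jj}|+G_{jj})$ and bounds $\frac1q\sum_j c_{j,p}^2|\hat G_{jj}-G_{jj}|$ in $L^1$. It then controls the maximum by Chebyshev and a union bound, $\P(\max_j|\hat G_{jj}-G_{jj}|>\varepsilon)\le Cq/(\varepsilon^2p^2)$, which is where it uses Assumption~\ref{assm:2.4}. Your termwise Cauchy--Schwarz bound on $\E|\hat\beta_p-\beta_p|$ is shorter and does not need $q=o(p^2)$ at this step.

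For $\hat M_p^{(d_0)}$, the paper uses a cruder form of your fallback. Lemma~\ref{lem:B2} gives ${\rm Var}(p\,{\rm det}(\hat{\vv{G}}(I)))\le C$, so Cauchy--Schwarz bounds every covariance in the double sum by $C/p^2$. Hence ${\rm Var}(\hat M_p^{(d_0)})\le C/p^2$, with no overlap counting needed. Your primary route through Theorem~\ref{thm:AN} at level $d_0-1$ gives the sharper rate $O_P(q^{-1/2}p^{-1})$. However, it brings in the H\'{a}jek projection and Lyapunov machinery for a consistency claim, needs $d\ge d_0-1$ plus a separate case $d_0=1$, and in the end rests on the same Lemma~\ref{lem:B2}.

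One caution on your fallback: eighth moments do not give bounded moments of $\hat G_{ij}-G_{ij}$ of order $2k$ once $k\ge 3$. The fifth moment of $\hat G_{ij}$ can already be infinite under Assumption~\ref{assm:2.1}, so H\"{o}lder with $L^{2k}$ norms is not available. What keeps the requirement at eighth moments is that each $\vv{S}_{i,p}$ enters every term of a determinant expansion with degree at most two. A covariance of two such terms therefore involves each $\vv{S}_{i,p}$ with degree at most four, and iterated conditioning as in Lemma~\ref{lem:B2} only needs eighth moments of $\vv{z}_i$.

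Finally, you observe that passing from $(\hat\sigma_p^{(d_0)})^2-(\sigma_{0,p}^{(d_0)})^2=o_P(1)$ to the studentized limit requires $\sigma_{0,p}^{(d_0)}$ to stay bounded away from zero. You verify $\beta_p\ge c'>0$ and flag $\liminf_p M_p^{(d_0)}>0$ as an implicit assumption. This addresses a point the paper passes over with a bare appeal to Slutsky's lemma.
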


\begin{rem}
    The test statistic $q^{1/2}p\hat{M}_0^{(d_0+1)}/\hat{\sigma}_{p}^{(d_0)}$ is scale-invariant in the sense that if we multiple samples $\{\vv{x}_{ij}\}$ by a common factor $w>0$,  the ratio $\hat{M}_p^{(d_0+1)}/\hat{\sigma}_{0,p}^{(d_0)}$ always remain unchanged.
\end{rem}

The proof of Theorem \ref{thm:var} is contained in Section \ref{app:E}.

Finally, we propose the following procedure for the hypothesis testing problem (\ref{eq:hp}): let $\alpha \in (0,1)$ be a given significance level, and $z_{\alpha}$ be the $(1-\alpha)$th quantile of $\mathcal{N}(0,1)$,  
\[
\text{reject $H_0$ if}~~~ q^{1/2}p\hat{M}_p^{(d_0+1)}/\hat{\sigma}_{p}^{(d_0)} > z_{\alpha}.
\]

\subsection{The Analysis of Power}\label{ssec:power}

Let $\mathcal{H}_k$ be the linear span of $\{\vv{\Sigma}_i: 1\leq i\leq q-k\}$, where $k$ is an integer less than $q$.
In this section, we consider the following alternative hypothesis:
\begin{equation}\label{eq:althy}
H_1:\left\{\begin{aligned}
    &\hbox{there exists some $K\geq 1$ such that}~{\rm dim}(\mathcal{H}_K) = d_0\\
    &\hbox{and for any $j=1,\ldots,K$, $\vv{\Sigma}_{q-K+j}$ does not belong to $\mathcal{H}_K$.}
    \end{aligned}\right.
\end{equation}
It means that the majority of the populations $\{\vv{\Sigma}_i:1\leq i\leq q-K\}$ satisfies the null hypothesis, that is, the dimension of the spanned subspace is $d_0$. However, there exists a minority of outliers, say, the last $K$ groups, such that their covariance matrices fall outside $\mathcal{H}_K$. Note that the smaller $K$, the less $H_1$ will deviate from
$H_0$, and the smaller power the test will have. Hence, we focus on the following
small-$K$ assumption, which, in particular, allows $K$ to be finite. 
 \begin{assm}\label{assm:2.5}
     The number of outlier groups $K= o(q)$.
 \end{assm}

The study of the power function relies on the analysis of the orthogonal decomposition of outliers with respect to $\mathcal{H}_K$. Thus, we denote $\mathcal{H}_K^\perp$ the orthogonal complement space of $\mathcal{H}_K$, that is, 
\begin{equation}
    \mathcal{H}_K^\perp = \{\vv{A}\in\mathcal{H}: \langle\vv{A},\vv{\Sigma}_j\rangle = 0, j=1,\ldots,q-K\}.
\end{equation}
The following lemma provides an explicit representation of the orthogonal decomposition of any symmetric matrix.
\begin{lem}\label{lem:decomp}
Assume that the dimension of $\mathcal{H}_K$ is $d_0$. For a given element $\vv{\Sigma}\in\mathcal{H}$, let $(\vv{\Sigma}_0,\vv{\Sigma}_\perp)$ be the unique orthogonal decomposition of $\vv{\Sigma}$: $\vv{\Sigma} = \vv{\Sigma}_0 + \vv{\Sigma}_\perp$ with $\vv{\Sigma}_0\in\mathcal{H}_K$ and $\vv{\Sigma}_\perp \in \mathcal{H}_K^\perp$. 
For any positive integer $k = 1,\ldots,q-K$, define
\begin{equation}\label{eq:Mpk}
    M_{p,K}^{(k)} = \binom{q-K}{k}^{-1}\sum_{(i_1,\ldots,i_{k})\in[q-K]}{\rm det}(\vv{G}(i_1,\ldots,i_k)) 
\end{equation}
and
\begin{equation}\label{eq:Me}
    M^{(k)}_{p,K}(\vv{\Sigma}) = \binom{q-K}{k}^{-1} \sum_{(i_1,\ldots,i_k)\in[q-K]} {\rm det}(\vv{G}(\vv{\Sigma};i_1,\ldots,i_k)),
\end{equation}
where $\vv{G}(\vv{\Sigma};i_1,\ldots,i_k)$ is the Gram matrix generated by $\{\vv{\Sigma},\vv{\Sigma}_{i_1},\ldots,\vv{\Sigma}_{i_k}\}$. Then, we have
\begin{equation}\label{eq:e0}
    M^{(d_0)}_{p,K}\vv{\Sigma}_0 = -\binom{q-K}{d_0}^{-1}\sum_{(i_1,\ldots,i_{d_0})\in[q-K]}{\rm det}\begin{pmatrix}
    \vv{O} & \vv{\Sigma}_{i_1} & \cdots &\vv{\Sigma}_{i_{d_0}} \\
    \langle\vv{\Sigma}_{i_1},\vv{\Sigma}\rangle &
    \langle\vv{\Sigma}_{i_1},\vv{\Sigma}_{i_1}\rangle&
    \cdots&
    \langle\vv{\Sigma}_{i_1},\vv{\Sigma}_{i_{d_0}}\rangle\\
    \vdots & \vdots& \ddots&\vdots\\
    \langle\vv{\Sigma}_{i_{d_0}},\vv{\Sigma}\rangle &
    \langle\vv{\Sigma}_{i_{d_0}},\vv{\Sigma}_{i_1}\rangle&
    \cdots&
    \langle\vv{\Sigma}_{i_{d_0}},\vv{\Sigma}_{i_{d_0}}\rangle
    \end{pmatrix}
\end{equation}
and
\begin{equation}\label{eq:eperp}
    M^{({d_0})}_{p,K} \vv{\Sigma}_{\perp} = \binom{q-K}{{d_0}}^{-1}\sum_{(i_1,\ldots,i_{d_0})\in[q-K]}{\rm det}\begin{pmatrix}
    \vv{\Sigma} & \vv{\Sigma}_{i_1} & \cdots &\vv{\Sigma}_{i_{d_0}} \\
    \langle\vv{\Sigma}_{i_1},\vv{\Sigma}\rangle &
    \langle\vv{\Sigma}_{i_1},\vv{\Sigma}_{i_1}\rangle&
    \cdots&
    \langle\vv{\Sigma}_{i_1},\vv{\Sigma}_{i_{d_0}}\rangle\\
    \vdots & \vdots& \ddots&\vdots\\
    \langle\vv{\Sigma}_{i_{d_0}},\vv{\Sigma}\rangle &
    \langle\vv{\Sigma}_{i_{d_0}},\vv{\Sigma}_{i_1}\rangle&
    \cdots&
    \langle\vv{\Sigma}_{i_{d_0}},\vv{\Sigma}_{i_{d_0}}\rangle
    \end{pmatrix}
\end{equation}
Consequently, it holds that
\begin{equation}\label{eq:detdecomp}
    M^{({d_0})}_{p,K}(\vv{\Sigma}) = M^{({d_0})}_{p,K}\langle\vv{\Sigma}_\perp,\vv{\Sigma}_\perp\rangle. 
\end{equation}
\end{lem}
The detailed proof of Lemma \ref{lem:decomp} is given in Section \ref{app:decomp}.

Now, we let $(\vv{\Sigma}_{0,q-K+j},\vv{\Sigma}_{\perp,q-K+j})$ be the orthogonal decomposition of the outlier $\vv{\Sigma}_{q-K+j}$ for $j=1,\ldots,K$. The following theorem reveals how the orthogonal parts $\{\vv{\Sigma}_{\perp,q-K+j}\}$ influences the power. 
\begin{thm}\label{thm:pw}
Suppose that Assumptions \ref{assm:2.1} --- \ref{assm:2.5} and the alternative hypothesis (\ref{eq:althy}) hold. Then, we have 
\[
\frac{q^{1/2}p(\hat{M}_p^{(d_0+1)}-M_p^{(d_0+1)})}{2(d_0+1)M_{p,K}^{(d_0)}\beta_p^{1/2}} \overset{d.}{\to}\mathcal{N}(0,1),
\]
in which, for $p$ large enough,
\begin{equation}\label{eq:ratio}
    \frac{M_p^{(d_0+1)}}{(d_0+1)M_{p,K}^{(d_0)}} \geq \frac{1}{q}\left\{\sum_{j=1}^K \frac{1}{p} {\rm tr}(\vv{\Sigma}_{\perp,q-K+j}^2)\right\}(1+o(1)). 
\end{equation}
Moreover, we  have $(\hat{\sigma}_p^{(d_0)})^2 -4(d_0+1)^2({M}_{p,K}^{(d_0)})^2 \beta_p$ converges to zero in probability. Consequently, with 
\begin{equation}\label{eq:outbound}
    \gamma_{p,q} := \frac{1}{2\sqrt{q}\beta_p^{1/2}}\sum_{j=1}^K {\rm tr}\left[(\vv{\Sigma}_{\perp,q-K+j})^2\right] >0,
\end{equation}
and for a given significant level $\alpha\in(0,1)$, the power function satisfies, for $p$ large enough,
\begin{equation}\label{eq:altpw}
    \begin{split}
    \P_{H_1}\left(\frac{q^{1/2}p\hat{M}_p^{(d_0+1)}}{\hat{\sigma}_{p}^{(d_0)}} > z_{\alpha}\right)
    \geq \Phi\left(\gamma_{p,q} - z_{\alpha}\right) + o(1),
    \end{split}
\end{equation}
where $\Phi$ is the distribution function of $\mathcal{N}(0,1)$ and $z_{\alpha}$ is the $(1-\alpha)$th quantile of $\mathcal{N}(0,1)$. 
In particular, if
\[
    \gamma := \liminf_{p\to\infty} \frac{1}{\beta_p^{1/2}}\sum_{j=1}^K \frac{1}{p}{\rm tr}\left[(\vv{\Sigma}_{\perp,q-K+j})^2\right] >0,
\]
the power function goes to $1$ for any significant level $\alpha\in(0,1)$. 
\end{thm}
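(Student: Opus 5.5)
The plan is to derive the statement from Theorem \ref{thm:AN}, with three additions: identify the limit of $\sigma_p^{(d_0)}$ under the outlier alternative (\ref{eq:althy}), bound $M_p^{(d_0+1)}$ from below through Lemma \ref{lem:decomp}, and transfer the variance estimator via Theorem \ref{thm:var}-type arguments.

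\textbf{Step 1: asymptotic normality with the stated normalisation.} Since $d\ge d_0$, Theorem \ref{thm:AN} gives asymptotic normality with variance $4(d_0+1)^2\{(M_p^{(d_0)})^2\beta_p+r_p^{(d_0)}\}$. It then suffices to show two things.

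First, $M_p^{(d_0)}-M_{p,K}^{(d_0)}\to0$. The sums defining $M_p^{(d_0)}$ and $M_{p,K}^{(d_0)}$ differ only in the $\binom q{d_0}-\binom{q-K}{d_0}=O(Kq^{d_0-1})$ tuples that involve an outlier. Each determinant is bounded by $C(d_0)$ by Assumption \ref{assm:2.2}, and the normalisations differ by a factor $1+O(K/q)$, so the difference is $O(K/q)=o(1)$ by Assumption \ref{assm:2.5}.

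Second, $r_p^{(d_0)}=o(1)$. For a non-outlier index $i\le q-K$, any tuple $(j_1,\dots,j_{d_0})\subset[q-K]$ gives a vanishing matrix determinant in $\vv R_i$, because by (\ref{eq:eperp}) it equals a multiple of the orthogonal part of $\vv\Sigma_i\in\mathcal H_K$, which is $\vv O$. Hence only the $O(Kq^{d_0-1})$ tuples containing an outlier contribute, each with operator norm bounded by a constant. This gives $\|\vv R_i\|=O(K/q)$ for $i\le q-K$, and $\|\vv R_i\|=O(1)$ for the $K$ outliers. Therefore $r_p^{(d_0)}=O(K^2/q^2+K/q)=o(1)$.

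Because $\beta_p$ is bounded below (Assumptions \ref{assm:2.2} and \ref{assm:2.3}, with $G_{ii}\ge (\mathrm{tr}\vv\Sigma_i/p)^2$ and Jensen) and $M_{p,K}^{(d_0)}$ is bounded below (it is a positive average of Gram determinants), the ratio of the two standard deviations tends to $1$. Slutsky's lemma then gives the first display.

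\textbf{Step 2: the lower bound (\ref{eq:ratio}).} I would drop every nonnegative term in $M_p^{(d_0+1)}$ except those tuples consisting of exactly one outlier $q-K+j$ together with $d_0$ indices from $[q-K]$. Summing these gives
\[
M_p^{(d_0+1)}\ge\binom q{d_0+1}^{-1}\binom{q-K}{d_0}\sum_{j=1}^K M^{(d_0)}_{p,K}(\vv\Sigma_{q-K+j}).
\]
By (\ref{eq:detdecomp}) this equals
\[
\binom q{d_0+1}^{-1}\binom{q-K}{d_0}M_{p,K}^{(d_0)}\sum_j\langle\vv\Sigma_{\perp,q-K+j},\vv\Sigma_{\perp,q-K+j}\rangle.
\]
The binomial ratio is $\frac{d_0+1}{q}(1+O(K/q))$, which yields (\ref{eq:ratio}).

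\textbf{Step 3: the variance estimator and the power bound.} Theorem \ref{thm:var}'s consistency proof for $\hat M_p^{(d_0)}$ and $\hat\beta_p$ does not use $H_0$; it rests on unbiasedness up to $O(p^{-2})$ and variance bounds. So $(\hat\sigma_p^{(d_0)})^2-4(d_0+1)^2(M_p^{(d_0)})^2\beta_p\to0$ in probability, and Step 1 lets us replace $M_p^{(d_0)}$ by $M_{p,K}^{(d_0)}$. Writing $s_p=2(d_0+1)M_{p,K}^{(d_0)}\beta_p^{1/2}$, the rejection probability is
\[
\P\Big(\tfrac{q^{1/2}p(\hat M-M)}{s_p}>z_\alpha\tfrac{\hat\sigma}{s_p}-\tfrac{q^{1/2}pM_p^{(d_0+1)}}{s_p}\Big).
\]
By Step 2, $q^{1/2}pM_p^{(d_0+1)}/s_p\ge\gamma_{p,q}(1+o(1))$; the factor $p$ cancels against the $1/p$ in (\ref{eq:ratio}), reproducing $\gamma_{p,q}$. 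Combining this with $\hat\sigma/s_p\to1$ and uniform normal approximation (Pólya) gives (\ref{eq:altpw}). If $\gamma>0$, then $\gamma_{p,q}\ge c\,p\,q^{-1/2}\to\infty$ since $q=o(p^2)$, so the power tends to $1$.

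\textbf{Main obstacle.} The delicate step is the bound on $r_p^{(d_0)}$, specifically controlling the operator or Frobenius norms of the outlier-involving matrix determinants uniformly. I would handle this by expanding along the first row: each cofactor is a bounded Gram minor and each matrix $\vv\Sigma_j$ has bounded norm.
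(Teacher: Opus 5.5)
Your proposal is correct and follows the paper's proof step for step: Theorem~\ref{thm:AN} combined with $M_p^{(d_0)}-M_{p,K}^{(d_0)}=O(K/q)$ and $r_p^{(d_0)}=o(1)$ (only outlier-containing tuples survive in $\vv{R}_i$ for $i\le q-K$), the lower bound from one-outlier tuples via (\ref{eq:detdecomp}), where dropping the nonnegative multi-outlier terms directly is cleaner than the paper's remainder bookkeeping, and the Appendix~\ref{app:E} consistency argument plus Slutsky. One point to tighten, which the paper also passes over silently: positivity of $M_{p,K}^{(d_0)}$ for each $p$ does not give the uniform lower bound you need for $\sigma_p^{(d_0)}/\{2(d_0+1)M_{p,K}^{(d_0)}\beta_p^{1/2}\}\to 1$ (nearly collinear non-outlier covariances would make it fail), so you should state a nondegeneracy condition such as $\liminf_{p} M_{p,K}^{(d_0)}>0$ explicitly.
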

\begin{rem}
The ratio in (\ref{eq:outbound}) is also scale-invariant and captures the deviation degree of outliers from $\mathcal{H}_K$. Indeed,
according to the definition (\ref{eq:beta}), $\beta_p$ is a weighted average of squares of $\langle\vv{\Sigma}_j,\vv{\Sigma}_j\rangle$. So, the denominator $\beta_p^{1/2}$ can be viewed as certain average of squared lengths of $\{\vv{\Sigma}_j\}$. Then, (\ref{eq:outbound}) is a fraction of the sum of squared lengths of outliers' orthogonal parts to the averaging square length of all populations. The larger the ratio, the more significant outliers tend to the orthogonal space $\mathcal{H}_K^\perp$.
\end{rem}
The detailed proof of Theorem \ref{thm:pw} is provided in Section \ref{app:D}.

In what follows, we consider two examples of the 'needle-in-a-haystack' problems, which means there is only one outlier group, i.e., $K=1$.

\begin{example}[$d_0=2$]\label{e1}
Suppose that $\{\vv{\Lambda}_1,\vv{\Lambda}_2,\vv{\Lambda}_3\}$ are three linearly independent non-negative definite matrices, i.e., ${\rm det}(\vv{G}(1,2,3))>0$. Let $\{I_j:j=1,\ldots,q\}$ be a set of integers taken randomly from $\{1,2\}$. Suppose that
\begin{equation}\label{eq:ex1}
\vv{\Sigma}_j = \begin{cases}
    \vv{\Lambda}_{I_j}, &\hbox{if}~j=1,\ldots,q-1;\\
    (1-w)\vv{\Lambda}_{I_q} + w\vv{\Lambda}_3, & \hbox{if}~j =q,
\end{cases}
\end{equation}
where $0\leq w\leq 1$. In addition, we assume that $\{1,2\}\subset\{I_j:j\leq q-1\}$ so that ${\rm dim}(\mathcal{H}_1) = 2$. (Otherwise, the space $\mathcal{H}_1$ will degenerate with dimension strictly less than $2$.)
It can be seen from (\ref{eq:ex1}) that the null hypothesis $H_0$ holds when $w =0$ and the alternative $H_1$ holds when $w>0$.

To analyze the power, we let $\vv{\Lambda}_{\perp,3}$ be the part of $\vv{\Lambda}_3$ orthogonal to the subspace ${\rm span}(\{\vv{\Lambda}_1,\vv{\Lambda}_2\})$. Then, by (\ref{eq:Mpk}), (\ref{eq:Me}) and (\ref{eq:detdecomp}), we find
\begin{align*}
M_p^{(3)} & = \binom{q}{3}^{-1}\sum_{(i_1,i_2,i_3)} {\rm det}(\vv{G}(i_1,i_2,i_3)) \\
&= \binom{q}{3}^{-1} \sum_{(i_1,i_2)\neq q} {\rm det}(\vv{G}(i_1,i_2,q)) \\
& = \binom{q-1}{2}\binom{q}{3}^{-1} M_{p,1}^{(2)}(\vv{\Sigma}_q)\\
&= \binom{q-1}{2}\binom{q}{3}^{-1} \left(\frac{w^2}{p}{\rm tr}(\vv{\Lambda}_{\perp,3}^2)\right) M_{p,1}^{(2)}\\
& = \frac{3w^2}{q} \frac{1}{p}{\rm tr}(\vv{\Lambda}_{\perp,3}^2)M_{p,1}^{(2)}. 
\end{align*}
According to Theorem \ref{thm:pw}, the power function satisfies (for $p$ large enough), 
\begin{equation}\label{exm:1}
    \P_{H_1}\left(\frac{q^{1/2}p \hat{M}_p^{(3)}}{\hat{\sigma}_{p}^{(2)}}>z_{\alpha}\right) = \Phi\left(\frac{p}{\sqrt{q}}\frac{w^2}{2\beta_p^{1/2}} \frac{1}{p}{\rm tr}(\vv{\Lambda}_{\perp,3}^2) - z_{1-\alpha}\right) +o(1),
\end{equation}
where
\[
\beta_p = \frac{1}{q-1}\sum_{j=1}^{q-1} c_{j,p}^2 \frac{1}{p}{\rm tr}(\vv{\Lambda}_{I_j}^2)^2 + o(1).
\]

\end{example}

\begin{example} [$d_0=3$] \label{e2}
Suppose that for $j = 1,\ldots,q-1$, $\vv{\Sigma}_j = (\sigma_{rs,j})_{1\leq r,s\leq p}$ is a tridiagonal matrix, i.e.,
\[
\sigma_{rs,j} = \begin{cases}
1+a_j^2 + b_j^2, & \hbox{if}~r=s;\\
a_j(1+b_j), & \hbox{if}~|r-s| =1; \\
b_j, &\hbox{if}~|r-s|=2;\\
0, & \hbox{otherwise},
\end{cases}
\]
where $\{a_j\}$ and $\{b_j\}$ are two sequences of real numbers. For $j =q$, we assume that $\vv{\Sigma}_q =\vv{\Sigma}_q^w = (\sigma_{rs,q}^w)_{1\leq r,s\leq q}$ satisfies
\[
\sigma_{rs,q}^w = \begin{cases}
1+a_q^2+b_q^2 + w^2 A_0^2, & \hbox{if}~r=s;\\
a_q+ a_qb_q + wb_qA_0, & \hbox{if}~|r-s| =1; \\
b_q + wa_qA_0, &\hbox{if}~|r-s| =2;\\
wA_0, & \hbox{if}~|r-s|=3;\\
0, & \hbox{otherwise},
\end{cases}
\]
where $a_q$, $b_q$ and $A_0$ are real numbers and $w\geq 0$ is a varying non-negative coefficient.

Let $\vv{\Lambda}_0 = \vv{I}_p$ and $\vv{\Lambda}_j = (I_{|r-s|=j})_{1\leq r,s\leq p}$ for $j=1,2,3$, where $I_A$ is the indicator function of event $A$. Then, for $j=1,\ldots,q-1$, it holds that
\[
\vv{\Sigma}_j = (1+a_j^2+ b_j^2) \vv{\Lambda}_0 + a_j(1+b_j)\vv{\Lambda}_1+b_j\vv{\Lambda}_2,
\]
and for $j=q$,
\[
\vv{\Sigma}_q = (1+a_q^2+b_q^2 +w^2 A_0^2) \vv{\Lambda}_0 + (a_q+a_qb_q+wb_qA_0)\vv{\Lambda}_1 + (b_q+wa_qA_0)\vv{\Lambda}_2 +wA_0 \vv{\Lambda}_3.
\]
Hence, ${\rm dim}(\mathcal{H}_1) = 3$. Further, when $w =0$, $\{\vv{\Sigma}_j\}$ are linear combinations of $\vv{\Lambda}_0$, $\vv{\lambda}_1$  and $\vv{\Lambda}_2$ so that $H_0$ holds. Otherwise, when $w >0$, $H_1$ holds since $\vv{\Sigma}_q$ fall outside the space spanned by $\vv{\Lambda}_0, \vv{\Lambda}_1$ and $\vv{\Lambda}_2$.

Note that 
\[
\vv{G}(\{\vv{\Lambda}_j\}) = \begin{pmatrix}
1 & 0& 0 & 0\\
0  & 2-\frac{2}{p} & 0 & 0\\
0  & 0   & 2-\frac{4}{p}& 0\\
0  & 0   & 0 & 2-\frac{6}{p}
\end{pmatrix}
\]
is diagonal, which implies that $\{\vv{\Lambda}_j\}$ are orthogonal. Hence, by (\ref{eq:Mpk}), (\ref{eq:Me}) and (\ref{eq:detdecomp}), we have
\begin{align*}
M_p^{(4)} & = \binom{q}{4}^{-1}\sum_{(i_1,i_2,i_3,i_4)} {\rm det}(\vv{G}(i_1,i_2,i_3,i_4)) \\
& = \binom{q}{4}^{-1} \sum_{(i_1,i_2,i_3)\neq q} {\rm det}(\vv{G}(i_1,i_2,i_3,q)) \\
& = \frac{4w^2 A_0^2}{q}\left(2-\frac{6}{p}\right) M_{p,K}^{(2)}\\
& = \frac{8w^2A_0^2}{q} M_{p,1}^{(3)}. 
\end{align*}
According to Theorem \ref{thm:pw}, the power function satisfies (for large $p$), 
\begin{equation}\label{exm:2}
    \P_{H_1}\left(\frac{q^{1/2}p \hat{M}_p^{(4)}}{\hat{\sigma}_{p}^{(3)}}>z_{1-\alpha}\right) = \Phi\left(\frac{p}{\sqrt{q}}\frac{w^2 A_0^2}{\beta_p^{1/2}} - z_{1-\alpha}\right) +o(1),
\end{equation}
where
\[
\beta_p = \frac{1}{q-1}\sum_{j=1}^{q-1} c_{j,p}^2\left\{(1+a_j^2+b_j^2)^2 + 2a_j^2(1+b_j)^2+2b_j^2\right\}^2 + o(1).
\]
\end{example}

\subsection{Comparison with the test of proportionality hypothesis in \cite{MWY2}}

When $d_0=1$, the null hypothesis reduces to the proportionality hypothesis discussed in Section 2 of \cite{MWY2}. It is natural for us to make a comparison of two proportionality test procedures in \cite{MWY2} and this paper.
Recall that the quantities to characterize the null hypothesis in \cite{MWY2} and \ref{sec:main} are both constructed in the form
\[
\binom{q}{2}^{-1}\sum_{i<j} d(\vv{\Sigma}_i,\vv{\Sigma}_j)
\]
with $d(\cdot,\cdot)$ a distance to characterize the proportionality.
The main difference between the two methods is the ideas behind the choices of distance. 
In fact, the one used in  \cite{MWY2} is motivated by the fact that matrices $\vv{A}$ and $\vv{B}$ are mutually proportional if and only if the matrix difference
\begin{equation}\label{eq:prop}
\begin{split}
\vv{D}(\vv{A},\vv{B}) &= \left(\frac{1}{p}{\rm tr}(\vv{B})\right)\vv{A} - \left(\frac{1}{p}{\rm tr}(\vv{A})\right)\vv{B}\\
&={\rm det}\begin{pmatrix}
    \vv{A} & \vv{B} \\
    \frac{1}{p}{\rm tr}(\vv{A}) & \frac{1}{p}{\rm tr}(\vv{B}) 
\end{pmatrix} = \vv{O}.
\end{split}
\end{equation}
Thus, the corresponding distance is defined as the trace of squared $\vv{D}(\vv{A},\vv{B})$, that is,
\[
\begin{split}
d_{\rm prop}(\vv{A},\vv{B}) &= {\rm tr}(\vv{D}(\vv{A},\vv{B})\vv{D}(\vv{A},\vv{B})^\T) \\
& = {\rm tr}(\vv{A}\vv{A}^\T) \left(\frac{1}{p}{\rm tr}(\vv{B})\right)^2 + {\rm tr}(\vv{B}\vv{B}^\T) \left(\frac{1}{p}{\rm tr}(\vv{A})\right)^2 \\
& - 2{\rm tr}(\vv{A}\vv{B}^\T)\left(\frac{1}{p}{\rm tr}(\vv{A})\right)\left(\frac{1}{p}{\rm tr}(\vv{B})\right).
\end{split}
\]
It is easy too see that $\vv{D}(\vv{A},\vv{B}) = \vv{O}$ if and only if $d_{\rm prop}(\vv{A},\vv{B}) =0$.

The distance used in this paper is constructed based on the determinant of the Gram matrix of $\vv{A}$ and $\vv{B}$, that is, 
\[
\begin{split}
d(\vv{A},\vv{B}) &= p {\rm det}\begin{pmatrix}
    \frac{1}{p}{\rm tr}(\vv{A}\vv{A}^\T) & \frac{1}{p} {\rm tr}(\vv{A}\vv{B}^\T) \\
    \frac{1}{p} {\rm tr}(\vv{B}\vv{A}^\T) & \frac{1}{p}{\rm tr}(\vv{B}\vv{B}^\T).
\end{pmatrix} \\
&= p\left[\left(\frac{1}{p}{\rm tr}(\vv{A}\vv{A}^\T)\right)\left(\frac{1}{p}{\rm tr}(\vv{B}\vv{B}^\T)\right) - \left(\frac{1}{p}{\rm tr}(\vv{A}\vv{B}^\T)\right)^2\right].
   \end{split}
\]
When $\vv{A}$ and $\vv{B}$ are mutually proportional, the Gram matrix of $\vv{A}$ and $\vv{B}$ degenerates so that the determinant equals to zero. Otherwise, the Gram matrix will be full-ranked with a positive value of its determinant. 

Although these two distances both characterize the proportionality well, 
we prefer the one used in this paper because the idea of characterizing the dimension of the linear span of matrices by determinants of their Gram matrices is more general and can be easily extended to fit cases with arbitrary dimensions. In contrast, the method in the former chapter is too restrictive since it is hard to find a high-dimensional analog of (\ref{eq:prop}) to characterize cases with dimensions higher than two.

\section{Monte-Carlo Results}\label{sec:simu}

Our simulation study will focus on 'needle-in-a-haystack' problems considered in Examples \ref{e1} and \ref{e2} in the last section.
The design follows the settings in Theorem \ref{thm:pw} with dimension $p = 400$, group number $q=100$ and sample sizes $\{n_j\}$ randomly taken from integers between $200$ and $600$. In addition, samples are generated with the structure $\vv{x}=\vv{\Sigma}^{1/2}\vv{z}$, where two different types of noises $\vv{z} =(z_j)$ are considered: (i). standard normal distribution with $z_j \sim \mathcal{N}(0,1)$; (ii). centered Gamma distribution with $z_j\sim {\sf Gamma}(4,2)-2$.

Following the notations in Section \ref{ssec:power}, we have $\mathcal{H}_1 = {\rm span}\{\vv{\Sigma}_j: j =1,\ldots,99\}$. Further, as in Examples \ref{e1} and \ref{e2}, $\vv{\Sigma}_{100} = \vv{\Sigma}^w_{100}$ are chosen to be dependent on a varying coefficient $w$ taking values in $[0,1]$, such that when $w =0$, $\vv{\Sigma}_{100}$ falls in $\mathcal{H}_1$ and $H_0$ holds ; when $w>0$, $\vv{\Sigma}_{100}$ gradually deviates from $\mathcal{H}_1$ as $w$ increases to $1$. 

Other parameters are chosen as follows.
\begin{itemize}
    \item[(a)] ($d_0=2$). As in Example \ref{e1}, 
    we randomly generate three orthogonal matrices $\{\vv{U}_1,\vv{U}_2,\vv{U}_3\}$ and three diagonal matrices $\{\vv{D}_1,\vv{D}_2,\vv{D}_3\}$ with diagonal entries uniformly chosen from $(0,1)$. Then, we define $\vv{\Lambda}_j =\vv{U}_j^\T \vv{\Lambda}_j\vv{U}_j$. Meanwhile, we generate a uniform sample $\{I_1,\ldots,I_{q}\}$ from  $\{1,2\}$.
   
    \item[(b)] ($d_0=3$). As in Example \ref{e2}, we  choose $\{a_j\}$ and $\{b_j\}$ randomly from the interval $[-2,2]$, and take $A_0 = 2.5$. 
\end{itemize}

For each $w$ from $0$ to $1$ with step size $0.1$, we repeat simulations $800$ times for cases (a) and (b), each with two different noises, and derive the empirical sizes and powers. The empirical sizes under $H_0$ (or equivalently, $w=0$) are collected in Table \ref{tab:4.1}. We also display in Figure \ref{fig:4.1} the graphs of curves of empirical power functions for different types of noises and theoretical power curves derived using (\ref{exm:1}) and (\ref{exm:2}). It can be observed that our many-sample dimensionality test controls the size well under $H_0$, and effectively rejects the null when the unique outlier covariance matrix deviates from $\mathcal{H}_1$ as $w$ increases. 
The sightly conservative sizes 
are explained by the difficulty of the testing problem where in total $q=100$ populations matrices are involved for small subspace dimensions $d_0 = 2$ and $d_0=3$, respectively, whole the sample size are not very large. 

%--------------------------------------
\begin{table}
\caption{Empirical sizes of the many-sample dimensionality test.} 
\label{tab:4.1}
\centering
\begin{tabular}{@{}cccc@{}}
\hline
\multicolumn{2}{c}{Case (a)}&
\multicolumn{2}{c}{Case (b)}
 \\ [3pt]
\hline
Normal & Gamma & 
Normal & Gamma
 \\
\hline
$0.035$ & $0.0425$ & 
$0.035$ & $0.0338$
 \\
\hline
\end{tabular}
\end{table}

%----------------------------------------

\begin{figure}
\centering
\includegraphics[scale=0.37]{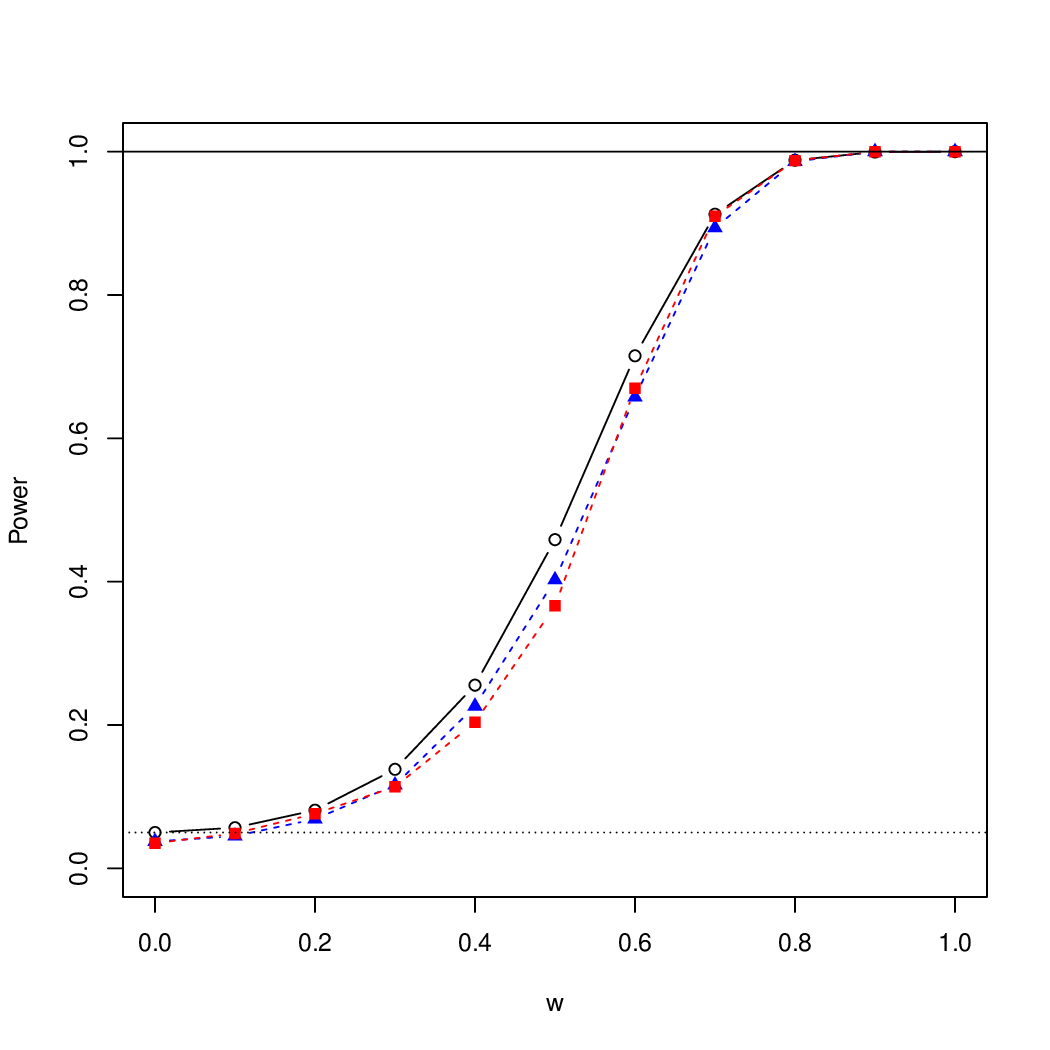}
\includegraphics[scale=0.37]{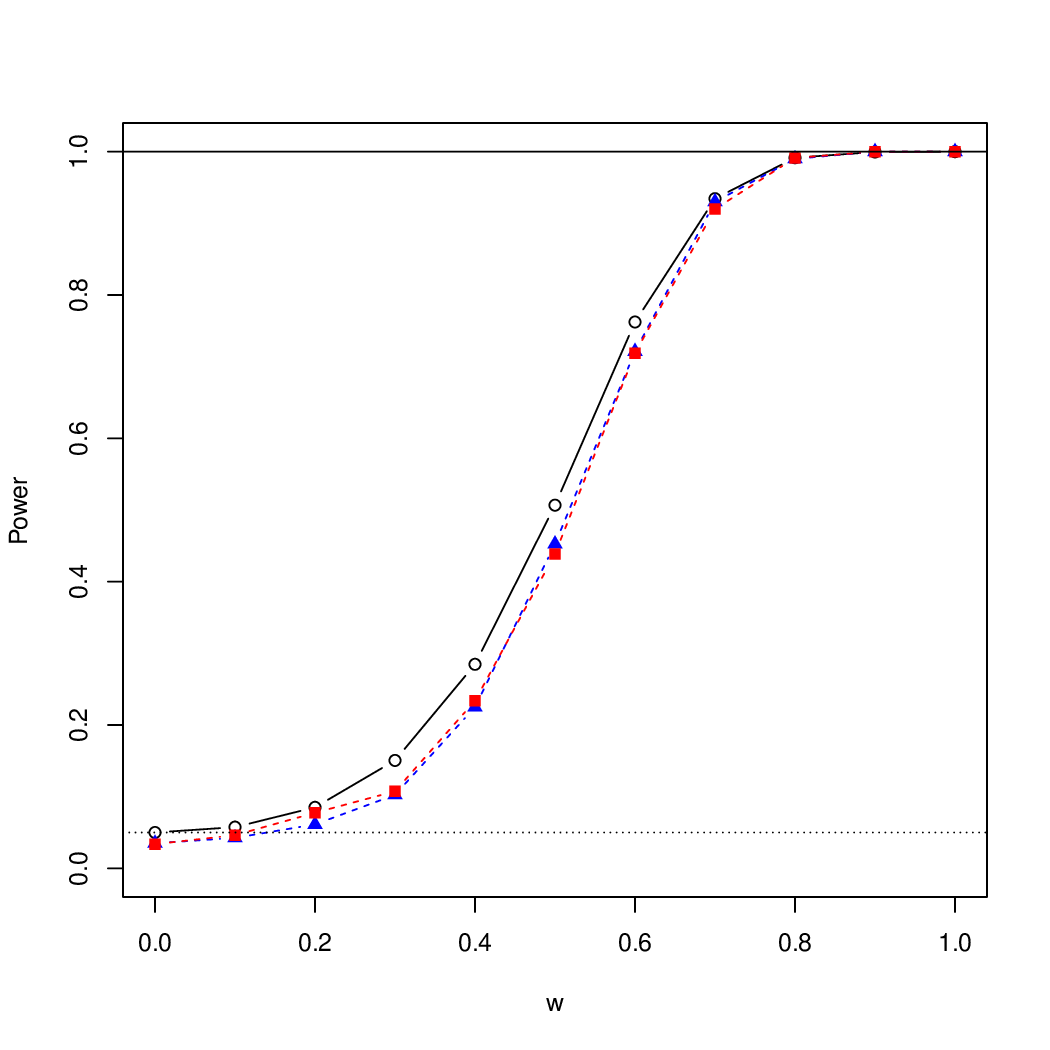}
\caption{Empirical power plot of the many-sample dimensionality test. Case (a) (left) and (b) (right) with two different noises: normal noise (red square) and Gamma noise (blue triangle). Curves with black circles stand for theoretical power functions in (\ref{exm:1}) and (\ref{exm:2}), respectively.}
\label{fig:4.1}
\end{figure}

\section{Real Data Application}\label{sec:data}

We consider a special matrix-valued gene dataset of Mouse Aging Project collected in \cite{mice07}, which measures gene expression levels from $p=46$ genes in $q=9$ distinct tissues for $40$ mice ($n=40$). 
Although $p,q$ and $n$ are not large here, the fact that three numbers are comparable so that the high-dimensional many-sample test considered in this paper is relevant.
In \cite{Touloumis20211309}, the authors  assumed that $\vv{X}$ has a Kronecker product dependence structure 
\begin{equation}\label{eq:kron}
    \vv{X} = \vv{\Sigma}_R^{1/2} \vv{Z} \vv{\Sigma}_C^{1/2},
\end{equation}
where $\vv{\Sigma}_R$ and $\vv{\Sigma}_C$ are semi-definite matrices with respective dimension $p$ and $q$, and $\vv{Z}$ is a $p\times q$ random matrices having i.i.d. entries with zero mean and unit variance. 
Under this assumption, they considered the problem of testing the diagonality hypothesis: ``$\vv{\Sigma}_C$ is diagonal'', and their procedure on the mouse aging data led to the conclusion that $\vv{\Sigma}_C$ is diagonal. 

Notice that, under the assumption (\ref{eq:kron}), the diagonality of $\vv{\Sigma}_C$ is equivalent to column independence of $\vv{X}$. Due to this reason, the proposed test in \cite{Touloumis20211309} essentially tests the hypothesis that the $q=9$ columns from the tissues are independent.  
However, the model assumption (\ref{eq:kron}) might not be reasonable. In fact, if both the assumption (\ref{eq:kron}) and  test conclusion of ``$\vv{\Sigma}_C$ is diagonal'' hold, the covariance matrices of these $q=9$ columns should be mutually proportional. Then, any proportionality test procedure should accept the null hypothesis. However, as shown in \cite{MWY2}, the proportionality test there shows a strong rejection of the null hypothesis, which suggests that the assumption (\ref{eq:kron}) might not be reasonable for this dataset.

To further investigate the dependence relationship of genes among tissues, we now assume that $\vv{X}$ satisfies the following {\em independent component model} 
    \begin{equation}\label{eq:icm}
    {\rm vec}(\vv{X}) = \vv{\Sigma}^{1/2}{\rm vec}(\vv{Z}),     
    \end{equation}
where $\vv{\Sigma}$ is $pq \times pq$ non-negative definite matrix and $\vv{Z}$ is $p\times q$ random matrix having i.i.d. entries with zero mean and unit variance. In addition, to ensure the column independence, we further assume $\vv{\Sigma}$ is block-diagonal, that is,
    \begin{equation}\label{eq:block}
    \vv{\Sigma} = {\rm diag}\{\vv{\Sigma}_{1},\ldots,\vv{\Sigma}_q\},
    \end{equation}
where $\vv{\Sigma}_j = {\rm Cov}(\vv{x}_j)$ is the covariance matrix of the $j$th column $\vv{x}_j$, $j=1,\ldots,q$. 
Note that, as a special case of  model (\ref{eq:icm}), (\ref{eq:kron}) with a diagonal $\vv{\Sigma}_C$ corresponds to the case where  ${\rm span}\{\vv{\Sigma}_j\}$ has dimension $d=1$. The rejection of the proportionality hypothesis in \cite{MWY2} suggests that the dimension of ${\rm span}(\{\vv{\Sigma}_j\})$ must be larger than one. Thus, it is natural for us
 to detect the ``true'' dimension $d$ of ${\rm span}\{\vv{\Sigma}_j\}$. 
 Observe that when $d=d_0$, it holds that
\begin{equation}\label{eq:sumkron}
    \vv{\Sigma} = \sum_{l=1}^{d_0} \vv{A}_{l} \otimes \vv{B}_l,
\end{equation}
where $\{\vv{A}_l\}_{l=1}^{d_0}$ are diagonal $q\times q$ matrices, 
and $\{\vv{B}_l\}_{l=1}^{d_0}$ are linearly independent $p\times p$ matrices. 

Note that the procedure in \cite{Touloumis20211309} essentially tests column independence of $q=9$ tissues, so their conclusion is indeed to accept the independence of the columns. Hence, we accept the column independence assumption and do our dimensionality test sequentially. Specifically, starting from $d=1$, we test the hypothesis
\[
H_0^{(d)}:~~~{\rm dim}({\rm span}(\{\vv{\Sigma}_j\})) = d~~~\hbox{versus}~~~H_1^{(d)}:~~~{\rm dim}({\rm span}(\{\vv{\Sigma}_j\})) \geq d+1
\]
successively, and stop at the first $d_0$ for which $H_0^{(d_0)}$ is accepted. Then, we assert that $d_0$ is the dimension of ${\rm span}\{\vv{\Sigma}_j\}$. Following this principle, we conduct the sequential tests and derive the corresponding $p$-values for different values of $d$ in Table \ref{tab:4.2}.
\begin{table}
\caption{The $p$-values of sequential tests for $H_0^{(d)}$ with $d = 1,2,3$.} 
\label{tab:4.2}
\centering
\begin{tabular}{@{}cccc@{}}
\hline
$d$ & 1 & 
2 & 3
 \\
\hline
$p$-value & $3.03\times 10^{-8}$ & 0.0317 & 0.368
 \\
\hline
\end{tabular}
\end{table}
It can be observed that our sequential tests strongly reject ``$d=1$'', reject ``$d=2$'', and finally accept ``$d=3$''. Hence, we conclude that the true dimension $d_0 =3$. 

To further support our findings, we treat the problem from an estimation viewpoint. We will approximate $\vv{\Sigma}$ by Kronecker products and show approximation by a sum of three Kronecker products is better than by one Kronecker product.  

The problem of approximation by Kroncker products has been extensively studied in \cite{tsiligkaridis2013covariance,van1993approximation}, in which a reshaping operator plays a prominent role. Let $\vv{M}$ be a $(pq)\times (pq)$ matrix partitioned into $q\times q$ matrices $\{\vv{M}_{ij}\}_{i,j=1}^q$ of size $p$, that is,
\[
\vv{M} = \left(\begin{matrix}
    \vv{M}_{11} & \cdots & \vv{M}_{1q} \\
    \vdots & \ddots & \vdots\\
    \vv{M}_{q1} & \cdots & \vv{M}_{qq}
\end{matrix}\right).
\]
Define the reshaping operator $\mathcal{R}: \R^{pq\times pq} \to \R^{q^2 \times p^2}$ by setting the $(i-1)q +j$ row of $\mathcal{R}(\vv{M})$ to be ${\rm vec}(\vv{M}_{ij})^\T$. It is easy to see that for a Kronecker product $\vv{C} = \vv{A}\otimes\vv{B}$, where $\vv{A}$ is a $p\times p$ matrix and $B$ is a $q\times q$ matrix, $\mathcal{R}(\vv{C}) = {\rm vec}(\vv{A}){\rm vec}(\vv{B})^\T$.
Therefore, if $\vv{\Sigma}$ satisfies (\ref{eq:sumkron}), we have
    \[
    \mathcal{R}(\vv{\Sigma}) = \sum_{l=1}^{d_0} {\rm vec}(\vv{A}_l) {\rm vec}(\vv{B}_l)^\T.
    \]
Moreover, the Frobenious norm is invariant under the operator $\mathcal{R}$ so that
    \[
    \|\vv{\Sigma} - \sum_{l=1}^{d_0} \vv{A}_l \otimes \vv{B}_l\|_F = \|\mathcal{R}(\vv{\Sigma}) - \sum_{l=1}^{d_0} {\rm vec}(\vv{A}_l) {\rm vec}(\vv{B}_l)^\T\|_F
    \]
Let $\sigma_j$ be the $j$th largest singular value of $\mathcal{R}(\vv{\Sigma})$ with $\vv{\mu}_j\in\R^{q^2}$ and $\vv{\nu}_j \in \R^{p^2}$ the corresponding left and right singular vectors. Then, the ``best'' rank-$d$ apprixmation of $\mathcal{R}(\vv{\Sigma})$ is
  \[
  \mathcal{R}^{(d)}(\vv{\Sigma}) = \sum_{l=1}^{d} \sigma_l \vv{\mu}_j \vv{\nu}_j^\T.
  \]
Since $\mathcal{R}$ is invertible, $\mathcal{R}^{-1}(\mathcal{R}^{(d)}(\vv{\Sigma}))$, the pre-image of the rank-$d$ approximation,  will be the best approximation of $\vv{\Sigma}$ in terms of a sum of $d$ Kronecker products. 

Hence, we design the following experiment:
   \begin{itemize}
       \item[(a)] Randomly split $\vv{X}_1,\ldots,\vv{X}_{40}$ into two groups with the same size $20$;

       \item[(b)] The first group is a training data set to generate rank-$1$ and rank-$3$ approximations.  In detail, for $\{\vv{X}_{1,1},\ldots,\vv{X}_{1,20}\}$, we consider its sample covariance matrix
       \[
       \hat{\vv{S}}_{1} = \frac{1}{20}\sum_{t=1}^{20} {\rm vec}(\vv{X}_{1t}) {\rm vec}(\vv{X}_{1t})^\T - \bar{\vv{X}_1}\bar{\vv{X}_1}^\T,
       \]
       where $\bar{\vv{X}_1}={\rm vec}(\frac{1}{20} \sum_{t=1}^{20} \vv{X}_{1t})$. And denote $\hat{\vv{R}}_1 = \mathcal{R}(\hat{\vv{S}}_1)$. Then, we apply the singular value decomposition to $\hat{\vv{R}}_1$ to obtain the ''best'' rank-$d$ approximation $\mathcal{R}^{(d)}$ for $d=1$ and $3$ respectively; 

       \item[(c)] The second group is a test data set to evaluate the quantity of the rank-$1$ and rank-$3$ approximations constructed in the previous step. In detail, and similarly as above, we create $\hat{\vv{R}}_2 = \mathcal{R}(\hat{\vv{S}}_2)$.

       \item[(d)] Finally, we compute $RSS_d = \|\mathcal{R}^{(d)} - \hat{\vv{R}}_2\|_F$, the Frobinous norm between $\hat{\vv{R}}_2$ and the best rank-$d$ approximation $\mathcal{R}^{(d)}$, for $d=1,3$.
   \end{itemize}

Following this procedure, we repeat different random splits 1000 times and display the scatter plot and the histogram of values of the difference $RSS_3-RSS_1$ in Figure \ref{fig:4.2}. The results show that $672$ times among 1000, $RSS_3$ is smaller than $RSS_1$, which is reflected in Figure \ref{fig:4.2} that the majority of points fall below zero. In addition, the mean and the standard deviation of $RSS_3-RSS_1$ are $-0.3713$ and $3.305$ respectively. The results support our findings that the rank-$3$ approximation is indeed better than the rank-$1$ one.

\begin{figure}
\centering
\includegraphics[width = 60 mm, height = 60 mm]{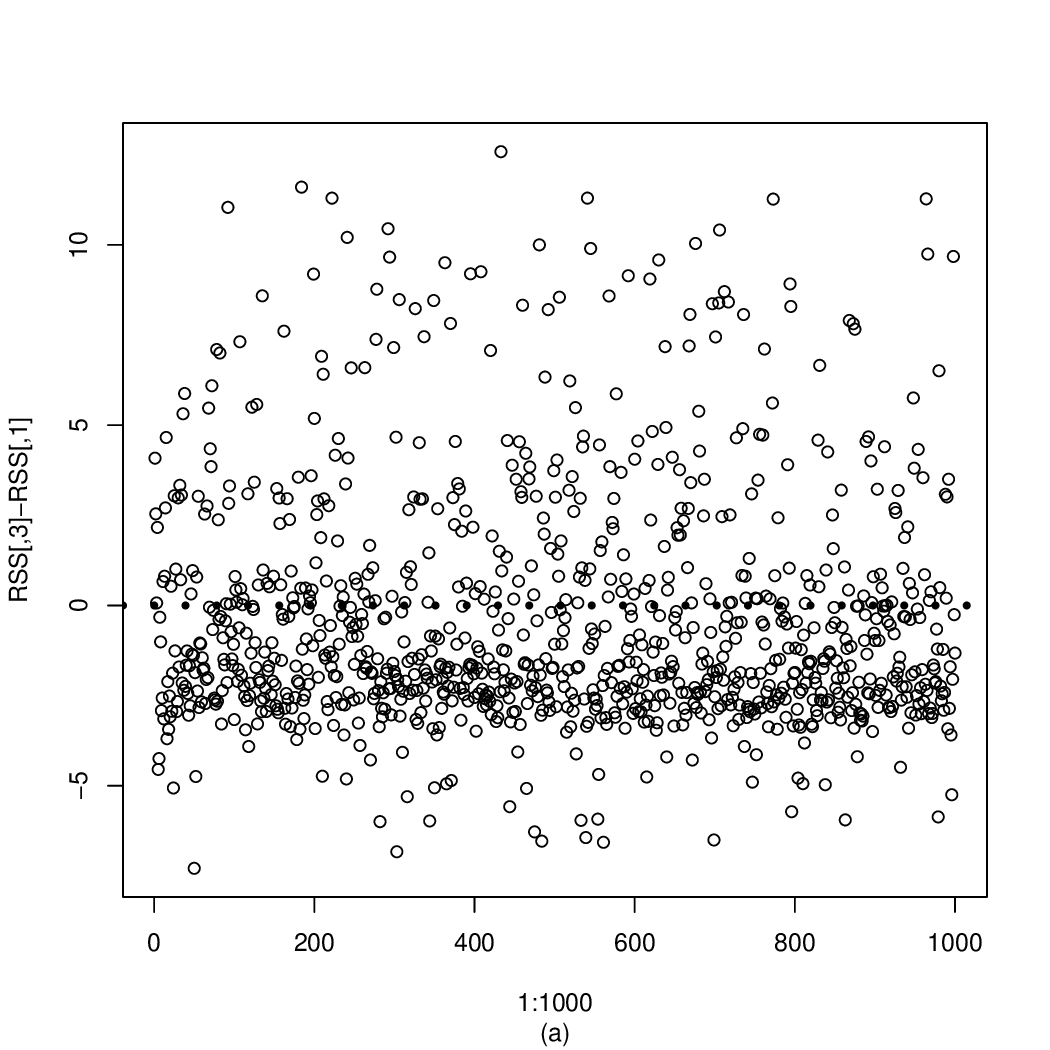}
\includegraphics[width = 60 mm, height = 60 mm]{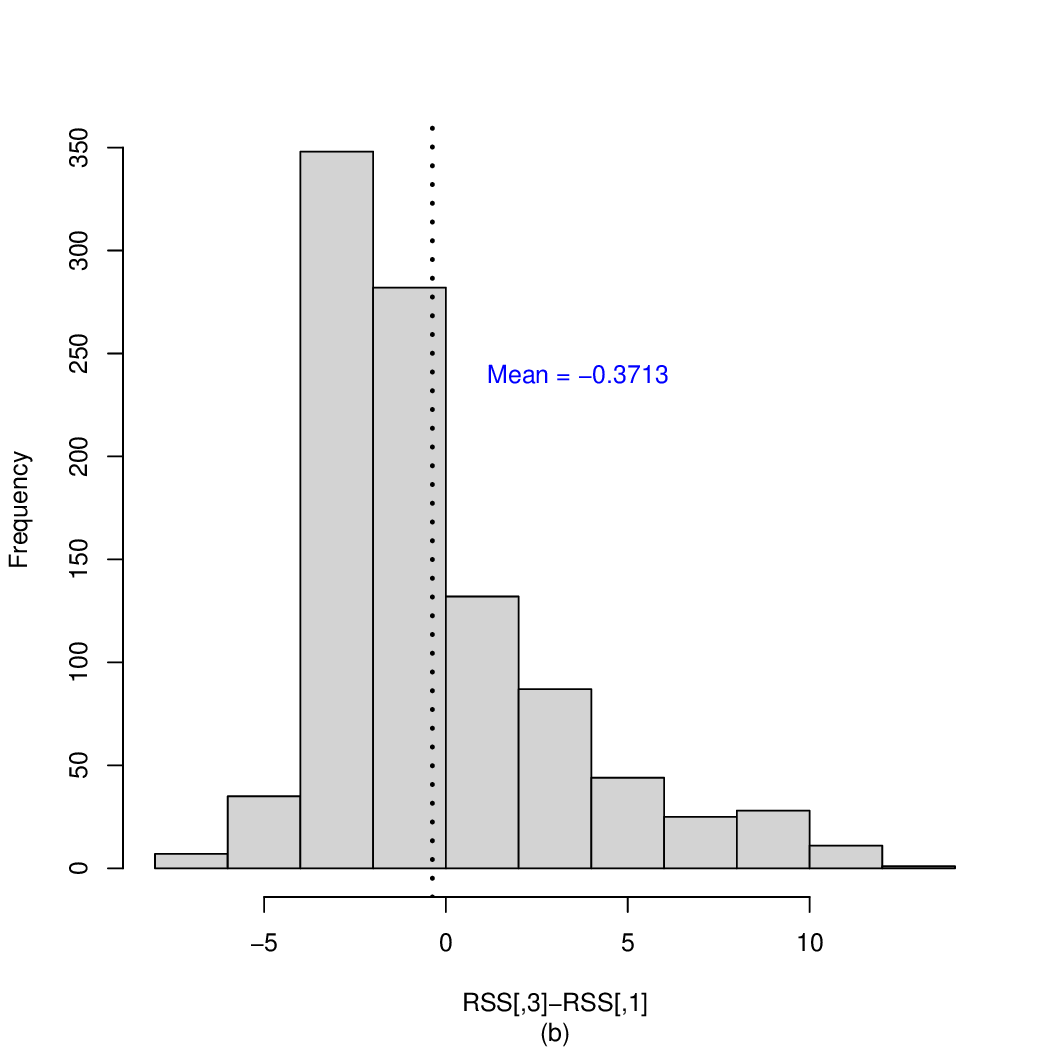}
\caption{The Scatter plot (a) and the histogram (b) of values of the difference $RSS_3-RSS_1$ for 1000 experiments. }
\label{fig:4.2}
\end{figure}

\if UNTRUE
\begin{acks}[Acknowledgments]
The authors wish to express their appreciation to the anonymous reviewers, the Associate Editor, and the Editor for their insightful feedback and constructive suggestions, which have greatly contributed to the improvement of this paper.
\end{acks}
\fi

\begin{funding}
Chen Wang was partially supported by Hong Kong RGC
General Research Fund 17301021 and National Natural Science Foundation of China Grant
72033002. Jianfeng Yao was partially supported by NSFC RFIS Grant No. 12350710179.
\end{funding}

%%%%%%%%%%%%%%%%%%%%%%%%%%%%%%%%%%%%%%%%%%%%%%
%% Supplementary Material, if any, should   %%
%% be provided in {supplement} environment  %%
%% with title and short description.        %%
%%%%%%%%%%%%%%%%%%%%%%%%%%%%%%%%%%%%%%%%%%%%%%

\bibliographystyle{imsart-nameyear} 
\bibliography{manyref,hypothesis}

@ARTICLE{Klein2022,
author={Klein, D. and Pielaszkiewicz, J. and Filipiak, K.},
title={Approximate normality in testing an exchangeable covariance structure under large- and high-dimensional settings},
journal={Journal of Multivariate Analysis},
year={2022},
volume={192},
doi={10.1016/j.jmva.2022.105049},
art_number={105049},
url={https://www.scopus.com/inward/record.uri?eid=2-s2.0-85132345998&doi=10.1016%2fj.jmva.2022.105049&partnerID=40&md5=d63ecd24de08afab7d210053168a5c86},
document_type={Article},
source={Scopus},
}

@ARTICLE{Fan2022,
author={Fan, X. and Lan, W. and Zou, T. and Tsai, C.-L.},
title={Covariance Model with General Linear Structure and Divergent Parameters},
journal={Journal of Business and Economic Statistics},
year={2022},
doi={10.1080/07350015.2022.2142593},
url={https://www.scopus.com/inward/record.uri?eid=2-s2.0-85142884638&doi=10.1080%2f07350015.2022.2142593&partnerID=40&md5=97dc6aa53469233ea998d2c1aafa916e},
document_type={Article},
source={Scopus},
}

@ARTICLE{Yinqiu2021154,
author={Yinqiu, H.E. and Gongjun, X.U. and Chong, W.U. and Pan, A.W.E.I.},
title={Asymptotically independent u-statistics in high-dimensional testing},
journal={Annals of Statistics},
year={2021},
volume={49},
number={1},
pages={154-181},
doi={10.1214/20-AOS1951},
url={https://www.scopus.com/inward/record.uri?eid=2-s2.0-85101026234&doi=10.1214%2f20-AOS1951&partnerID=40&md5=ee09ee7a62220699db2ba282f95aab23},
document_type={Article},
source={Scopus},
}

@ARTICLE{Bodnar20192977,
author={Bodnar, T. and Dette, H. and Parolya, N.},
title={Testing for independence of large dimensional vectors},
journal={Annals of Statistics},
year={2019},
volume={47},
number={5},
pages={2977-3008},
doi={10.1214/18-AOS1771},
url={https://www.scopus.com/inward/record.uri?eid=2-s2.0-85066104870&doi=10.1214%2f18-AOS1771&partnerID=40&md5=221cf21f9e0fc3d35ae885ec6950e276},
document_type={Article},
source={Scopus},
}

@ARTICLE{Xu20173208,
author={Xu, K.},
title={Testing diagonality of high-dimensional covariance matrix under non-normality},
journal={Journal of Statistical Computation and Simulation},
year={2017},
volume={87},
number={16},
pages={3208-3224},
doi={10.1080/00949655.2017.1362405},
url={https://www.scopus.com/inward/record.uri?eid=2-s2.0-85028544841&doi=10.1080%2f00949655.2017.1362405&partnerID=40&md5=129561e139a38051dd07eafb4e112671},
document_type={Article},
source={Scopus},
}

@ARTICLE{Zhong20171185,
author={Zhong, P.-S. and Lan, W. and Song, P.X.K. and Tsai, C.-L.},
title={Tests for covariance structures with high-dimensional repeated measurements},
journal={Annals of Statistics},
year={2017},
volume={45},
number={3},
pages={1185-1213},
doi={10.1214/16-AOS1481},
url={https://www.scopus.com/inward/record.uri?eid=2-s2.0-85020650658&doi=10.1214%2f16-AOS1481&partnerID=40&md5=edb847c8526130c47427c6c73cfb9a6d},
document_type={Article},
source={Scopus},
}

@ARTICLE{Yamada2017305,
author={Yamada, Y. and Hyodo, M. and Nishiyama, T.},
title={Testing block-diagonal covariance structure for high-dimensional data under non-normality},
journal={Journal of Multivariate Analysis},
year={2017},
volume={155},
pages={305-316},
doi={10.1016/j.jmva.2016.12.009},
url={https://www.scopus.com/inward/record.uri?eid=2-s2.0-85010447624&doi=10.1016%2fj.jmva.2016.12.009&partnerID=40&md5=5be9eb0fa16722b74df2b19cb9aea1b5},
document_type={Article},
source={Scopus},
}

@ARTICLE{Butucea2016164,
author={Butucea, C. and Zgheib, R.},
title={Sharp minimax tests for large Toeplitz covariance matrices with repeated observations},
journal={Journal of Multivariate Analysis},
year={2016},
volume={146},
pages={164-176},
doi={10.1016/j.jmva.2015.09.003},
url={https://www.scopus.com/inward/record.uri?eid=2-s2.0-84951805901&doi=10.1016%2fj.jmva.2015.09.003&partnerID=40&md5=a66ac7661c4709619f26eb02f81a497c},
document_type={Article},
source={Scopus},
}

@ARTICLE{Hyodo2015460,
author={Hyodo, M. and Shutoh, N. and Nishiyama, T. and Pavlenko, T.},
title={Testing block-diagonal covariance structure for high-dimensional data},
journal={Statistica Neerlandica},
year={2015},
volume={69},
number={4},
pages={460-482},
doi={10.1111/stan.12068},
url={https://www.scopus.com/inward/record.uri?eid=2-s2.0-84944157510&doi=10.1111%2fstan.12068&partnerID=40&md5=660fd3b36b799c944855f3b315f54527},
document_type={Article},
source={Scopus},
}

@ARTICLE{Lan201576,
author={Lan, W. and Luo, R. and Tsai, C.-L. and Wang, H. and Yang, Y.},
title={Testing the Diagonality of a Large Covariance Matrix in a Regression Setting},
journal={Journal of Business and Economic Statistics},
year={2015},
volume={33},
number={1},
pages={76-86},
doi={10.1080/07350015.2014.923317},
url={https://www.scopus.com/inward/record.uri?eid=2-s2.0-84921927090&doi=10.1080%2f07350015.2014.923317&partnerID=40&md5=a186eb8d0996c2a1042d5c85f14eb722},
document_type={Article},
source={Scopus},
}

@article{Hu20162281,
	author = {Hu, J. and Bai, Z.D.},
	document_type = {Article},
	doi = {10.1007/s11425-016-0131-0},
	journal = {Science China Mathematics},
	number = {12},
	pages = {2281-2300},
	source = {Scopus},
	title = {A review of 20 years of naive tests of significance for high-dimensional mean vectors and covariance matrices},
	url = {https://www.scopus.com/inward/record.uri?eid=2-s2.0-84994351123&doi=10.1007%2fs11425-016-0131-0&partnerID=40&md5=7df2f1c9fb8d86fd0d7df2f527584eda},
	volume = {59},
	year = {2016}}

@article{Cai2017423,
	author = {Cai, T. Tony},
	doi = {10.1146/annurev-statistics-060116-053754},
	journal = {Annual Review of Statistics and Its Application},
	note = {Cited by: 18},
	pages = {423 -- 446},
	publication_stage = {Final},
	source = {Scopus},
	title = {Global testing and large-scale multiple testing for high-dimensional covariance structures},
	type = {Review},
	url = {https://www.scopus.com/inward/record.uri?eid=2-s2.0-85015154229&doi=10.1146%2fannurev-statistics-060116-053754&partnerID=40&md5=626a607be24af05c63eba67c8e936949},
	volume = {4},
	year = {2017}}

@article{Touloumis20211309,
	author = {Touloumis, Anestis and Marioni, John C. and Tavar{\'e}, Simon},
	doi = {10.5705/ss.202018.0268},
	journal = {Statistica Sinica},
	note = {Cited by: 1; All Open Access, Green Open Access},
	number = {3},
	pages = {1309 -- 1329},
	publication_stage = {Final},
	source = {Scopus},
	title = {Hypothesis testing for the covariance matrix in high-dimensional transposable data with kronecker product dependence structure},
	type = {Article},
	url = {https://www.scopus.com/inward/record.uri?eid=2-s2.0-85114178296&doi=10.5705%2fss.202018.0268&partnerID=40&md5=90ee70c9783fb704359baa92eafd4de2},
	volume = {31},
	year = {2021}}

@article{Bai2021701,
	author = {Bai, Zhidong and Hu, Jiang and Wang, Chen and Zhang, Chao},
	doi = {10.1007/s00362-019-01110-1},
	journal = {Statistical Papers},
	note = {Cited by: 1},
	number = {2},
	pages = {701 -- 719},
	publication_stage = {Final},
	source = {Scopus},
	title = {Test on the linear combinations of covariance matrices in high-dimensional data},
	type = {Article},
	url = {https://www.scopus.com/inward/record.uri?eid=2-s2.0-85080984301&doi=10.1007%2fs00362-019-01110-1&partnerID=40&md5=2eceee105039c4186060579b73fba303},
	volume = {62},
	year = {2021}}

@article{Ahmad2017500,
	author = {Ahmad, M. Rauf},
	doi = {10.1111/sjos.12262},
	journal = {Scandinavian Journal of Statistics},
	note = {Cited by: 11},
	number = {2},
	pages = {500 -- 523},
	publication_stage = {Final},
	source = {Scopus},
	title = {Location-invariant Multi-sample U-tests for Covariance Matrices with Large Dimension},
	type = {Article},
	url = {https://www.scopus.com/inward/record.uri?eid=2-s2.0-85019083383&doi=10.1111%2fsjos.12262&partnerID=40&md5=75da5dabf49dff1aa2234aada8787393},
	volume = {44},
	year = {2017}}

@article{Bai20093822,
	author = {Bai, Zhidong and Jiang, Dandan and Yao, Jian-Feng and Zheng, Shurong},
	doi = {10.1214/09-AOS694},
	journal = {Annals of Statistics},
	note = {Cited by: 145; All Open Access, Bronze Open Access, Green Open Access},
	number = {6 B},
	pages = {3822 -- 3840},
	publication_stage = {Final},
	source = {Scopus},
	title = {Corrections to LRT on large-dimensional covariance matrix by RMT},
	type = {Article},
	url = {https://www.scopus.com/inward/record.uri?eid=2-s2.0-73949135723&doi=10.1214%2f09-AOS694&partnerID=40&md5=d590f87ef6fb0f6c35bc3c52da2312da},
	volume = {37},
	year = {2009}}

@article{Li20162973,
	author = {Li, Zeng and Yao, Jianfeng},
	doi = {10.1214/16-EJS1199},
	journal = {Electronic Journal of Statistics},
	note = {Cited by: 6; All Open Access, Gold Open Access, Green Open Access},
	number = {2},
	pages = {2973 -- 3010},
	publication_stage = {Final},
	source = {Scopus},
	title = {Testing the sphericity of a covariance matrix when the dimension is much larger than the sample size},
	type = {Article},
	url = {https://www.scopus.com/inward/record.uri?eid=2-s2.0-84994614117&doi=10.1214%2f16-EJS1199&partnerID=40&md5=a868db7b71f369b960cccb8eddaac96e},
	volume = {10},
	year = {2016}}

@article{Ledoit20021081,
	author = {Ledoit, Olivier and Wolf, Michael},
	doi = {10.1214/aos/1031689018},
	journal = {Annals of Statistics},
	note = {Cited by: 244; All Open Access, Bronze Open Access, Green Open Access},
	number = {4},
	pages = {1081 -- 1102},
	publication_stage = {Final},
	source = {Scopus},
	title = {Some hypothesis tests for the covariance matrix when the dimension is large compared to the sample size},
	type = {Review},
	url = {https://www.scopus.com/inward/record.uri?eid=2-s2.0-0036392431&doi=10.1214%2faos%2f1031689018&partnerID=40&md5=39d441c68d3c5ad0780f76cb88e676ec},
	volume = {30},
	year = {2002}}

@article{Coelho2010711,
	author = {Coelho, Carlos A. and Arnold, Barry C. and Marques, Filipe J.},
	doi = {10.1080/15598608.2010.10412014},
	journal = {Journal of Statistical Theory and Practice},
	note = {Cited by: 16},
	number = {4},
	pages = {711 -- 725},
	publication_stage = {Final},
	source = {Scopus},
	title = {Near-exact distributions for certain likelihood ratio test statistics},
	type = {Article},
	url = {https://www.scopus.com/inward/record.uri?eid=2-s2.0-85008810800&doi=10.1080%2f15598608.2010.10412014&partnerID=40&md5=f723d0830b10126cddb4a46312beffb8},
	volume = {4},
	year = {2010}}

@article{Coelho2012627,
	author = {Coelho, Carlos A. and Marques, Filipe J.},
	doi = {10.1007/s00180-011-0281-1},
	journal = {Computational Statistics},
	note = {Cited by: 24},
	number = {4},
	pages = {627 -- 659},
	publication_stage = {Final},
	source = {Scopus},
	title = {Near-exact distributions for the likelihood ratio test statistic to test equality of several variance-covariance matrices in elliptically contoured distributions},
	type = {Article},
	url = {https://www.scopus.com/inward/record.uri?eid=2-s2.0-84868346841&doi=10.1007%2fs00180-011-0281-1&partnerID=40&md5=eba5b9f4224152b53a15c30121e56eca},
	volume = {27},
	year = {2012}}

@article{Coelho2010583,
	author = {Coelho, Carlos A. and Marques, Filipe J.},
	doi = {10.1016/j.jmva.2009.09.012},
	journal = {Journal of Multivariate Analysis},
	note = {Cited by: 17; All Open Access, Bronze Open Access},
	number = {3},
	pages = {583 -- 593},
	publication_stage = {Final},
	source = {Scopus},
	title = {Near-exact distributions for the independence and sphericity likelihood ratio test statistics},
	type = {Article},
	url = {https://www.scopus.com/inward/record.uri?eid=2-s2.0-72549084495&doi=10.1016%2fj.jmva.2009.09.012&partnerID=40&md5=2a9cbd87a403d7547b1992efc6d0e037},
	volume = {101},
	year = {2010}}

@article{Marques2008726,
	author = {Marques, Filipe J. and Coelho, Carlos A.},
	doi = {10.1016/j.jspi.2007.01.002},
	journal = {Journal of Statistical Planning and Inference},
	note = {Cited by: 18},
	number = {3},
	pages = {726 -- 741},
	publication_stage = {Final},
	source = {Scopus},
	title = {Near-exact distributions for the sphericity likelihood ratio test statistic},
	type = {Article},
	url = {https://www.scopus.com/inward/record.uri?eid=2-s2.0-36049011299&doi=10.1016%2fj.jspi.2007.01.002&partnerID=40&md5=3f565f06c116e328923d8da088a65fd3},
	volume = {138},
	year = {2008}}

@article{Ahmad20152619,
	author = {Ahmad, M. Rauf and Rosen, D. von},
	doi = {10.1080/00949655.2014.948441},
	journal = {Journal of Statistical Computation and Simulation},
	note = {Cited by: 2},
	number = {13},
	pages = {2619 -- 2631},
	publication_stage = {Final},
	source = {Scopus},
	title = {Tests for high-dimensional covariance matrices using the theory of U-statistics},
	type = {Article},
	url = {https://www.scopus.com/inward/record.uri?eid=2-s2.0-84930575733&doi=10.1080%2f00949655.2014.948441&partnerID=40&md5=a331462852f75c043cd932c61006cf7e},
	volume = {85},
	year = {2015}}

@article{Ishii201999,
	author = {Ishii, Aki and Yata, Kazuyoshi and Aoshima, Makoto},
	doi = {10.1016/j.jspi.2019.02.002},
	journal = {Journal of Statistical Planning and Inference},
	note = {Cited by: 11; All Open Access, Green Open Access},
	pages = {99 -- 111},
	publication_stage = {Final},
	source = {Scopus},
	title = {Equality tests of high-dimensional covariance matrices under the strongly spiked eigenvalue model},
	type = {Article},
	url = {https://www.scopus.com/inward/record.uri?eid=2-s2.0-85062038464&doi=10.1016%2fj.jspi.2019.02.002&partnerID=40&md5=a340f6b2fca4d362762405d9a6dcce17},
	volume = {202},
	year = {2019}}

@article{Li2012908,
	author = {Li, Jun and Chen, Song Xi},
	doi = {10.1214/12-AOS993},
	journal = {Annals of Statistics},
	note = {Cited by: 131; All Open Access, Bronze Open Access, Green Open Access},
	number = {2},
	pages = {908 -- 940},
	publication_stage = {Final},
	source = {Scopus},
	title = {Two sample tests for high-dimensional covariance matrices},
	type = {Article},
	url = {https://www.scopus.com/inward/record.uri?eid=2-s2.0-84871930256&doi=10.1214%2f12-AOS993&partnerID=40&md5=461490c07503fd5bca4ec4be3d54ca7c},
	volume = {40},
	year = {2012}}

@article{Qiu20121285,
	author = {Qiu, Yumou and Chen, Song Xi},
	doi = {10.1214/12-AOS1002},
	journal = {Annals of Statistics},
	note = {Cited by: 45; All Open Access, Bronze Open Access, Green Open Access},
	number = {3},
	pages = {1285 -- 1314},
	publication_stage = {Final},
	source = {Scopus},
	title = {Test for bandedness of high-dimensional covariance matrices and bandwidth estimatio},
	type = {Article},
	url = {https://www.scopus.com/inward/record.uri?eid=2-s2.0-84872061647&doi=10.1214%2f12-AOS1002&partnerID=40&md5=388f13b3161692e374e8f97d4a118269},
	volume = {40},
	year = {2012}}

@article{Lai2022,
	author = {Lai, J. and Wang, X. and Zhao, K. and Zheng, S.},
	document_type = {Article},
	doi = {10.1007/s11749-022-00842-x},
	journal = {Test},
	source = {Scopus},
	title = {Block-diagonal test for high-dimensional covariance matrices},
	url = {https://www.scopus.com/inward/record.uri?eid=2-s2.0-85144855542&doi=10.1007%2fs11749-022-00842-x&partnerID=40&md5=52cc97ea5875383d7d8f8eacc2ba3f98},
	year = {2022}}

@article{Cheng2020,
	art_number = {106962},
	author = {Cheng, G. and Liu, B. and Tian, G. and Zheng, S.},
	document_type = {Article},
	doi = {10.1016/j.csda.2020.106962},
	journal = {Computational Statistics and Data Analysis},
	source = {Scopus},
	title = {Testing proportionality of two high-dimensional covariance matrices},
	url = {https://www.scopus.com/inward/record.uri?eid=2-s2.0-85084176860&doi=10.1016%2fj.csda.2020.106962&partnerID=40&md5=506fc1e8290abd267373cae69d9b42b6},
	volume = {150},
	year = {2020}}

@article{Zheng20193300,
	author = {Zheng, S. and Chen, Z. and Cui, H. and Li, R.},
	document_type = {Article},
	doi = {10.1214/18-AOS1779},
	journal = {Annals of Statistics},
	number = {6},
	pages = {3300-3334},
	source = {Scopus},
	title = {Hypothesis testing on linear structures of high-dimensional covariance matrix},
	url = {https://www.scopus.com/inward/record.uri?eid=2-s2.0-85079179857&doi=10.1214%2f18-AOS1779&partnerID=40&md5=3f22fb8454656f66b4cc1543781c57e6},
	volume = {47},
	year = {2019}}

@article{Liu2013293,
	author = {Liu, B. and Xu, L. and Zheng, S. and Tian, G.-L.},
	document_type = {Article},
	doi = {10.1016/j.jmva.2014.06.008},
	journal = {Journal of Multivariate Analysis},
	pages = {293-308},
	source = {Scopus},
	title = {A new test for the proportionality of two large-dimensional covariance matrices},
	url = {https://www.scopus.com/inward/record.uri?eid=2-s2.0-84955718390&doi=10.1016%2fj.jmva.2014.06.008&partnerID=40&md5=376f6a0b111bfc8e89ace024179696f9},
	volume = {131},
	year = {2013}}

@book{vander2000,
	author = {Van der Vaart, Aad W},
	publisher = {Cambridge university press},
	title = {Asymptotic statistics},
	volume = {3},
	year = {2000}}

@article{mice07,
	author = {Zahn, J.M. and Poosala, S. and Owen, A.B. and Ingram, D.K. and Lustig, A. and Carter, A. and Weeraratna, A.T. and Taub, D.D. and Gorospe, M. and Mazan-Mamczarz, K. and Lakatta, E.G. and Boheler, K.R. and Xu, X. and Mattson, M.P. and Falco, G. and Ko, M.S.H. and Schlessinger, D. and Firman, J. and Kummerfeld, S.K. and Wood III, W.H. and Zonderman, A.B. and Kim, S.K. and Becker, K.G.},
	document_type = {Article},
	doi = {10.1371/journal.pgen.0030201},
	journal = {PLoS Genetics},
	number = {11},
	pages = {2326-2337},
	title = {AGEMAP: A gene expression database for aging in mice},
	volume = {3},
	year = {2007}}

@article{tsiligkaridis2013covariance,
  title={Covariance estimation in high dimensions via kronecker product expansions},
  author={Tsiligkaridis, Theodoros and Hero, Alfred O},
  journal={IEEE Transactions on Signal Processing},
  volume={61},
  number={21},
  pages={5347--5360},
  year={2013},
  publisher={IEEE}
}

@article{MWY2,
  title={Many-sample tests for equality and proportionality hypotheses for covariance matrices between groups},
  author={Mei, Tianxing and Wang, Chen and Yao, Jianfeng},
  year={2022}
}

@book{van1993approximation,
  title={Approximation with Kronecker products},
  author={Van Loan, Charles F and Pitsianis, Nikos},
  year={1993},
  publisher={Springer}
}

@book{muirhead2009aspects,
  title={Aspects of multivariate statistical theory},
  author={Muirhead, Robb J},
  year={2009},
  publisher={John Wiley \& Sons}
}

@book{anderson2003,
      title={An Introduction to Multivariate Statistical Analysis},
      author={Anderson, Theodore Wilbur},
      year={2003},
      publisher ={Wiley-Interscience}
      }

@article{100gene,
title = {A global reference for human genetic variation},
author = {1000 Genomes Project Consortium},
journal = {Nature},
year = {2015},
volume={526},
  number={7571},
  pages={68--74},
  doi = {10.1038/nature15393}
}

@article{Chen2010808,
	author = {Chen, S.X. and Qin, Y.-L.},
	doi = {10.1214/09-AOS716},
	journal = {Annals of Statistics},
	number = {2},
	pages = {808-835},
	title = {A two-sample test for high-dimensional data with applications to gene-set testing},
	volume = {38},
	year = {2010}}

\newpage
\appendix

\begin{supplement}
\stitle{Supplement to ``Many-sample tests for the dimensionality hypotheses for large covariance matrices among groups"}
\sdescription{In this supplement, we provide all technical proofs.}
\end{supplement}

\section{Auxiliary Lemmas}\label{app:A}

Let $\vv{x}_1,\ldots,\vv{x}_n$ be an i.i.d. sample from a $p$-dimensional population $\vv{x} =\vv{\Sigma}^{1/2} \vv{z}$,  where $\vv{\Sigma}$ is a semi-definite matrix and $\vv{z}=(z_{j})$ has i.i.d entries with zero mean, unit variance and finite fourth moment $\nu_4$.

For any semi-definite matrix $\vv{\Lambda}$, and $p\times p$ symmetric matrices $\vv{A}_1$ and $\vv{A}_2$, we denote
\begin{equation}
    \mu(\vv{A}_1,\vv{A}_2|\vv{\Lambda}) = \frac{2}{p}{\rm tr}(\vv{\Lambda}\vv{A}_1\vv{\Lambda}\vv{A}_2)+\frac{(\nu_4-3)}{p}{\rm tr}(\mathcal{D}(\vv{\Lambda}^{1/2}\vv{A}_1\vv{\Lambda}^{1/2})\mathcal{D}(\vv{\Lambda}^{1/2}\vv{A}_2\vv{\Lambda}^{1/2}))
\end{equation}
where $\mathcal{D}(\vv{A})$ is a diagonal matrix made by diagonal entries of $\vv{A}$.
It is directly to see that when $\vv{\Lambda}$ is fixed, $\mu(\cdot,\cdot|\vv{\Lambda})$ is a non-negative definite bi-linear function on the product space of symmetric matrices. 
Moreover, when $\vv{\Lambda}$, $\vv{A}_1$ and $\vv{A}_2$ are bounded uniformly in operator norm $\|\cdot\|$ with respect to $p$, so is $\mu(\vv{A}_1,\vv{A}_2|\vv{\Sigma})$.

Let $\vv{S}_n$ be the 
sample covariance matrix, that is, $\vv{S}_n=\frac{1}{n}\sum_{j=1}^n \vv{x}_i\vv{x}_i^\T$. 

In this section, we focus on moments of the following statistics related to $\vv{S}_n$:
\[
\frac{1}{p}{\rm tr}(\vv{S}_n^2)~~~\hbox{and}~~~~\left(\frac{1}{p}{\rm tr}(\vv{S}_n\vv{A}_1)\right)\left(\frac{1}{p}{\rm tr}(\vv{S}_n\vv{A}_2)\right).
\]

\begin{lem}
For any symmetric matrices $\vv{A}_1$ and $\vv{A}_2$, it holds that
\begin{equation}\label{eq:A.1}
\begin{split}
    \E\left[\frac{1}{p}{\rm tr}(\vv{S}_n^2)\right] & = \frac{1}{p}{\rm tr}(\vv{\Sigma}^2) + \frac{p}{n}\left(\frac{1}{p}{\rm tr}(\vv{\Sigma})\right)^2 \\
    & + \frac{1}{n}\left\{\frac{1}{p}{\rm tr}(\vv{\Sigma}^2)+\frac{(\nu_4-3)}{p}{\rm tr}(\mathcal{D}(\vv{\Sigma})^2)\right\}.
\end{split}
\end{equation}

\begin{equation}\label{eq:A.2}
\begin{split}
\mathbb{E}\left[\left(\frac{1}{p}{\rm tr}(\vv{S}_{n}\vv{A}_1)\right)\left(\frac{1}{p}{\rm tr}(\vv{S}_{n}\vv{A}_2)\right)\right]
&=\left(\frac{1}{p}{\rm tr}(\vv{\Sigma}\vv{A}_1)\right)\left(\frac{1}{p}{\rm tr}(\vv{\Sigma}\vv{A}_2)\right)\\
&+\frac{1}{pn}\mu(\vv{A}_1,\vv{A}_2|\vv{\Sigma}).
\end{split}
\end{equation}

\end{lem}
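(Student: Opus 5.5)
We write $\vv{x}_k=\vv{\Sigma}^{1/2}\vv{z}_k$ and expand everything into sums over the i.i.d. sample vectors. The only ingredient is the classical quadratic-form identity: for deterministic symmetric $\vv{B}_1,\vv{B}_2$ and $\vv{z}$ with i.i.d. standardized entries and fourth moment $\nu_4$,
\[
\E[(\vv{z}^\T\vv{B}_1\vv{z})(\vv{z}^\T\vv{B}_2\vv{z})]={\rm tr}(\vv{B}_1){\rm tr}(\vv{B}_2)+2{\rm tr}(\vv{B}_1\vv{B}_2)+(\nu_4-3){\rm tr}(\mathcal{D}(\vv{B}_1)\mathcal{D}(\vv{B}_2)).
\]
This follows by writing the product as $\sum b^{(1)}_{rs}b^{(2)}_{tu}z_rz_sz_tz_u$ and keeping the index patterns $r=s,t=u$; $r=t,s=u$; $r=u,s=t$. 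The all-equal case is counted three times in these patterns, which is corrected by the $\nu_4-3$ diagonal term.

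For \eqref{eq:A.2}, write $\frac1p{\rm tr}(\vv{S}_n\vv{A}_l)=\frac1{pn}\sum_k \vv{z}_k^\T\vv{B}_l\vv{z}_k$ with $\vv{B}_l=\vv{\Sigma}^{1/2}\vv{A}_l\vv{\Sigma}^{1/2}$. The product then becomes a double sum over $(k,k')$.

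Off-diagonal pairs $k\neq k'$ are independent, so each contributes ${\rm tr}(\vv{B}_1){\rm tr}(\vv{B}_2)$, and there are $n(n-1)$ of them. Diagonal pairs $k=k'$ contribute by the identity above, and there are $n$ of them. Dividing by $p^2n^2$, the ${\rm tr}\,{\rm tr}$ parts from both groups combine to $(\frac1p{\rm tr}(\vv{\Sigma}\vv{A}_1))(\frac1p{\rm tr}(\vv{\Sigma}\vv{A}_2))$. What remains is $\frac{1}{p^2n}\{2{\rm tr}(\vv{B}_1\vv{B}_2)+(\nu_4-3){\rm tr}(\mathcal{D}(\vv{B}_1)\mathcal{D}(\vv{B}_2))\}$. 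Since ${\rm tr}(\vv{B}_1\vv{B}_2)={\rm tr}(\vv{\Sigma}\vv{A}_1\vv{\Sigma}\vv{A}_2)$, this remainder is exactly $\frac1{pn}\mu(\vv{A}_1,\vv{A}_2|\vv{\Sigma})$.

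For \eqref{eq:A.1}, write ${\rm tr}(\vv{S}_n^2)=\frac1{n^2}\sum_{k,k'}(\vv{x}_k^\T\vv{x}_{k'})^2$.

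For $k\ne k'$, conditioning on $\vv{x}_{k'}$ gives $\E(\vv{x}_k^\T\vv{x}_{k'})^2={\rm tr}(\vv{\Sigma}^2)$, and there are $n(n-1)$ such pairs. For $k=k'$, the identity with $\vv{B}_1=\vv{B}_2=\vv{\Sigma}$ gives $\E(\vv{z}^\T\vv{\Sigma}\vv{z})^2=({\rm tr}\vv{\Sigma})^2+2{\rm tr}(\vv{\Sigma}^2)+(\nu_4-3){\rm tr}(\mathcal{D}(\vv{\Sigma})^2)$.

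Summing and dividing by $pn^2$ gives
\[
\tfrac{n-1}{n}\tfrac1p{\rm tr}(\vv{\Sigma}^2)+\tfrac1{pn}\{({\rm tr}\vv{\Sigma})^2+2{\rm tr}\vv{\Sigma}^2+(\nu_4-3){\rm tr}\mathcal{D}(\vv{\Sigma})^2\}.
\]
Regrouping, $-\frac1n\frac1p{\rm tr}\vv{\Sigma}^2+\frac2n\frac1p{\rm tr}\vv{\Sigma}^2=\frac1n\frac1p{\rm tr}\vv{\Sigma}^2$, and $\frac{1}{pn}({\rm tr}\vv{\Sigma})^2=\frac pn(\frac1p{\rm tr}\vv{\Sigma})^2$. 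This is exactly \eqref{eq:A.1}.

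There is no real obstacle here. The only care needed is in the bookkeeping of the fourth-moment index patterns, in particular not double counting the all-equal index case.
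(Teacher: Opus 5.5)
Your proposal is correct. Both identities hold exactly at finite $n$ and $p$, and each step of your bookkeeping checks out:
\begin{itemize}
\item the three pairings in $\E[z_rz_sz_tz_u]$, together with the $(\nu_4-3)$ correction on the all-equal diagonal;
\item the split of each double sum into $n(n-1)$ independent off-diagonal pairs and $n$ diagonal terms;
\item the final regrouping, including the identification of the leftover term in \eqref{eq:A.2} with $\frac{1}{pn}\mu(\vv{A}_1,\vv{A}_2|\vv{\Sigma})$.
\end{itemize}
The paper states this lemma without proof; in the main text it attributes the first-moment formula to the calculations in \cite{Cheng2020,MWY2}. Your direct expansion via the quadratic-form fourth-moment identity supplies the standard argument the paper takes for granted, and it uses only finite fourth moments rather than the eighth-moment part of Assumption~\ref{assm:2.1}.
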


\begin{lem}
Let $\vv{B}_1,\ldots,\vv{B}_4$ be $p\times p$ symmetric matrices. Then,
\begin{equation*}
\begin{split}
{\rm Var}\left({\rm tr}(\vv{S}_{n}^2)\right)&=\frac{1}{n^4}\bigg\{ 4n{\rm tr}(\vv{\Sigma})^2 +8n^2 {\rm tr}(\vv{\Sigma})\mu(\vv{\Sigma},\vv{I}_p|\vv{\Sigma})\\
&~~~~~~~~~+4n^2{\rm tr}(\vv{\Sigma}^2)^2+4n^3\mu(\vv{\Sigma},\vv{\Sigma}|\vv{\Sigma})\bigg\} + o(1).
\end{split}
\end{equation*}
\begin{equation}\label{eq:A.4}
\begin{split}
&{\rm Cov}\left(\frac{1}{p}{\rm tr}(\vv{S}_{n}\vv{B}_1){\rm tr}(\vv{S}_n\vv{B}_2),\frac{1}{p}{\rm tr}(\vv{S}_{n}\vv{B}_3){\rm tr}(\vv{S}_n\vv{B}_4)\right)\\
=&\frac{1}{pn}\bigg\{{\rm tr}(\vv{\Sigma}\vv{B}_2){\rm tr}(\vv{\Sigma}\vv{B}_4)\mu(\vv{B}_1,\vv{B}_3|\vv{\Sigma})+{\rm tr}(\vv{\Sigma}\vv{B}_2){\rm tr}(\vv{\Sigma}\vv{B}_3)\mu(\vv{B}_1,\vv{B}_4|\vv{\Sigma})\\
&~~~~~+{\rm tr}(\vv{\Sigma}\vv{B}_1){\rm tr}(\vv{\Sigma}\vv{B}_4)\mu(\vv{B}_2,\vv{B}_3|\vv{\Sigma})+{\rm tr}(\vv{\Sigma}\vv{B}_1){\rm tr}(\vv{\Sigma}\vv{B}_3)\mu(\vv{B}_2,\vv{B}_4|\vv{\Sigma})\bigg\}+o(1).
\end{split}
\end{equation}
\begin{equation*}
\begin{split}
&{\rm Cov}\left({\rm tr}(\vv{S}_{n}^2),\frac{1}{p}{\rm tr}(\vv{S}_{n}\vv{B}_1){\rm tr}(\vv{S}_{n}\vv{B}_2)\right)\\
=&\frac{2}{n^4}\bigg\{n^2\cdot{\rm tr}(\vv{\Sigma}\vv{B}_2){\rm tr}(\vv{\Sigma})\mu(\vv{B}_1,\vv{I}_p|\vv{\Sigma})+n^2\cdot{\rm tr}(\vv{\Sigma}\vv{B}_1){\rm tr}(\vv{\Sigma})\mu(\vv{B}_2,\vv{I}_p|\vv{\Sigma})\\
&~~+n^3\cdot {\rm tr}(\vv{\Sigma}\vv{B}_2)\mu(\vv{B}_1,\vv{I}_p|\vv{\Sigma})+n^3\cdot {\rm tr}(\vv{\Sigma}\vv{B}_1)\mu(\vv{B}_2,\vv{I}_p|\vv{\Sigma})\bigg\}+o(1).
\end{split}
\end{equation*}
Consequently, it also holds that
\begin{equation}\label{eq:A.5}
\begin{split}
{\rm Var}\left({\rm tr}(\vv{S}_{n}^2)-c_{p}\frac{1}{p}{\rm tr}(\vv{S}_{n})^2\right)=4c_{p}^2\left(\frac{1}{p}{\rm tr}(\vv{\Sigma}^2)\right)^2+4c_{p}\mu(\vv{\Sigma},\vv{\Sigma}|\vv{\Sigma})
\end{split}
\end{equation}
\begin{equation}\label{eq:A.6}
\begin{split}
&{\rm Cov}\left({\rm tr}(\vv{S}_{n}^2)-c_{p}\frac{1}{p}{\rm tr}(\vv{S}_{n})^2,\frac{1}{p}{\rm tr}(\vv{S}_{n}\vv{B}_1){\rm tr}(\vv{S}_{n}\vv{B}_2)\right)\\
&=\frac{2}{n}\bigg\{{\rm tr}(\vv{\Sigma}\vv{B}_2)\mu(\vv{\Sigma},\vv{B}_1|\vv{\Sigma})+{\rm tr}(\vv{\Sigma}\vv{B}_1)\mu(\vv{\Sigma},\vv{B}_2|\vv{\Sigma})\bigg\}+o(1)\\
\end{split}
\end{equation}
where $c_p = p/n$.
\end{lem}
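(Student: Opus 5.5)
We prove the lemma by reducing every statement to moments of quadratic forms in the i.i.d. vector $\vv{z}$, combined with the already-established first-moment identities (\ref{eq:A.1})--(\ref{eq:A.2}).

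\emph{Step 1: reduction.} Write $\vv{S}_n=\frac1n\sum_{k}\vv{\Sigma}^{1/2}\vv{z}_k\vv{z}_k^\T\vv{\Sigma}^{1/2}$. Then ${\rm tr}(\vv{S}_n\vv{B})=\frac1n\sum_k \vv{z}_k^\T\tilde{\vv{B}}\vv{z}_k$ with $\tilde{\vv{B}}=\vv{\Sigma}^{1/2}\vv{B}\vv{\Sigma}^{1/2}$. Similarly,
\[
{\rm tr}(\vv{S}_n^2)=\frac{1}{n^2}\sum_{k,l}(\vv{z}_k^\T\vv{\Sigma}\vv{z}_l)^2 .
\]
We split this sum into its diagonal part $k=l$ and its off-diagonal part $k\neq l$.

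\emph{Step 2: the basic covariance identity.} For a single vector we use the standard formula
\[
{\rm Cov}(\vv{z}^\T\vv{A}_1\vv{z},\vv{z}^\T\vv{A}_2\vv{z})=2{\rm tr}(\vv{A}_1\vv{A}_2)+(\nu_4-3){\rm tr}(\mathcal{D}(\vv{A}_1)\mathcal{D}(\vv{A}_2)) .
\]
With $\vv{A}_m=\tilde{\vv{B}}_m$, the right-hand side equals $p\,\mu(\vv{B}_1,\vv{B}_2|\vv{\Sigma})$.

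\emph{Step 3: the covariance (\ref{eq:A.4}) by the delta method.} Write each trace as its mean plus a centered fluctuation,
\[
{\rm tr}(\vv{S}_n\vv{B}_m)={\rm tr}(\vv{\Sigma}\vv{B}_m)+\xi_m ,
\]
where the fluctuations $\xi_m$ have covariance ${\rm Cov}(\xi_a,\xi_b)=\frac{p}{n}\mu(\vv{B}_a,\vv{B}_b|\vv{\Sigma})$ by Step 2. Expanding the product of two traces keeps only the linear terms ${\rm tr}(\vv{\Sigma}\vv{B}_2)\xi_1+{\rm tr}(\vv{\Sigma}\vv{B}_1)\xi_2$. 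Bilinearity of the covariance then produces exactly the four displayed terms, which carry the prefactor $\frac{1}{pn}$ up to the stated normalisation.

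The neglected part is the product $\xi_1\xi_2$. Its covariance with the other factor involves third and fourth cumulants of sums of independent quadratic forms. These are bounded using the finite eighth moment and the operator-norm bounds, and they are smaller by a factor $1/n$ or $1/p$. This gives the $o(1)$ error.

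\emph{Step 4: the statements involving ${\rm tr}(\vv{S}_n^2)$.} We proceed with the same decomposition as in Step 1. The off-diagonal terms form a degenerate-plus-linear U-statistic in $\{\vv{z}_k\}$. Its Hoeffding projection onto a single $\vv{z}_k$ is
\[
\frac{2(n-1)}{n^2}\,\vv{z}_k^\T\vv{\Sigma}^{1/2}\vv{\Sigma}\vv{\Sigma}^{1/2}\vv{z}_k .
\]
The degenerate second-order part has variance of order ${\rm tr}(\vv{\Sigma}^2)^2/n^2$. The diagonal terms $\frac{1}{n^2}\sum_k(\vv{z}_k^\T\vv{\Sigma}\vv{z}_k)^2$ fluctuate through their projection $\approx\frac{2{\rm tr}(\vv{\Sigma})}{n^2}\vv{z}_k^\T\vv{\Sigma}\vv{z}_k$.

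Collecting the variances and cross-covariances of these pieces through Step 2 gives the $n$, $n^2$ and $n^3$ terms in the variance formula. The same method gives the covariance with ${\rm tr}(\vv{S}_n\vv{B}_1){\rm tr}(\vv{S}_n\vv{B}_2)$. In that case only the linear projections correlate with $\xi_1$ and $\xi_2$, which yields the $\mu(\vv{B}_m,\vv{I}_p|\vv{\Sigma})$ terms.

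\emph{Step 5: the consequences (\ref{eq:A.5})--(\ref{eq:A.6}).} These follow by combining the previous formulas with $\vv{B}_1=\vv{B}_2=\vv{I}_p$ and the weight $c_p$. The bias correction $c_p\frac1p{\rm tr}(\vv{S}_n)^2$ is chosen precisely so that its projection cancels the ${\rm tr}(\vv{\Sigma})$-terms coming from the diagonal part. After this cancellation only $4c_p^2(\frac1p{\rm tr}\vv{\Sigma}^2)^2$ (from the degenerate part) and $4c_p\mu(\vv{\Sigma},\vv{\Sigma}|\vv{\Sigma})$ (from the linear projection) survive.

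\emph{Main obstacle.} The hard part is the bookkeeping in Step 4, namely identifying which mixed moments $\E[(\vv{z}_k^\T\vv{\Sigma}\vv{z}_l)^2(\vv{z}_{k'}^\T\vv{\Sigma}\vv{z}_{l'})^2]$ contribute at leading order. I would organise this by the number of distinct indices among $k,l,k',l'$. I would then check the cancellation of the ${\rm tr}(\vv{\Sigma})$ terms explicitly, since a sign or factor error there destroys (\ref{eq:A.5}).
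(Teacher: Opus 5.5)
Your reduction to quadratic forms, the delta-method argument in Step~3, and the direct cancellation argument in Step~5 are sound. Together they establish (\ref{eq:A.4})--(\ref{eq:A.6}), which are the parts of the lemma used later (e.g.\ in (\ref{eq:vr})).

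The gap is in Step~4. There you assert, without computing, that collecting the pieces gives ``the $n$, $n^2$ and $n^3$ terms'' of the first display and ``the $\mu(\vv{B}_m,\vv{I}_p|\vv{\Sigma})$ terms'' of the third. It does not, because those two displays are misprinted. The paper states the lemma without proof, so nothing there catches this. Carrying out your own plan gives
\begin{align*}
{\rm Var}\big({\rm tr}(\vv{S}_n^2)\big) &= \frac{4p\,({\rm tr}\,\vv{\Sigma})^2}{n^3}\,\mu(\vv{I}_p,\vv{I}_p|\vv{\Sigma}) + \frac{8p\,{\rm tr}\,\vv{\Sigma}}{n^2}\,\mu(\vv{\Sigma},\vv{I}_p|\vv{\Sigma}) \\
&\quad + \frac{4({\rm tr}\,\vv{\Sigma}^2)^2}{n^2} + \frac{4p}{n}\,\mu(\vv{\Sigma},\vv{\Sigma}|\vv{\Sigma}) + o(1).
\end{align*}
The four terms come from the diagonal projection, its cross term with the off-diagonal projection, the degenerate part, and the off-diagonal projection, respectively.

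\begin{itemize}
\item The printed first display lacks the factors $p\,\mu(\vv{I}_p,\vv{I}_p|\vv{\Sigma})$, $p$ and $p$ in its first, second and fourth terms, which makes them $O(1/n)$. It therefore asserts ${\rm Var}({\rm tr}(\vv{S}_n^2))=4c_p^2(\frac1p{\rm tr}\,\vv{\Sigma}^2)^2+o(1)$. For Gaussian data with $\vv{\Sigma}=\vv{I}_p$, the Wishart moments instead give $8c_p+20c_p^2+8c_p^3+o(1)$. Its first term is not even homogeneous of degree $4$ in $\vv{\Sigma}$.
\item In the third display, the off-diagonal projection $\frac{2(n-1)}{n^2}\vv{z}_k^\T\vv{\Sigma}^2\vv{z}_k$ has covariance $p\,\mu(\vv{\Sigma},\vv{B}_m|\vv{\Sigma})$ with $\vv{z}_k^\T\tilde{\vv{B}}_m\vv{z}_k$. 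So the $n^3$ terms must carry $\mu(\vv{B}_m,\vv{\Sigma}|\vv{\Sigma})$, not $\mu(\vv{B}_m,\vv{I}_p|\vv{\Sigma})$. Only the diagonal projection produces $\mu(\vv{B}_m,\vv{I}_p|\vv{\Sigma})$, and it does so in the $n^2$ terms. The printed $n^3$ terms have degree $3$ in $\vv{\Sigma}$ instead of $4$, and only the corrected form is consistent with (\ref{eq:A.6}).
\end{itemize}

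Hence the first and third displays cannot be proved as stated. Your write-up should derive the corrected formulas and flag the discrepancy rather than claim agreement.

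This also affects the first route in your Step~5. Substituting $\vv{B}_1=\vv{B}_2=\vv{I}_p$ into ``the previous formulas'' yields (\ref{eq:A.5})--(\ref{eq:A.6}) only with the corrected versions. With those, $c_p^2{\rm Var}(\frac1p({\rm tr}\,\vv{S}_n)^2)-2c_p{\rm Cov}({\rm tr}(\vv{S}_n^2),\frac1p({\rm tr}\,\vv{S}_n)^2)$ removes exactly the $\mu(\vv{I}_p,\vv{I}_p|\vv{\Sigma})$ and $\mu(\vv{\Sigma},\vv{I}_p|\vv{\Sigma})$ terms; with the printed ones it does not. Your second argument avoids this and is the cleaner route: the projection of $\frac1n({\rm tr}\,\vv{S}_n)^2$ coincides with that of the diagonal part.

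Finally, for (\ref{eq:A.5}) you need the constant of the degenerate part, not only its order. The degenerate kernel has variance $2({\rm tr}\,\vv{\Sigma}^2)^2+O(p)$, which follows from $\E[(\vv{z}_k^\T\vv{\Sigma}\vv{z}_l)^4\mid\vv{z}_l]=3(\vv{z}_l^\T\vv{\Sigma}^2\vv{z}_l)^2+(\nu_4-3)\sum_j(\vv{\Sigma}\vv{z}_l)_j^4$.
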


\section{Proof of Lemma \ref{lem:detG}}\label{app:B}

This subsection is devoted to prove Lemma \ref{lem:detG}. Before that, we show several important lemmas that are important for our later discussion.

\begin{lem}\label{lem:B}
Suppose that Assumptions \ref{assm:2.1}---\ref{assm:2.4} hold.
Then, for any positive integer $m$ and any distinct combinations $(j_1,\ldots,j_m)\in \{1,\ldots,q\}$, it holds that
\begin{equation}\label{eq:B.2}
\mathbb{E}\left[\hat{G}_{j_1,j_2}\cdots\hat{G}_{j_{m-1},j_m}\cdot\hat{G}_{j_m,j_1}\right]=G_{j_1,j_2}\cdots G_{l_{m-1},l_m}\cdot G_{l_m,l_1}+O(\frac{1}{p^2}),
\end{equation}
where the reminder $O(p^{-2})$ can be uniformly bounded by $C/p^2$ for certain positive constant $C$ dependent only on $m$.  
\end{lem}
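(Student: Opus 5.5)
The plan is to exploit the cyclic structure of the product: with distinct indices $j_1,\ldots,j_m$, every index $j_l$ appears in exactly two of the factors, namely $\hat{G}_{j_{l-1},j_l}$ and $\hat{G}_{j_l,j_{l+1}}$ (indices taken modulo $m$). Each off-diagonal entry is $\hat G_{ab}=\frac1p{\rm tr}(\vv{S}_a\vv{S}_b)$. So the product is a multilinear expression in the independent matrices $\vv{S}_{j_1},\ldots,\vv{S}_{j_m}$, quadratic in each. The case $m=1$ is trivial by the unbiasedness lemma, and I would also treat $m=2$ separately.

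The first step is to integrate out one sample at a time, conditioning on the others. Fix $l$ and write the product as
\[
\Big(\tfrac1p{\rm tr}(\vv{S}_{j_l}\vv{A}_1)\Big)\Big(\tfrac1p{\rm tr}(\vv{S}_{j_l}\vv{A}_2)\Big)\cdot(\text{rest}),
\]
with $\vv{A}_1=\vv{S}_{j_{l-1}}$, $\vv{A}_2=\vv{S}_{j_{l+1}}$, and the rest independent of $\vv{S}_{j_l}$. By (\ref{eq:A.2}), the conditional expectation replaces both occurrences of $\vv{S}_{j_l}$ by $\vv{\Sigma}_{j_l}$, plus an error $\frac{1}{pn_{j_l}}\mu(\vv{A}_1,\vv{A}_2|\vv{\Sigma}_{j_l})$. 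This error is of order $p^{-2}$ times a polynomial in the remaining sample covariance matrices.

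Iterating this over $l=1,\ldots,m$ turns every $\vv{S}$ into its $\vv{\Sigma}$, which gives the main term $G_{j_1j_2}\cdots G_{j_mj_1}$. It also produces a sum of at most $2^m$ error terms. Each error term carries at least one factor $\frac{1}{pn}=O(p^{-2})$ by Assumption \ref{assm:2.3}. What multiplies that factor is a product of normalized traces $\frac1p{\rm tr}(\cdot)$ of products of $\vv{\Sigma}$'s and $\vv{S}$'s, possibly with inner $\mu(\cdot,\cdot|\cdot)$ terms that again have the form $\frac1p{\rm tr}$ of such words.

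The key point, and the main obstacle, is to bound these residual terms uniformly. The bound must depend only on $m$, not on $p$, $q$, or the particular indices. I would proceed by induction on the number of remaining random matrices, continuing to integrate them out. For the remainders, I would not compute exactly. Instead I would use $|\frac1p{\rm tr}(\vv{W})|\le\|\vv{W}\|$, which is legitimate for the $\mu$ expressions, since $\mu(\vv{A}_1,\vv{A}_2|\vv{\Lambda})\le(2+|\nu_4-3|)\|\vv{\Lambda}\|^2\|\vv{A}_1\|\|\vv{A}_2\|$. Combined with Hölder over independent samples, this gives
\[
\text{remainder}\le C\,p^{-2}\prod_l\E\|\vv{S}_{j_l}\|^{2}.
\]
The operator norm moments $\E\|\vv{S}_i\|^2$ are bounded uniformly under Assumptions \ref{assm:2.1}--\ref{assm:2.3}. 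The reason is that $\|\vv{S}_i\|\le\|\vv{\Sigma}_i\|\,\|\vv{Z}\vv{Z}^\T/n_i\|$ and the largest eigenvalue of $\vv{Z}\vv{Z}^\T/n_i$ has bounded moments under a finite eighth moment. Cruder alternatively, one can use $\|\vv{S}_i\|\le\frac1p{\rm tr}(\vv{S}_i)\cdot p$ only where harmless; if that is problematic, I would replace the operator norm by Frobenius-type bounds $\frac1p{\rm tr}(\vv{S}_i^2)$, whose moments are bounded by the variance formulas of the preceding lemma.

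Since each remainder has a constant depending only on $m$, $C$, $c_0$, $C_0$, and $\nu_4$ (and at most eighth moments), the total is $C(m)/p^2$, uniformly in the choice of indices.
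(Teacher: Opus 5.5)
Your proposal is correct and follows the paper's proof. Both integrate out one sample at a time via (\ref{eq:A.2}): this replaces $\vv{S}_{j_l}$ by $\vv{\Sigma}_{j_l}$ in its two adjacent factors at the cost of a $\frac{1}{pn_{j_l}}\mu(\cdot,\cdot|\vv{\Sigma}_{j_l})$ remainder, and the step is then iterated.

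Your control of the remainder is more careful than the paper's. The paper bounds $\frac1p{\rm tr}(\vv{S}_k\vv{S}_j)\le\|\vv{S}_k\|\|\vv{S}_j\|\le C'$ as if sample-covariance norms were deterministically bounded, which they are not. Your Cauchy--Schwarz fallback avoids this: each remaining $\vv{S}_a$ contributes a single factor $\frac1p{\rm tr}(\vv{S}_a^2)$, whose mean is bounded by (\ref{eq:A.1}), and independence factorizes the expectation. That makes the remainder bound rigorous and uniform using only fourth moments.
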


\begin{proof}
For $m=1$, according to discussion in Section \ref{ssec:sgm}, we have $\mathbb{E}[\hat{G}_{j_1,j_1}]=G_{j_1,j_1}+O(\frac{1}{p^2})$ so that (\ref{eq:B.2}) holds.;

For $m\geq 2$, indices $\{i_1,\ldots,i_k\}$ must be distinct. Then, we prove (\ref{eq:B.2}) by taking conditional expectation with respect to $\{\mathcal{F}_{k}\}$ repeatedly, where $\mathcal{F}_{k}=\sigma(\vv{X}_{j_1,p},\ldots,\vv{X}_{j_{k-1},p})$. By applying (\ref{eq:A.2}), we have
\begin{align*}
&\E\left[\hat{G}_{j_1,j_2}\cdots\hat{G}_{j_{m-1},j_m}\cdot\hat{G}_{j_m,j_1}|\mathcal{F}_{m-1}\right]\\
=&\hat{G}_{j_1,j_2}\cdots\hat{G}_{j_{m-2},j_{m-1}}\left[\left(\frac{1}{p}{\rm tr}(\vv{S}_{j_{m-1},p}\vv{\Sigma}_{j_m})\right)\left(\frac{1}{p}{\rm tr}(\vv{S}_{j_{1},p}\vv{\Sigma}_{j_m})\right)+O(\frac{1}{pn_m})\right],
\end{align*}
where the reminder can be uniformly bounded by $C/p^2$ for certain positive constant $C$ independent of $\{p,q,n_j\}$. Moreover, by Assumption \ref{assm:2.2}, we have
\[
\sup_{(k,j)}\frac{1}{p}{\rm tr}(\vv{S}_{l_k,p}\vv{S}_{l_j,p})\leq\sup_{(k,j)}\|\vv{S}_{l_k,p}\|\|\vv{S}_{l_j,p}\|\leq C'.
\]
Then, the product
\[
|\hat{G}_{l_1,l_2}\cdots\hat{G}_{l_{m-1},j_{m-1}}||O(\frac{1}{pn_m})|\leq \frac{C'C}{pn_m} = O(\frac{1}{p^2})
\]
where the reminder can still be uniformly bounded. Thus, we conclude that
\begin{align*}
&\E\left[\hat{G}_{j_1,j_2}\cdots\hat{G}_{j_{m-1},j_m}\cdot\hat{G}_{j_m,j_1}|\mathcal{F}_{m-1}\right]\\
=&\hat{G}_{j_1,j_2}\cdots\hat{G}_{j_{m-2},j_{m-1}}\left(\frac{1}{p}{\rm tr}(\vv{S}_{j_{m-1},p}\vv{\Sigma}_{j_m})\right)\left(\frac{1}{p}{\rm tr}(\vv{S}_{j_{1},p}\vv{\Sigma}_{j_m}\right)+O(\frac{1}{p^2}),
\end{align*}
In other words, taking conditional expectation with respect to $\mathcal{F}_{m-1}$ of the cyclic product is asymptotically equivalent to replace $\vv{S}_{j_{m},p}$ by $\vv{\Sigma}_{j_{m},p}$ in $\hat{G}_{j_{m-1},j_m}$ and $\hat{G}_{j_m,j_1}$.

Next, we take conditional expectation with respect to $\{\mathcal{F}_k\}$ sequentially and repeat using (\ref{eq:A.2}). Consequently, the operation sequentially replaces $\vv{S}_{j_k}$'s by $\vv{\Sigma}_{j_k}$'s in the expression, and the remainder is also uniformly bounded in order $O(1/p^2)$ due to Assumptions \ref{assm:2.2}---\ref{assm:2.4}. The procedure finally leads to (\ref{eq:B.2}), which completes the proof. 
\end{proof}
\begin{corollary}\label{cor:B1}
Suppose that Assumptions \ref{assm:2.1}---\ref{assm:2.4} hold.
Let $m$ be a positive integer and $(j_1,\ldots,j_m)\in \{1,\ldots,q\}$ be a combination of distinct indices. Then, any integer $k$ less than $m$ and $(l_1,\ldots,l_k)\subset(j_1,\ldots,j_m)$, we have
\begin{equation}\label{eq:B.3}
\mathbb{E}\left[\hat{G}_{j_1,j_2}\cdots\hat{G}_{j_{m-1},j_m}\cdot\hat{G}_{j_m,j_1}|\mathcal{F}_{l_1,\ldots,l_k}\right]=\tilde{G}_{j_1,j_2}\cdots \tilde{G}_{j_{m},j_1}+O(\frac{1}{p^2}),
\end{equation}
where $\mathcal{F}_{l_1,\ldots,l_k} =\sigma(\vv{X}_{l_1,p},\ldots,\vv{X}_{l_k,p})$,  $\tilde{G}_{j_r,j_s}$'s are obtained by replacing those $\vv{S}_{j,p}$'s with $j$ not in $\{l_1,\ldots,l_k\}$ by $\vv{\Sigma}_j$'s in $\hat{G}_{j_r,j_s}$, that is,
\[
\tilde{G}_{j_s,j_t} =\begin{cases}
    \cfrac{1}{p}{\rm tr}(\vv{\Sigma}_{j_s}\vv{\Sigma}_{j_t}),&~\hbox{if neither $j_t$ nor $j_s$ are in $\{l_1,\ldots,l_k\}$};\\
    \cfrac{1}{p}{\rm tr}(\vv{S}_{j_t,p}\vv{\Sigma}_{j_s}),&~\hbox{if $j_t\in\{l_1,\ldots,l_k\}$ but $j_s$ not};\\
    \cfrac{1}{p}{\rm tr}(\vv{\Sigma}_{j_t}\vv{S}_{j_s,p}),&~\hbox{if $j_s\in\{l_1,\ldots,l_k\}$ but $j_t$ not};\\
    \cfrac{1}{p}{\rm tr}(\vv{S}_{j_t,p}\vv{S}_{j_s,p}),&~\hbox{if both $j_t$ and $j_s$ are in $\{l_1,\ldots,l_k\}$},
\end{cases}
\]
and the reminder can be uniformly bounded.   
\end{corollary}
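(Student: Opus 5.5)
The plan is to reduce Corollary \ref{cor:B1} to the iterated-conditioning argument already used in the proof of Lemma \ref{lem:B}, but stopping the replacement procedure once only the indices $l_1,\ldots,l_k$ remain random. Since the samples $\vv{X}_{j_1,p},\ldots,\vv{X}_{j_m,p}$ are mutually independent, conditioning on $\mathcal{F}_{l_1,\ldots,l_k}$ amounts to fixing the sample covariance matrices $\vv{S}_{l_1,p},\ldots,\vv{S}_{l_k,p}$ and integrating out the remaining $m-k$ samples, whose indices I relabel as $u_1,\ldots,u_{m-k}$ (the elements of $\{j_1,\ldots,j_m\}\setminus\{l_1,\ldots,l_k\}$).

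First, I treat the degenerate case $m=1$ (so $k=0$), which is just the unbiasedness of $\hat G_{jj}$ established in Section \ref{ssec:sgm}. For $m\geq 2$ all indices are distinct, so every factor is off-diagonal of the form $\frac1p{\rm tr}(\vv{S}_{j_s,p}\vv{S}_{j_t,p})$ and each $\vv{S}_{u,p}$ with $u$ unconditioned appears in exactly two adjacent factors of the cycle, $\hat G_{j_{s-1},u}$ and $\hat G_{u,j_{s+1}}$. I then integrate out $\vv{S}_{u_1,p}$ conditionally on $\sigma$ of all other samples: by (\ref{eq:A.2}) with $\vv{A}_1=\vv{S}_{j_{s-1},p}$ (or $\vv{\Sigma}_{j_{s-1}}$ if already replaced) and $\vv{A}_2=\vv{S}_{j_{s+1},p}$, the product of these two factors has conditional expectation equal to the product with $\vv{S}_{u_1,p}$ replaced by $\vv{\Sigma}_{u_1}$, plus $\frac{1}{pn_{u_1}}\mu(\vv{A}_1,\vv{A}_2|\vv{\Sigma}_{u_1})$. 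Repeating for $u_2,\ldots,u_{m-k}$ (tower property, since all are independent of $\mathcal{F}_{l_1,\ldots,l_k}$) produces exactly the $\tilde G_{j_s,j_t}$ described in the four cases of the statement.

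The remaining, and main, step is to control the accumulated remainders uniformly. Each remainder is a product of at most $m-2$ factors of type $\frac1p{\rm tr}(\cdot\,\cdot)$ with a term $\frac{1}{pn_u}\mu(\vv{A}_1,\vv{A}_2|\vv{\Sigma}_u)$. By Assumption \ref{assm:2.2} and Assumption \ref{assm:2.3}, $1/(pn_u)\leq C_0/p^2$, and $\mu$ is bounded by a constant times $\|\vv{A}_1\|\|\vv{A}_2\|\|\vv{\Sigma}_u\|^2$. The difficulty is that $\|\vv{S}_{l,p}\|$ is random and not uniformly bounded deterministically; as in the proof of Lemma \ref{lem:B}, I would bound the other factors by $\|\vv{S}_{l,p}\|\|\vv{S}_{l',p}\|$ and argue that, with finite eighth moments (Assumption \ref{assm:2.1}), the largest eigenvalue of each $\vv{S}_{l,p}$ is bounded almost surely and in the required moments (Bai--Yin type bounds), so that the remainder is $O(p^{-2})$ with a constant depending only on $m$ (the number of remainders is at most $m-k\le m$). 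If a deterministic bound is required I would follow the paper's convention of treating $\sup\|\vv{S}_{l,p}\|\leq C'$ as in Lemma \ref{lem:B}, interpreting the $O(p^{-2})$ as uniform in this sense; this is the step I expect to need the most care.
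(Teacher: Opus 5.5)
Your proposal is correct and is essentially the paper's own argument: the paper proves Corollary~\ref{cor:B1} by repeating the proof of Lemma~\ref{lem:B}, integrating out the unconditioned samples one at a time via (\ref{eq:A.2}), which replaces each such $\vv{S}_{u,p}$ by $\vv{\Sigma}_u$ in its two adjacent cycle factors at the cost of a remainder $\frac{1}{pn_u}\mu(\cdot,\cdot|\vv{\Sigma}_u)=O(p^{-2})$. The paper handles the random $\|\vv{S}_{l,p}\|$ issue you flag only by the same convention $\sup\|\vv{S}_{l,p}\|\leq C'$ used in Lemma~\ref{lem:B}; to make this step rigorous without any operator-norm bound, bound each remainder by Cauchy--Schwarz instead. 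Each remainder is then at most $C p^{-2}$ times a product over distinct indices $l$ of $\frac{1}{p}{\rm tr}(\vv{S}_{l,p}^2)$, since each $\vv{S}_{l,p}$ enters only quadratically, and these factors have uniformly bounded moments under Assumptions~\ref{assm:2.1}--\ref{assm:2.3}, so the $O(p^{-2})$ holds in $L^1$ and $L^2$.
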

The proof of Corollary \ref{cor:B1} is quite similar as step (ii) by using (\ref{eq:A.2}) and taking conditional expectations repeatedly. Thus,  we omit the details to save space.

With the help of Lemma \ref{lem:B}, we are now ready to prove Lemma \ref{lem:detG}.  
\begin{proof}[Proof of Lemma \ref{lem:detG}]
Without loss of generality, we assume $(i_1,\ldots,i_{k})=(1,\ldots,k)$. Recall that
\[
{\rm det}(\hat{\vv{G}}_{1,\ldots,k})=\sum_{(j_1,\ldots,j_{k})} {\rm sgn}\left(\begin{smallmatrix}1&2&\cdots&k\\j_1&j_2&\cdots&j_{k}\end{smallmatrix}\right) \hat{G}_{1,j_1}\cdots\hat{G}_{k,j_{k}},
\]
where ${\rm sgn}\left(\begin{smallmatrix}1&2&\cdots&k\\j_1&j_2&\cdots&j_{k}\end{smallmatrix}\right)$ is the sign of permutation $\left(\begin{smallmatrix}1&2&\cdots&k\\j_1&j_2&\cdots&j_{k}\end{smallmatrix}\right)$. 

It suffices to show that for any permutation $(j_1,\ldots,j_{k})$,
\begin{equation}\label{eq:B.1}
\mathbb{E}\left[\hat{G}_{1,j_1}\cdots\hat{G}_{k,j_{k}}\right]=G_{1,j_1}\cdots G_{k,j_{k}}+O(\frac{1}{p^2}),
\end{equation}
where the reminder can be uniformly bounded. 

Note that the arrangement of $\left(\begin{smallmatrix}1&2&\cdots&k\\j_1&j_2&\cdots&j_{k}\end{smallmatrix}\right)$ corresponds uniquely to a permutation of $\{1,\ldots,k\}$. So, it can be written as a composition of several disjoint cyclic permutations. Since indices in distinct cyclic permutations are disjoint, the corresponding cyclic products $\hat{G}_{l_1,l_2}\hat{G}_{l_2,l_3}\cdot\hat{G}_{l_{m-1},l_m}\cdot\hat{G}_{l_m,l_1}$ must be independent. Thus, the expectation in the left-hand side of (\ref{eq:B.1}) can be written as a product of expectations of cyclic productions:
\[
\mathbb{E}\left[\hat{G}_{l_1,l_2}\hat{G}_{l_2,l_3}\cdot\hat{G}_{l_{m-1},l_m}\cdot\hat{G}_{l_m,l_1}\right].
\]
Then, by Lemma \ref{lem:B}, Equation (\ref{eq:B.1}) must hold.   
\end{proof}

The previous discussion focuses mainly on the expectation of cyclic products of $\hat{G}_{ij}$'s. The next lemma shows the uniform boundedness of their variances. 
\begin{lem}\label{lem:B2}
    Suppose that Assumptions \ref{assm:2.1}---\ref{assm:2.4} hold.
Let $m$ be a positive integer and $(j_1,\ldots,j_m)\in \{1,\ldots,q\}$ be a combination of distinct indices. Then, there exists a positive constant dependent only on $m$ such that
\begin{equation}\label{eq:B4}
    {\rm Var}(p\hat{G}_{j_1,j_2}\cdots \hat{G}_{j_m,j_1}) \leq C.
\end{equation}
Consequently, $\{{\rm Var}(p{\rm det}(\hat{G}(j_1,\ldots,j_m)))\}$ are uniformly bounded by certain constant dependent only on $m$. 
\end{lem}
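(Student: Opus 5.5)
The plan is to write ${\rm Var}(p\,\hat{G}_{j_1,j_2}\cdots\hat{G}_{j_m,j_1}) = p^2\{\E[(\hat{G}_{j_1,j_2}\cdots\hat{G}_{j_m,j_1})^2] - (\E[\hat{G}_{j_1,j_2}\cdots\hat{G}_{j_m,j_1}])^2\}$. The square of the cyclic product is again a product of entries $\hat{G}_{j_r,j_s}$ in which every sample index $j_r$ appears exactly four times (twice in each copy). We then handle this squared product by the same successive conditioning as in Lemma \ref{lem:B}.

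\textbf{Case $m=1$.} Here $p\hat{G}_{jj}$ is an affine combination of ${\rm tr}(\vv{S}_{j,p}^2)$, $p^{-1}{\rm tr}(\vv{S}_{j,p})^2$ and $\hat{\eta}_{4,j}$, with bounded coefficients. Its variance is bounded by (\ref{eq:A.5}) together with a standard $U$-statistic variance bound for $\hat{\eta}_{4,j}$; the latter uses the finite eighth moment in Assumption \ref{assm:2.1}.

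\textbf{Case $m\ge 2$.} Condition on $\mathcal{F}_{m-1}$, i.e.\ on all samples except $\vv{X}_{j_m,p}$. Index $j_m$ appears only through $\hat{G}_{j_{m-1},j_m}^2\hat{G}_{j_m,j_1}^2$, which is a product of four factors $p^{-1}{\rm tr}(\vv{S}_{j_m,p}\vv{B}_l)$ with $\vv{B}_l\in\{\vv{S}_{j_{m-1},p},\vv{S}_{j_1,p}\}$. We need a fourth-order analogue of (\ref{eq:A.2}):
\[
\E\Big[\prod_{l=1}^4 \tfrac{1}{p}{\rm tr}(\vv{S}_{n}\vv{B}_l)\Big] = \prod_{l=1}^4 \tfrac{1}{p}{\rm tr}(\vv{\Sigma}\vv{B}_l) + \frac{1}{pn}\sum_{\text{pairs}} \mu(\vv{B}_a,\vv{B}_b|\vv{\Sigma})\prod_{\text{other two}}\tfrac{1}{p}{\rm tr}(\vv{\Sigma}\vv{B}_l) + O(p^{-4}).
\]
This follows from (\ref{eq:A.4}) combined with the bounded moments of centered quadratic forms, $\E|p^{-1}{\rm tr}((\vv{S}_n-\vv{\Sigma})\vv{B})|^k = O(p^{-k})$ for $k\le 4$. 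The conditioning matrices $\vv{S}_{j,p}$ do not have operator norm bounded by a constant almost surely. So instead of the crude bound used in Lemma \ref{lem:B}, we keep the $O(p^{-2})$ correction terms as polynomials in traces of products of $\vv{S}$'s and $\vv{\Sigma}$'s, and bound their expectations through the moment bounds afterwards.

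Iterating over $j_{m-1},\ldots,j_1$ then gives
\[
\E[(\hat{G}_{j_1,j_2}\cdots\hat{G}_{j_m,j_1})^2] = (G_{j_1,j_2}\cdots G_{j_m,j_1})^2 + \frac{1}{p^2}D + O(p^{-3}),
\]
where $D$ is a finite sum of products of $\mu(\cdot,\cdot|\vv{\Sigma}_j)$ and normalized traces, all bounded by Assumption \ref{assm:2.2} and the remark after the definition of $\mu$. By Lemma \ref{lem:B}, $(\E[\,\cdot\,])^2 = (G\cdots G)^2 + O(p^{-2})$. The leading terms therefore cancel and the variance of the unscaled product is $O(p^{-2})$, so ${\rm Var}(p\,\cdot)=O(1)$ with a constant depending only on $m$ through the number of terms.

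\textbf{Main obstacle.} The delicate point is making the $O(p^{-2})$ remainders of the first and second moments match to the needed order, and keeping them uniform when the conditioned-on $\vv{S}$'s are random and unbounded. If direct tracking is messy, I would fall back on the Efron--Stein inequality over the $m$ independent samples. Replacing $\vv{X}_{j_r,p}$ by an independent copy changes only the two factors containing $j_r$, each by a centered quadratic-form fluctuation of order $p^{-1}$ in $L^4$. Hölder's inequality with bounded $L^8$ norms of the other factors then bounds each Efron--Stein term by $C p^{-2}$, which again gives ${\rm Var}=O(p^{-2})$.

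\textbf{Determinants.} The claim for $p\,{\rm det}(\hat{\vv{G}}(j_1,\ldots,j_m))$ follows by expanding the determinant into $m!$ signed products of disjoint cycles, as in the proof of Lemma \ref{lem:detG}. Products over independent disjoint cycles have variance controlled by the bounded second moments of each cycle via ${\rm Var}(XY)\le \E X^2{\rm Var}Y+(\E Y)^2{\rm Var}X$. Finally, $(a_1+\cdots+a_{m!})$ has variance at most $m!\sum_k {\rm Var}(a_k)$.
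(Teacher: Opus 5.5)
Your main line is correct and is essentially the paper's argument: the paper also integrates out one sample at a time, conditioning on all the others and using the quadratic-form moment identities of Appendix~\ref{app:A} and Corollary~\ref{cor:B1}, only bookkept through the law of total variance instead of as $\E[X^2]-(\E X)^2$; your version needs no cancellation at order $p^{-2}$, since both $\E[X^2]$ and $(\E X)^2$ equal $(G_{j_1,j_2}\cdots G_{j_m,j_1})^2+O(p^{-2})$. Do not rely on the Efron--Stein fallback as stated, though: Assumption~\ref{assm:2.1} gives only eighth moments, so $\frac{1}{p}{\rm tr}(\vv{S}_a\vv{S}_b)$ need not have a finite $L^r$ norm for any $r>4$ (already $\E(\vv{x}_{a,1}^\T\vv{x}_{b,1})^{2r}$ can be infinite), and the unconditional H\"older step must be replaced by the same iterated conditioning, which keeps every sample at degree at most four in its $\vv{S}_j$.
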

\begin{proof}
In general, the proof is based on results in Section \ref{app:A} and the law of total variance:
\[
{\rm Var}(X) = \E[{\rm Var}(X|\mathcal{F})] + {\rm Var}(\E[X|\mathcal{F}]),
\]
where $X$ is a random variable and $\mathcal{F}$ is a $\sigma$-field.
To avoid tedious but lengthy calculation, we only take ${\rm Var}(p\hat{G}_{12}\hat{G}_{23}\hat{G}_{31})$ as an example. 

Let $W_p = p\hat{G}_{12}\hat{G}_{23}\hat{G}_{31}$. By the law of total variance, we have 
\begin{align*}
    {\rm Var}(W_p) & = \E[{\rm Var}(W_p|\vv{S}_{1,p},\vv{S}_{2,p})] + {\rm Var}(\E[W_p|\vv{S}_{1,p},\vv{S}_{2,p}]). 
\end{align*}
On one hand, by (\ref{eq:A.5}),
\begin{align*}
    &{\rm Var}\left(\frac{1}{p}{\rm tr}(\vv{S}_{3,p}\vv{S}_{2,p}){\rm tr}(\vv{S}_{3,p}\vv{S}_{1,p})|\vv{S}_{2,p},\vv{S}_{1,p}\right) \\
    =& 4 c_{3,p} \left(\frac{1}{p}{\rm tr}(\vv{S}_{1,p}\vv{\Sigma}_3)\right) \left(\frac{1}{p}{\rm tr}(\vv{S}_{2,p}\vv{\Sigma}_3)\right)\mu(\vv{S}_{1,p},\vv{S}_{2,p}|\vv{\Sigma}_3) + o(1),
\end{align*}
where 
\[
\left(\frac{1}{p}{\rm tr}(\vv{S}_{1,p}\vv{\Sigma}_3)\right),~~~ \left(\frac{1}{p}{\rm tr}(\vv{S}_{2,p}\vv{\Sigma}_3)\right)~~~\hbox{and}~~~\mu(\vv{S}_{1,p},\vv{S}_{2,p}|\vv{\Sigma}_3)
\]
can be uniformly bounded due to Assumptions \ref{assm:2.2}---\ref{assm:2.4}. Hence,
\[
{\rm Var}(W_p|\vv{S}_{1,p},\vv{S}_{2,p})= \left(\frac{1}{p}{\rm tr}(\vv{S}_{1,p}\vv{S}_{2,p})\right)^2\left[4 c_{3,p} \left(\frac{1}{p}{\rm tr}(\vv{S}_{1,p}\vv{\Sigma}_3)\right) \left(\frac{1}{p}{\rm tr}(\vv{S}_{2,p}\vv{\Sigma}_3)\right)\mu(\vv{S}_{1,p},\vv{S}_{2,p}|\vv{\Sigma}_3) + o(1)\right]
\]
can also be uniformly bounded. 

On the other hand, according to Corollary \ref{cor:B1},
\[
\E[W_p|\vv{S}_{1,p},\vv{S}_{2,p}] = \frac{1}{p^2}{\rm tr}(\vv{S}_{1,p}\vv{S}_{2,p}) {\rm tr}(\vv{S}_{2,p}\vv{\Sigma}_3) {\rm tr}(\vv{S}_{1,p}\vv{\Sigma}_3) + O(\frac{1}{n_3}) = W_{1,p}+ O(\frac{1}{n_3})
\]
Hence, ${\rm Var}(\E[W_p|\vv{S}_{1,p},\vv{S}_{2,p}]) = {\rm Var}(W_{1,p}) + o(1)$.
Then, we repeatedly use the law of total variance to have
\[
{\rm Var}(W_{1,p}) = \E[{\rm Var}(W_{1,p}|\vv{S}_{1,p})] + {\rm Var}(\E[W_{1,p}|\vv{S}_{1,p}]),
\]
where, by Equations (\ref{eq:A.2}) and (\ref{eq:A.5}), we have
\begin{align*}
    \E[W_{1,p}|\vv{S}_{1,p}] & = \left(\frac{1}{p} {\rm tr}(\vv{S}_{1,p}\vv{\Sigma}_3){\rm tr}(\vv{S}_{1,p}\vv{\Sigma}_2)\right)\left(\frac{1}{p}{\rm tr}(\vv{\Sigma}_2\vv{\Sigma}_3)\right) + O(\frac{1}{n_2}) \\
    {\rm Var}(W_{1,p}|\vv{S}_{1,p}) & = 4c_{2,p}\left(\frac{1}{p}{\rm tr}(\vv{S}_{1,p}\vv{\Sigma}_3)\right)^2 \left(\frac{1}{p}{\rm tr}(\vv{S}_{1,p}\vv{\Sigma}_2)\right)\left(\frac{1}{p}{\rm tr}(\vv{\Sigma}_{2}\vv{\Sigma}_3)\right)\mu(\vv{S}_{1,p},\vv{\Sigma}_3|\vv{\Sigma}_2) + O(\frac{1}{p}).
\end{align*}
Therefore, we conclude that ${\rm Var}(W_{1,p})$ is uniformly bounded. Combining discussion above, we also see that ${\rm Var}(\E[W_p|\v{S}_{1,p},\vv{S}_{2,p}])$ is uniformly bounded. 
\end{proof}

\section{Proof of Theorem \ref{thm:AN}}\label{app:C}
We derive the asymptotic normaility of $\hat{M}_p^{(d_0+1)}$ by considering its H\'{a}jeck projection onto the space 
\[
\mathcal{S}_p:=\left\{\sum_{i=1}^q g_{i,p}(\vv{X}_{i,p}):\mathbb{E}[g_{i,p}(\vv{X}_{i,p})^2]<\infty,~i=1,\ldots,q\right\}.
\]
Specifically, the H\'{a}jeck projection
\begin{align*}
   \tilde{M}_p^{(d_0+1)} &:= \E\left\{\hat{M}_p^{(d_0+1)}-\E[\hat{M}_p^{(d_0+1)}]|\mathcal{S}_p\right\}\\
   &= \frac{(d_0+1)}{q} \sum_{i=1}^q \E\left\{\hat{M}_p^{(d_0+1)} - \E[\hat{M}_p^{(d_0+1)}]|\vv{X}_{i,p}\right\}\\
   &=: \frac{(d_0+1)}{q}\sum_{i=1}^q h_{1,i}(\vv{X}_{i,p})
\end{align*}
is in fact a sum of independent random variables, where we denote 
\begin{align*}
h_{1,i}(\vv{X}_{i,p})&=\binom{q-1}{d_0}^{-1}\sum_{(i_2,\ldots,i_{d_0+1})\neq i} \bigg\{\mathbb{E}\left[{\rm det}(\hat{\vv{G}}(i,i_2,\ldots,i_{d_0+1}))|\vv{X}_{i,p}\right]\\
&~~~~~~~~~~~~~~~~~~~~~~~~~~~~~~~~~~~~~~~~~~-\E[\hat{\vv{G}}(i,i_2,\ldots,i_{d_0+1})]\bigg\}.
\end{align*}
We conclude that $\tilde{M}_p^{(d_0+1)}$ is a ``good'' approximation to $\hat{M}_p^{(d_0+1)}$ in the sense that
\[
\frac{p(\hat{M}_p^{(d_0+1)} - \E[\hat{M}_p^{(d_0+1)}])}{{\rm Var}(\hat{M}_p^{(d_0+1)})^{1/2}} - \frac{p\tilde{M}_p^{(d_0+1)}}{{\rm Var}(\tilde{M}_p^{(d_0+1)})^{1/2}} \overset{p.}{\to} 0.
\]
In fact, according to Theorem 11.2 in \cite{vander2000},the conclusion holds if 
\[
{\rm Var}(\tilde{M}_p^{(d_0+1)})/{\rm Var}(\hat{M}_p^{(d_0+1)}) \to 1.
\]
To verify it, recall in Lemma \ref{lem:B2} that ${\rm Var}(p{\rm det}(\hat{G}(i_1,\ldots,i_{d_0+1})))< C$ for certain constant $C$ dependent only on $d_0$. Then, for any $I=(i_1,\ldots,i_{d_0+1})$ and $J=(j_1,\ldots,j_{d_0+1})$, 
\[
|{\rm Cov}(p{\rm det}(\hat{\vv{G}}(I)),p{\rm det}(\hat{\vv{G}}(J)))|\leq {\rm Var}(p{\rm det}(\hat{G}(I))^{1/2}{\rm Var}(p{\rm det}(\hat{G}(J))^{1/2} \leq C,
\]
and
\begin{align*}
    {\rm Var}(p\hat{M}_p^{(d_0+1)}) & = \binom{q}{d_0+1}^{-2} \sum_{\substack{I=(i_1,\ldots,i_{d_0+1})\\J=(j_1,\ldots,j_{d_0+1}))}} {\rm Cov}(p{\rm det}(\hat{\vv{G}}(I)),p{\rm det}(\hat{\vv{G}}(J))) \\
    & = \binom{q}{d_0+1}^{-2} \sum_{k=1}^{d_0+1} \sum_{I,J: |I\cap J| =k} {\rm Cov}(p{\rm det}(\hat{\vv{G}}(I)),p{\rm det}(\hat{\vv{G}}(J))) \\
    & = \frac{(d_0+1)^2}{q^2}\sum_{i=1}^q \binom{q-1}{d_0}^{-2}\sum_{I\cap J =\{i\}} {\rm Cov}(p{\rm det}(\hat{\vv{G}}(I)),p{\rm det}(\hat{\vv{G}}(J))) + o(\frac{1}{q})\\
    & = \frac{(d_0+1)^2}{q^2}\sum_{i=1}^q {\rm Var}(p h_{1,i}(\vv{X}_{i,p})) + o(\frac{1}{q}) \\
    & = {\rm Var}(p\tilde{M}_p^{(d_0+1)}) + o(\frac{1}{q}).
\end{align*}
Hence, to prove Theorem \ref{thm:AN}, it suffices to establish asymptotic normality of $p\tilde{M}_p^{(d_0+1)}$. 

According to Corollary \ref{cor:B1}, we have
\begin{equation}\label{eq:C.1}
\mathbb{E}\left[{\rm det}(\hat{\vv{G}}(i,i_2,\ldots,i_{d_0+1}))|\vv{X}_{i,p}\right]={\rm det}(\tilde{\vv{G}}(i,i_2,\ldots,i_{d_0+1}))+O(\frac{1}{p^2}),
\end{equation}
in which $\tilde{\vv{G}}(i,i_2,\ldots,i_{d_0+1})$ is the modified Gram matrix generated by $\vv{S}_{i,p}$ and $\{\vv{\Sigma}_{i_2},\ldots,\vv{\Sigma}_{i_{d_0+1}}\}$, i.e.,
\begin{equation}\label{eq:C.2}
\tilde{\vv{G}}(i,i_2,\ldots,i_{d_0+1}):=\begin{pmatrix}
            \tilde{G}_{ii}&\tilde{G}_{i,i_2}&\cdots&\tilde{G}_{i,i_{d_0+1}}\\
            \tilde{G}_{i_2,i}&G_{i_2,i_2}&\cdots&\mu_{i_2,i_{d_0+1}}\\
            \vdots&\vdots&\ddots&\vdots\\
            \tilde{G}_{i_{K+1},i}&G_{i_{K+1},i_2}&\cdots&G_{i_{d_0+1}i_{d_0+1}}
\end{pmatrix}
\end{equation}
with $\tilde{G}_{ii}=\hat{G}_{ii}$ and $\tilde{G}_{ij}=\tilde{G}_{ji}=\frac{1}{p}{\rm tr}(\vv{S}_{ip}\vv{\Sigma}_j)$ for $j\neq i$. It can be seen that only the $(1,1)$th entry is modified by $\hat{G}_{ii}$ to ensure an asymptotic unbiased estimation of $G_{ii}$. In addition, it also holds that
\begin{equation}\label{eq:C.3}
\E[\tilde{\vv{G}}(i,i_2,\ldots,i_{d_0+1})] = \E[\hat{\vv{G}}(i,i_2,\ldots,i_{d_0+1})]+O(\frac{1}{p^2}).
\end{equation}
It should be noticed that all reminders of order $O(1/p^2)$ in (\ref{eq:C.1}) and (\ref{eq:C.3}) can be bounded uniformly due to Assumption \ref{assm:2.2}.  
Thus, we conclude that
\begin{equation}
\begin{split}
h_{1,i}(\vv{X}_{i,p}) &= \binom{q-1}{d_0}^{-1}\sum_{(i_2,\ldots,i_{d_0+1})\neq i} \bigg\{{\rm det}(\tilde{\vv{G}}(i,i_2,\ldots,i_{d_0+1}))\\
&~~~~~~~~~~~~~~~~~~~~~~~~~~~~~~~~~-\E[{\rm det}(\tilde{\vv{G}}(i,i_2,\ldots,i_{d_0+1}))]\bigg\}+O(\frac{1}{p^2}).
\end{split}
\end{equation}

Let
\begin{equation}\label{eq:Hip}
H_{i,p} = \binom{q-1}{d_0}^{-1}\sum_{(i_2,\ldots,i_{d_0+1})\neq i}{\rm det}(\tilde{\vv{G}}(i,i_2,\ldots,i_{d_0+1})).
\end{equation}
Then, we have ${\rm Var}(p h_{1,i}(\vv{X}_{i,p})) = {\rm Var}(p H_{i,p})$. In what follows, we are going to calculate the leading term of the variance of $H_{i,p}$ with the help of results in Section \ref{app:A}. 

Fixing order of $(i_2,\ldots,i_{d_0+1})$, we expand ${\rm det}(\tilde{\vv{G}}(i,i_2,\ldots,i_{d_0+1}))$ :
\begin{align*}
&{\rm det}(\tilde{\vv{G}}(i,i_2,\ldots,i_{d_0+1})) \\
=&\tilde{G}_{ii}\cdot {\rm det}(\vv{G}(i_2,\ldots,i_{d_0+1}))+ \sum_{j=2}^{d_0+1}(-1)^{j+1}\tilde{G}_{i_j,i}{\rm det}(\tilde{\vv{G}}_{i_j}(i_2,\ldots,i_{d_0+1}))\\
=&\tilde{G}_{ii}\cdot {\rm det}(\vv{G}(i_2,\ldots,i_{d_0+1})) + \sum_{j,l=2}^{d_0+1}(-1)^{j+l+1} \tilde{G}_{i_j,i}\tilde{G}_{i,i_l}  {\rm det}(\vv{G}_{i_j|i_l}(i_2,\ldots,i_{d_0+1}))\\
=&\tilde{G}_{ii}\cdot {\rm det}(\vv{G}(i_2,\ldots,i_{d_0+1}))-\sum_{j=2}^{d_0+1}\tilde{G}_{i_j,i}^2 {\rm det}(\vv{G}_{i_j|i_j}(i_2,\ldots,i_{d_0+1}))\\
+&\sum_{2\leq j\neq l\leq d_0+1} (-1)^{j+l+1}\tilde{G}_{i_j,i}\tilde{G}_{i,i_l}{\rm det}(\vv{G}_{i_j|i_l}(i_2,\ldots,i_{d_0+1}))
\end{align*}
where $\tilde{\vv{G}}_{i_j}(i_2,\ldots,i_{d_0+1})$ is a $d_0\times d_0$ matrix obtained by removing the first column and the $j$th row of $\tilde{\vv{G}}(i,i_2,\ldots,i_{d_0+1})$, 
and $\vv{G}_{i_j|i_l}(i_2,\ldots,i_{d_0+1})$ is derived by deleting the row with index $i_j$ and the column with index $i_l$ from $\vv{G}(i_2,\ldots,i_{d_0+1})$.

Let $A_n^k = n(n-1)\cdots(n-k+1)$. Then, we have
\begin{equation}\label{eq:H}
\begin{split}
&H_{i,p}=\binom{q-1}{K}^{-1}\sum_{(i_2,\ldots,i_{d_0+1})\neq i} {\rm det}(\tilde{\vv{G}}(i,i_2,\ldots,i_{d_0+1}))\\
&=\left(A_{q-1}^{d_0}\right)^{-1}\sum_{i_2,\cdots,i_{d_0+1}\neq i} {\rm det}(\tilde{\vv{G}}(i,i_2,\ldots,i_{K+1}))\\
&=\left(A_{q-1}^{d_0}\right)^{-1}\sum_{i_2,\cdots,i_{d_0+1}\neq i}  \bigg\{\tilde{G}_{ii}{\rm det}(\vv{G}(i_2,\ldots,i_{K+1}))\\
&~~~~~~~~~~~~~~~~~~~~~~~~~~~~~~~~~+\sum_{j,l=2}^{d_0+1}(-1)^{j+l+1} \tilde{G}_{i,i_{j}}\tilde{G}_{i_l,i}{\rm det}(\vv{G}_{i_j|i_l}(i_2,\ldots,i_{d_0+1}))
\bigg\}\\
&=\tilde{G}_{ii}\left\{\binom{q-1}{d_0}^{-1}\sum_{(i_2,\ldots,i_{d_0+1})\neq i} {\rm det}(\vv{G}(i_2,\ldots,i_{d_0+1}))\right\}\\
&-\frac{1}{q-1}\sum_{r\neq i} \hat{\mu}_{i,r}^2\cdot\left\{\left(A_{q-2}^{d_0-1}\right)^{-1} \sum_{j=2}^{d_0+1}\sum_{\substack{i_2,\ldots,i_{d_0+1}\neq i\\i_j = r}}{\rm det}(\vv{G}_{r|r}(i_2,\ldots,i_{d_0+1}))\right\}\\
+&\frac{1}{(q-1)(q-2)}\sum_{r\neq s \neq i} \tilde{G}_{i,r}\tilde{G}_{i,s}\left\{\left(A_{q-3}^{d_0-2}\right)^{-1}\sum_{2\leq j\neq l\leq d_0+1}(-1)^{j+l+1}\sum_{\substack{i_2,\ldots,i_{d_0+1}\neq i\\ i_j = r, i_l =s}}{\rm det}(\vv{G}_{r|s}(i_2,\ldots,i_{d_0+1}))\right\} \\
& = : \tilde{G}_{ii} \alpha_{i,q} - \frac{1}{q-1}\sum_{r\neq i} \tilde{G}_{i,r}^2 \beta_{ir,q} + \frac{1}{(q-1)(q-2)}\sum_{r\neq s\neq i} \tilde{G}_{i,r}\tilde{G}_{i,s} \gamma_{irs,q},
\end{split}
\end{equation}
where
\begin{align}
\alpha_{i,q}&:=\binom{q-1}{d_0}^{-1}\sum_{i_2,\ldots,i_{d_0+1}\neq i} {\rm det}(\vv{G}(i_2,\ldots,i_{d_0+1})),\\
\beta_{ir,q}&:=\left(A_{q-2}^{d_0-1}\right)^{-1} \sum_{j=2}^{d_0+1}\sum_{\substack{i_2,\ldots,i_{d_0+1}\neq i\\i_j = r}}{\rm det}(\vv{G}_{r|r}(i_2,\ldots,i_{d_0+1})),\\
\gamma_{irs,q}&:=\left(A_{q-3}^{d_0-2}\right)^{-1}\sum_{2\leq j\neq l\leq d_0+1}(-1)^{j+l+1}\sum_{\substack{i_2,\ldots,i_{d_0+1}\neq i\\ i_j = r, i_l =s}}{\rm det}(\vv{G}_{r|s}(i_2,\ldots,i_{d_0+1})).
\end{align}
Multiplying $H_{i,p}$ by $p$, we find that
\begin{align*}
{\rm Var}(p H_{i,p})&={\rm Var}(p\tilde{G}_{ii}) \alpha_{i,q}^2+\frac{1}{(q-1)^2}\sum_{r,s\neq i} {\rm Cov}(p\tilde{G}_{ir}^2,p\tilde{G}_{is}^2)\beta_{ir,q}\beta_{is,q}\\
&+\frac{1}{(q-1)^2(q-2)^2}\sum_{r\neq s;r'\neq s'}{\rm Cov}(p\tilde{G}_{ir}\tilde{G}_{is},p\tilde{G}_{ir'}\tilde{G}_{is'})\cdot \gamma_{irs,q}\gamma_{ir's',q}\\
&-\frac{2}{q-1}\sum_{r\neq i}{\rm cov}(p\tilde{G}_{ii},p\tilde{G}_{ir}^2)\alpha_{i,q}\beta_{ir,q}\\
&+\frac{2}{(q-1)(q-2)}\sum_{r\neq s\neq i}{\rm cov}(p\tilde{G}_{ii},p\tilde{G}_{ir}\tilde{G}_{is})\alpha_{i,q}\gamma_{irs,q}\\
&-\frac{2}{(q-1)^2(q-2)}\sum_{r\neq i}\sum_{r'\neq s'\neq i}{\rm cov}(p\tilde{G}_{ir}^2,p\tilde{G}_{ir'}\tilde{G}_{is'})\beta_{ir;q}\gamma_{ir's',q}.
\end{align*}

Recall Equations (\ref{eq:A.4}), (\ref{eq:A.5}) and (\ref{eq:A.6}), 
we have
\begin{equation}\label{eq:vr}
\begin{split}
{\rm Var}(p\tilde{G}_{ii})&= {\rm Var}\left({\rm tr}(\vv{S}_{i,p}^2) - c_{i,p}\frac{1}{p}{\rm tr}(\vv{S}_{i,p})^2\right) + O(\frac{1}{p})\\
&=4c_{i,p}^2G_{ii}^2+4c_{i,p}\mu(\vv{\Sigma}_i,\vv{\Sigma}_i|\vv{\Sigma}_i)+O(\frac{1}{p}),\\
{\rm Cov}(p\tilde{G}_{ir}^2,p\tilde{G}_{is}^2)&=4c_{i,p}G_{ir}G_{is}\mu(\vv{\Sigma}_r,\vv{\Sigma}_s|\vv{\Sigma}_i)+O(\frac{1}{p}),\\
{\rm Cov}(p\tilde{G}_{ii},p\tilde{G}_{ir}^2)&=4c_{i,p}G_{ir}\mu(\vv{\Sigma}_i,\vv{\Sigma}_r|\vv{\Sigma}_i)+O(\frac{1}{p}),\\
{\rm Cov}(p\tilde{G}_{ii},p\tilde{G}_{ir}\tilde{G}_{is})&=2c_{i,p}G_{ir}\mu(\vv{\Sigma}_i,\vv{\Sigma}_s|\vv{\Sigma}_i)\\
&+2c_{i,p}G_{is}\mu(\vv{\Sigma}_i,\vv{\Sigma}_r|\vv{\Sigma}_i)+O(\frac{1}{p}),\\
{\rm Cov}(p\tilde{G}_{ir}^2,p\tilde{G}_{ir'}\tilde{G}_{is'})&=2c_{i,p}G_{ir}G_{ir'}\mu(\vv{\Sigma}_r,\vv{\Sigma}_s'|\vv{\Sigma}_i)\\
&+2c_{i,p}G_{ir}G_{is'}\mu(\vv{\Sigma}_r,\vv{\Sigma}_r'|\vv{\Sigma}_i)+O(\frac{1}{p}),\\
{\rm Cov}(p\tilde{G}_{ir}\tilde{G}_{is},p\tilde{G}_{ir'}\tilde{G}_{is'})&=c_{i,p}G_{ir}G_{ir'}\mu(\vv{\Sigma}_s,\vv{\Sigma}_s'|\vv{\Sigma}_i)\\
&+c_{i,p}G_{ir}G_{is'}\mu(\vv{\Sigma}_s,\vv{\Sigma}_r'|\vv{\Sigma}_i)\\
&+c_{i,p}G_{is}G_{ir'}\mu(\vv{\Sigma}_r,\vv{\Sigma}_{s'}|\vv{\Sigma}_i)\\
&+c_{i,p}G_{is}G_{is'}\mu(\vv{\Sigma}_r,\vv{\Sigma}_{r'}|\vv{\Sigma}_i)+O(\frac{1}{p}),
\end{split}
\end{equation}
where $\mu(\cdot,\cdot|\vv{\Sigma}_i)$ follows definition (\ref{eq:A.1}). Then,
\begin{align*}
{\rm Var}(p H_{i,p})&= \alpha_{i,q}^2\left\{4c_{i,p}^2G_{ii}^2+4c_{i,p}\mu(\vv{\Sigma}_i,\vv{\Sigma}_i|\vv{\Sigma}_i)\right\}\\
&+\frac{1}{(q-1)^2}\sum_{r,s\neq i} 4c_{i,p}G_{ir}G_{is}\beta_{ir,q}\beta_{is,q}\mu(\vv{\Sigma}_r,\vv{\Sigma}_s|\vv{\Sigma}_i)\\
&+\frac{1}{(q-1)^2(q-2)^2}\sum_{r\neq s;r'\neq s'} \gamma_{irs,q}\gamma_{ir's',q}\bigg\{c_{i,p}G_{ir}G_{ir'}\mu(\vv{\Sigma}_s,\vv{\Sigma}_s'|\vv{\Sigma}_i)\\
&~~~~~~~~~~~~~~~~~~~~~~~~~~~~~~~~~~~~~~~~~~~~~~~~~~+c_{i,p}G_{ir}G_{is'}\mu(\vv{\Sigma}_s,\vv{\Sigma}_r'|\vv{\Sigma}_i)\\
&~~~~~~~~~~~~~~~~~~~~~~~~~~~~~~~~~~~~~~~~~~~~~~~~~~+c_{i,p}G_{is}G_{ir'}\mu(\vv{\Sigma}_r,\vv{\Sigma}_s'|\vv{\Sigma}_i)\\
&~~~~~~~~~~~~~~~~~~~~~~~~~~~~~~~~~~~~~~~~~~~~~~~~~~+c_{i,p}G_{is}G_{is'}\mu(\vv{\Sigma}_r,\vv{\Sigma}_r'|\vv{\Sigma}_i)\bigg\}\\
&-\frac{2}{q-1}\sum_{r\neq i}4c_{i,p}G_{ir}\alpha_{i,q}\beta_{ir,q}\mu(\vv{\Sigma}_i,\vv{\Sigma}_r|\vv{\Sigma}_i)\\
&+\frac{2}{(q-1)(q-2)}\sum_{r\neq s\neq i}\alpha_{i,q}\gamma_{irs,q}\bigg\{2c_{i,p}G_{ir}\mu(\vv{\Sigma}_i,\vv{\Sigma}_s|\vv{\Sigma}_i)\\
&~~~~~~~~~~~~~~~~~~~~~~~~~~~~~~~~~~~~~~~~~~~~+2c_{i,p}G_{is}\mu(\vv{\Sigma}_i,\vv{\Sigma}_r|\vv{\Sigma}_i)\bigg\}\\
&-\frac{2}{(q-1)^2(q-2)}\sum_{r\neq i}\sum_{r'\neq s'\neq i}\beta_{ir;q}\gamma_{ir's',q}\bigg\{2c_{i,p}G_{ir}G_{ir'}\mu(\vv{\Sigma}_r,\vv{\Sigma}_{s'}|\vv{\Sigma}_i)\\
&~~~~~~~~~~~~~~~~~~~~~~~~~~~~~~~~~~~~~~~~~~~~~~~~~~+2c_{i,p}G_{ir}G_{is'}\mu(\vv{\Sigma}_r,\vv{\Sigma}_{r'}|\vv{\Sigma}_i)\bigg\}  + O(\frac{1}{p})\\
& = \alpha_{i,q}^2\left\{4c_{i,p}^2 G_{ii}^2\right\} + 4c_{i,p}\mu(\alpha_{i,q}\vv{\Sigma}_i,\alpha_{i,q}\vv{\Sigma}_i|\vv{\Sigma}_i) \\
& + 4c_{i,p}\mu\left(\frac{1}{q-1}\sum_{r\neq i} G_{ir}\beta_{ir,q}\vv{\Sigma}_r, \frac{1}{q-1}\sum_{s\neq i} G_{is}\beta_{is,q}\vv{\Sigma}_s\bigg|\vv{\Sigma}_i\right)\\
& + 4c_{i,p}\mu\left(\frac{1}{(q-1)(q-2)}\sum_{r\neq s\neq i} G_{ir}\gamma_{irs,q}\vv{\Sigma}_s, \frac{1}{(q-1)(q-2)}\sum_{r'\neq s'\neq i} G_{ir'}\gamma_{ir's',q}\vv{\Sigma}_{s'}\bigg|\vv{\Sigma}_i\right)\\
& -2\cdot 4c_{i,p}\mu\left(\alpha_{i,q}\vv{\Sigma}_i,\frac{1}{q-1}\sum_{r\neq i} G_{ir}\beta_{ir,q}\vv{\Sigma}_r\bigg|\vv{\Sigma}_i\right)\\
& +2\cdot 4c_{i,p}\mu\left(\alpha_{i,q}\vv{\Sigma}_i,\frac{1}{(q-1)(q-2)}\sum_{r\neq s\neq i} G_{ir}\gamma_{irs,q}\vv{\Sigma}_s\bigg|\vv{\Sigma}_i\right)\\
& - 2\cdot 4c_{i,p}\mu\left(\frac{1}{q-1}\sum_{r\neq i} G_{ir}\beta_{ir,q}\vv{\Sigma}_r, \frac{1}{(q-1)(q-2)}\sum_{r'\neq s'\neq i} G_{ir'}\gamma_{ir's',q}\vv{\Sigma}_{s'}\bigg|\vv{\Sigma}_i\right) + O(\frac{1}{p}),
\end{align*}
where we use the fact that $\gamma_{irs,q} = \gamma_{isr,q}$ and $\mu(\cdot,\cdot|\vv{\Sigma}_i)$
is bilinear. Further, we define
\begin{equation}
    \vv{R}_i = \alpha_{i,q}\vv{\Sigma}_i - \frac{1}{q-1}\sum_{r\neq i} G_{ir}\beta_{ir,q}\vv{\Sigma}_r + \frac{1}{(q-1)(q-2)}\sum_{r\neq s\neq i} G_{ir}\gamma_{irs,q}\vv{\Sigma}_{s}.
\end{equation}
By using similar technique as in (\ref{eq:H}), we find that
\begin{align*}
\vv{R}_i=&\left(A_{q-1}^{d_0}\right)^{-1}\sum_{i_2,\cdots,i_{d_0+1}\neq i} {\rm det}
\begin{pmatrix}
    \vv{\Sigma}_i & \vv{\Sigma}_{i_2} &\cdots&\vv{\Sigma}_{i_{d_0+1}} \\
    G_{i_2,i} & G_{i_2,i_2} & \cdots & G_{i_2,i_{d_0+1}} \\
    \vdots & \vdots &\ddots& \vdots \\
    G_{i_{d_0+1},i} & G_{i_{d_0+1},i_2} & \cdots & G_{i_{d_0+1},i_{d_0+1}}
\end{pmatrix}\\
=&\binom{q-1}{d_0}^{-1}\sum_{(i_2,\ldots,i_{d_0+1})\neq i} {\rm det}
\begin{pmatrix}
    \vv{\Sigma}_i & \vv{\Sigma}_{i_2} &\cdots&\vv{\Sigma}_{i_{d_0+1}} \\
    G_{i_2,i} & G_{i_2,i_2} & \cdots & G_{i_2,i_{d_0+1}} \\
    \vdots & \vdots &\ddots& \vdots \\
    G_{i_{d_0+1},i} & G_{i_{d_0+1},i_2} & \cdots & G_{i_{d_0+1},i_{d_0+1}}
\end{pmatrix}.
\end{align*}
In addition, under the null hypothesis, any $(d_0+1)$ distinct covariance matrices are linearly dependent, thus all determinants in the equation about $\vv{R}_i$ degenerate to $\vv{O}$.

Meanwhile, we recall that
\begin{align*}
    \alpha_{i,q} &= \binom{q-1}{d_0}^{-1} \sum_{(i_2,\ldots,i_{d_0+1})\neq i} {\rm det}(\vv{G}(i_2,\ldots,i_{d_0+1})) \\
    & = \binom{q}{d_0}^{-1} \sum_{(i_1,\ldots,i_{d_0})} {\rm det}(\vv{G}(i_1,\ldots,i_{d_0})) + O(\frac{1}{q})\\
    & = M_p^{(d_0)} + O(\frac{1}{q}),
\end{align*}
where the reminder can be uniformly bounded by a constant independent of $i$.
Therefore, we have
\begin{align*}
{\rm Var}(pH_{i,p})&=\alpha_{i,q}^2\left\{4c_{i,p}^2G_{ii}^2\right\}
+4c_{i,p}\mu\left(\vv{R}_i,\vv{R}_i|\vv{\Sigma}_i\right) + O(\frac{1}{p})\\
& = \left(M_p^{(d_0)}\right)^2\left\{4c_{i,p}^2G_{ii}^2\right\}+ 4c_{i,p}\mu\left(\vv{R}_i,\vv{R}_i|\vv{\Sigma}_i\right) + O(\frac{1}{p} + \frac{1}{q})\\
& \geq \left(M_p^{(d_0)}\right)^2\left\{4c_{i,p}^2G_{ii}^2\right\} + O(\frac{1}{p} + \frac{1}{q}),
\end{align*}
since $\mu(\vv{R}_i,\vv{R}_i|\vv{\Sigma}_i) \geq 0$.  In particular, under $H_0$, $\mu(\vv{R}_i,\vv{R}_i|\vv{\Sigma}_i) = 0$ and in this case
\[
{\rm Var}(pH_{i,p}) = \left(M_p^{(d_0)}\right)^2\left\{4c_{i,p}^2G_{ii}^2\right\} + O(\frac{1}{p}+\frac{1}{q}).
\]

Recall that $\tilde{M}_p^{(d_0+1)} = \frac{(d_0+1)}{q}\sum_{i=1}^q h_{1,i}(\vv{X}_{i,p})$ is a sum of independent random variable and
\begin{align*}
    {\rm Var}(p\tilde{M}_p^{(d_0+1)}) & = \frac{(d_0+1)^2}{q^2} \sum_{i=1}^q {\rm Var}(ph_{1,i}(\vv{X}_{i,p})) 
     = \frac{(d_0+1)^2}{q^2} \sum_{i=1}^q {\rm Var}(pH_{i,p})\\
     & = \frac{(d_0+1)^2}{q^2} \sum_{i=1}^q \left\{\left(M_p^{(d_0)}\right)^2\left\{4c_{i,p}^2G_{ii}^2\right\}
+4c_{i,p}\mu\left(\vv{R}_i,\vv{R}_i|\vv{\Sigma}_i\right) + O(\frac{1}{p})\right\}\\
& = \frac{4(d_0+1)^2}{q^2}\left\{\left(M_p^{(d_0)}\right)^2 \beta_p + r_p^{(d_0)}\right\} + O(\frac{1}{qp})\\
& = \frac{1}{q}\left(\sigma_p^{(d_0)}\right)^2,
\end{align*}
where $\sigma_p^{(d_0)}$, $\beta_p$ and $r_p^{(d_0)}$ are quantities given in Theorem \ref{thm:AN}. Moreover, by applying similar but tedious calculation as in Section \ref{app:A}, we have
\[
\sum_{j=1}^q \E[|ph_{1,i}(\vv{X}_{i,p})|^4] = \sum_{j=1}^q \E[|pH_{i,p}|^4] + O(\frac{q}{p^2})= O(q) 
\]
so that the Lyaponuouv condition holds, that is,
\[
\sum_{j=1}^q \E[|ph_{1,i}(\vv{X}_{i,p})|^4]\bigg/\left(\sum_{j=1}^q {\rm Var}(ph_{1,i}(\vv{X}_{i,p}))\right)^2 = O(\frac{q}{q^2}) = O(\frac{1}{q}) = o(1).
\]
Hence, the Lyaponouv CLT suggests that
\[
\frac{p\tilde{M}_p^{(d_0+1)}}{{\rm Var}(\tilde{M}_p^{(d_0+1)})^{1/2}} = \frac{q^{1/2}p \tilde{M}_p^{(d_0+1)}}{\sigma_p^{(d_0)}} \overset{d.}{\to} \mathcal{N}(0,1).
\]
The proof is then completed.

\section{Proof of Theorem \ref{thm:var}}\label{app:E}

We first show that $\hat{M}_p^{(d_0)}-M_p^{(d_0)}$ converges to $0$ in probability. Recall Equation (\ref{eq:Uexp}) that
\[
\E[\hat{M}_p^{(d_0)}] = M_p^{(d_0)} + O(\frac{1}{p^2}). 
\]
It suffices to prove that ${\rm Var}(\hat{M}_p^{(d_0)}) = O(1/p^2)$. Observe that
\begin{align*}
    {\rm Var}(\hat{M}_p^{(d_0)}) & = \binom{q}{d_0}^{-2} \sum_{\substack{I =(i_1,\ldots,i_{d_0})\\ J =(j_1,\ldots,j_{d_0})}} {\rm Cov}({\rm det}(\hat{\vv{G}}(I),\hat{\vv{G}}(J)))
\end{align*}
According to Lemma \ref{lem:B2}, there exists a constant $C$ dependent only on $d_0$ such that 
\[
{\rm Var}(p{\rm det}(\hat{G}(I))) \leq C~~~~\hbox{for any}~I =(i_1,\ldots,i_{d_0}).
\]
It then follows that
\[
{\rm Cov}({\rm det}(\hat{G}(I)),{\rm det}(\hat{G}(J))) \leq \frac{1}{p^2} {\rm Cov}(p{\rm det}(\hat{G}(I)),p{\rm det}(\hat{G}(J))) \leq \frac{C}{p^2}
\]
for any $I$ and $J$. Consequently,
\[
 {\rm Var}(\hat{M}_p^{(d_0)})  \leq \binom{q}{d_0}^{-2} \sum_{\substack{I =(i_1,\ldots,i_{d_0})\\ J =(j_1,\ldots,j_{d_0})}} |{\rm Cov}({\rm det}(\hat{\vv{G}}(I),\hat{\vv{G}}(J)))| \leq \frac{C}{p^2}. 
\]
Hence, our conclusion holds.

Next, we prove that $\hat{\beta}_p-\beta_p$ converges in probability to zero. Observe that
\begin{align*}
    |\hat{\beta}_p -\beta_p| &\leq  \frac{1}{q}\sum_{j=1}^q c_{j,p}^2 |\hat{G}_{jj}^2 - G_{jj}^2| \\
    &=\frac{1}{q}\sum_{j=1}^q c_{j,p}^2 |\hat{G}_{jj}+ G_{jj}|\cdot|\hat{G}_{jj}-G_{jj}| \\
    & \leq \left(\max_{j} \left\{|\hat{G}_{jj}|+G_{jj}\right\}\right) \cdot \frac{1}{q}\sum_{j=1}^q c_{j,p}^2|\hat{G}_{jj} - G_{jj}|. 
\end{align*}
On one hand, according to variance calculation in (\ref{eq:vr}), we have
\begin{align*}
{\rm Var}(p\hat{G}_{ii})&= {\rm Var}\left({\rm tr}(\vv{S}_{i,p}^2) - c_{i,p}\frac{1}{p}{\rm tr}(\vv{S}_{i,p})^2\right) + O(\frac{1}{p})\\
&=4c_{i,p}^2G_{ii}^2+4c_{i,p}\mu(\vv{\Sigma}_i,\vv{\Sigma}_i|\vv{\Sigma}_i)+O(\frac{1}{p}),
\end{align*}
which suggests that 
\[
\max_{j }{\rm Var}(\hat{G}_{jj}) \leq \frac{C}{p^2} 
\]
for certain constant $C$ independent of $\{p,q,n_j\}$ due to Assumptions \ref{assm:2.2}---\ref{assm:2.4}. Hence,
\begin{align*}
    \frac{1}{q}\sum_{j=1}^q c_{j,p}^2 \E|\hat{G}_{jj}-G_{jj}| \leq \frac{1}{q}\sum_{j=1}^q c_{jp}^2 \frac{\sqrt{C}}{p} = O(\frac{1}{p}).
\end{align*}
It implies that
\begin{equation}\label{eq:vr1}
    \frac{1}{q}\sum_{j=1}^q c_{j,p}^2 |\hat{G}_{jj} - G_{jj}| = o_p(1). 
\end{equation}

On the other hand, notice that
\[
\max_{j}|\hat{G}_{jj}| \leq \max_j G_{jj} + \max_j |\hat{G}_{jj}-G_{jj}|
\]
and $\max_j G_{jj}$ is uniformly bounded due to Assumption \ref{assm:2.2}. It suffices to show that 
\begin{equation}\label{eq:vr2}
    \max_{j} |\hat{G}_{jj}-G_{jj}| = o_p(1).
\end{equation}
To see this, we observe that 
\begin{align*}
\P(|\hat{G}_{jj} - G_{jj}| >\varepsilon) 
    & \leq \frac{{\rm Var}(\hat{G}_{jj})}{\varepsilon^2} \leq \frac{C}{\varepsilon^2 p^2}
\end{align*}
for any $\varepsilon>0$. Then, we have
 \begin{align*}
 \P\left(\max_{j} |\hat{G}_{jj} - G_{jj}| > \varepsilon\right) & = \P\left(\bigcup_{j=1}^q \{|\hat{G}_{jj} -G_{jj}|> \varepsilon\}\right) \\
 & \leq \sum_{j=1}^q \P(|\hat{G}_{jj} - G_{jj}|> \varepsilon) \\
 & \leq \frac{qC^2}{\varepsilon^2 p^2} \to0,
 \end{align*}
 due to Assumption \ref{assm:2.4}. Combining (\ref{eq:vr1}) and (\ref{eq:vr2}), we see that
 \[
 |\hat{\beta}_p - \beta_p| \leq o(1) \cdot o(1) = o(1).
 \]
 In other words, $\hat{\beta}_p - \beta_p$ tends to zero in probability. 

Combing discussion above, we see that $(\hat{\sigma}_p^{(d_0)})^2$ is indeed consistent to $(\sigma_{0,p}^{(d_0)})^2$. Then, a direct application of Slutsky lemma implies the asymptotic normality of $q^{1/2}p\hat{M}_{p}^{(d_0+1)}/\hat{\sigma}_p^{(d_0)}$. The proof is completed. 

\section{Proof of Lemma \ref{lem:decomp}}\label{app:decomp}

First, we notice that $M^{(d)}_{p,K} >0$ since $\{\vv{\Sigma}_{j}\}_{j\leq q-K}$ are linearly independent.

Observe in (\ref{eq:e0}) that $\vv{\Sigma}_0$ is a linear combination of $\{\vv{\Sigma}_{j}\}_{j\leq q-K}$. Thus, we have $\vv{\Sigma}_0 \in \mathcal{H}_0$. 

Note that
\begin{align*}
    M^{(d)}_{p,K} \vv{\Sigma} & = \binom{q-K}{d}^{-1} \sum_{(i_1,\ldots,i_d)\in[q-K]} {\rm det}\begin{pmatrix}
    \vv{\Sigma} & \vv{\Sigma}_{i_1} & \cdots &\vv{\Sigma}_{i_d} \\
    0 &
    \langle\vv{\Sigma}_{i_1},\vv{\Sigma}_{i_1}\rangle&
    \cdots&
    \langle\vv{\Sigma}_{i_1},\vv{\Sigma}_{i_d}\rangle\\
    \vdots & \vdots& \ddots&\vdots\\
    0 &
    \langle\vv{\Sigma}_{i_d},\vv{\Sigma}_{i_1}\rangle&
    \cdots&
    \langle\vv{\Sigma}_{i_d},\vv{\Sigma}_{i_d}\rangle
    \end{pmatrix}
\end{align*}
Meanwhile, by Equation (\ref{eq:e0}) and (\ref{eq:eperp}), we also have 
\begin{equation}\label{eq:ee}
M^{(d)}_{p,K} \vv{\Sigma} = M^{(d)}_{p,K} \vv{\Sigma}_0 + M^{(d)}_{p,K}\vv{\Sigma}_\perp.
\end{equation}
It suffices to show that $\vv{\Sigma}_\perp\in\mathcal{H}_K^\perp$ by showing that for any $j\in\{1,\ldots,q-K\}$,
$
\langle \vv{\Sigma}_\perp,\vv{\Sigma}_{j}\rangle = 0.
$
In fact, for any $j=1,\ldots,q-K$ and $(i_1,\ldots,i_d)\in[q-K]$, the collection of $(d+1)$ matrices, $\{\vv{\Sigma}_{j},\vv{\Sigma}_{i_1},\ldots,\vv{\Sigma}_{i_d}\}$, must be linearly dependent, since $\mathcal{H}_K$ has dimension $d$. It then follows that
\[
{\rm det}\begin{pmatrix}
    \langle\vv{\Sigma}_{j},\vv{\Sigma} \rangle&  \langle\vv{\Sigma}_{j},\vv{\Sigma}_{i_1}\rangle & \cdots & \langle\vv{\Sigma}_{j},\vv{\Sigma}_{i_d} \rangle \\
    \langle\vv{\Sigma}_{i_1},\vv{\Sigma}\rangle &
    \langle\vv{\Sigma}_{i_1},\vv{\Sigma}_{i_1}\rangle&
    \cdots&
    \langle\vv{\Sigma}_{i_1},\vv{\Sigma}_{i_d}\rangle\\
    \vdots & \vdots& \ddots&\vdots\\
    \langle\vv{\Sigma}_{i_d},\vv{\Sigma}\rangle &
    \langle\vv{\Sigma}_{i_d},\vv{\Sigma}_{i_1}\rangle&
    \cdots&
    \langle\vv{\Sigma}_{i_d},\vv{\Sigma}_{i_d}\rangle
    \end{pmatrix} =0,
    \]
since rows of the determinant are linear dependent.
Thus, we have
\begin{align*}
M^{(d)}_{p,K}\langle \vv{\Sigma}_\perp,\vv{\Sigma}_{j}\rangle = \binom{q-K}{d}^{-1} \sum_{(i_1,\ldots,i_d)\in[q-K]} {\rm det}\begin{pmatrix}
    \langle\vv{\Sigma}_{j},\vv{\Sigma} \rangle&  \langle\vv{\Sigma}_{j},\vv{\Sigma}_{i_1}\rangle & \cdots & \langle\vv{\Sigma}_{j},\vv{\Sigma}_{i_d} \rangle \\
    \langle\vv{\Sigma}_{i_1},\vv{\Sigma}\rangle &
    \langle\vv{\Sigma}_{i_1},\vv{\Sigma}_{i_1}\rangle&
    \cdots&
    \langle\vv{\Sigma}_{i_1},\vv{\Sigma}_{i_d}\rangle\\
    \vdots & \vdots& \ddots&\vdots\\
    \langle\vv{\Sigma}_{i_d},\vv{\Sigma}\rangle &
    \langle\vv{\Sigma}_{i_d},\vv{\Sigma}_{i_1}\rangle&
    \cdots&
    \langle\vv{\Sigma}_{i_d},\vv{\Sigma}_{i_d}\rangle
    \end{pmatrix} =0,
\end{align*}
The conclusion then follows.

Finally, by (\ref{eq:Me}) and (\ref{eq:ee}), we have
\begin{align*}
    M^{(d)}_{p,K}(\vv{\Sigma}) & = \binom{q-K}{d}^{-1}\sum_{(i_1,\ldots,i_d)\in[q-K]} {\rm det}(\vv{G}(\vv{\Sigma};i_1,\ldots,i_d))\\
    &=\left\langle \vv{\Sigma},\binom{n}{d}^{-1}\sum_{(i_1,\ldots,i_d)} {\rm det}\begin{pmatrix}
    \vv{\Sigma} & \vv{\Sigma}_{i_1} & \cdots &\vv{\Sigma}_{i_d} \\
    \langle\vv{\Sigma}_{i_1},\vv{\Sigma}\rangle &
    \langle\vv{\Sigma}_{i_1},\vv{\Sigma}_{i_1}\rangle&
    \cdots&
    \langle\vv{\Sigma}_{i_1},\vv{\Sigma}_{i_d}\rangle\\
    \vdots & \vdots& \ddots&\vdots\\
    \langle\vv{\Sigma}_{i_d},\vv{\Sigma}\rangle &
    \langle\vv{\Sigma}_{i_d},\vv{\Sigma}_{i_1}\rangle&
    \cdots&
    \langle\vv{\Sigma}_{i_d},\vv{\Sigma}_{i_d}\rangle
    \end{pmatrix}\right\rangle \\
    & = \langle \vv{\Sigma}, M^{(d)}_{p,K}\vv{\Sigma}_\perp\rangle = M^{(d)}_{p,K} \langle\vv{\Sigma}_\perp,\vv{\Sigma}_\perp\rangle. 
\end{align*}
The proof is completed.

\section{Proof of Theorem \ref{thm:pw}}\label{app:D}   

According to Theorem \ref{thm:AN}, $q^{1/2}p(\hat{M}_p^{(d_0+1)}-M_p^{(d_0+1)})/\sigma_p^{(d_0)}\overset{d.}{\to} \mathcal{N}(0,1)$, where 
\[
(\sigma_p^{(d_0)})^2 = 4(d_0+1)^2\left\{(M_p^{(d_0)})^2\beta_p + r_p^{(d_0)}\right\}.
\]
We conclude that 
\begin{equation}\label{eq:Mapp}
    M_p^{(d_0)} = M_{p,K}^{(d_0)} + o(1)
\end{equation}
and
\begin{equation}\label{eq:Rapp}
    r_p^{(d_0)} = o(1)+ O(\frac{K}{q}) = o(1)
\end{equation}
under Assumptions \ref{assm:2.1} --- \ref{assm:2.5}. It immediately follows that $q^{1/2}p(\hat{M}_p^{(d_0+1)})/2(d_0+1)M_{p,K}^{(d_0)}\beta_p^{1/2}\overset{d.}{\to}\mathcal{N}(0,1)$. In what follows, we show Equations (\ref{eq:Mapp}) and (\ref{eq:Rapp}) hold. In fact, we let $J_0 =\{q-K+1,\ldots,q\}$. According to (\ref{eq:M}) and (\ref{eq:Mpk}), we find 
\begin{align*}
    M_p^{(d_0)} &= \binom{q}{d_0}^{-1}\sum_{I =(i_1,\ldots,i_{d_0})} {\rm det}(\vv{G}(I)) \\
    & = M_{p,K}^{(d_0)} + \binom{q}{d_0}^{-1}\sum_{l=1}^{\min\{K,d_0\}} \sum_{|I\cap J_0| = l} {\rm det}(\vv{G}(i_1,\ldots,i_{d_0})). 
\end{align*}
Due to Assumptions \ref{assm:2.2} --- \ref{assm:2.4}, there exists a constant $C'$ such that for any $I = (i_1,\ldots,i_{d_0})\in[q]$, 
\[
    0\leq {\rm det}(\vv{G}(I)) \leq C'.
\]
It then follows that for any $l=1,\ldots,{\min\{K,d_0\}} $,
\[
0\leq \sum_{|I\cap J_0| = l} {\rm det}(\vv{G}(I)) \leq C'\binom{K}{l}\binom{q-K}{d_0-l}. 
\]
Thus, by Assumption \ref{assm:2.5}, (\ref{eq:Mapp}) holds.

As for (\ref{eq:Rapp}), we observe that under Assumptions \ref{assm:2.2}---\ref{assm:2.4}, matrices
\[
{\rm det}\begin{pmatrix}
    \vv{\Sigma}_{i_1} & \vv{\Sigma}_{i_2} &\cdots & \vv{\Sigma}_{i_{d_0+1}}\\
    G_{i_2,i_1} & G_{i_2,i_2} & \cdots & G_{i_2,i_{d_0+1}}\\
    \vdots&\vdots&\ddots&\vdots \\
    G_{i_{d_0+1},i_1} & G_{i_{d_0+1},i_2} &\cdots& G_{i_{d_0+1},i_{d_0+1}}
\end{pmatrix}
\]
are uniformly bounded in operator norm by certain constant $C'$. Hence, by definition \ref{eq:Ri}, $\|\vv{R}_i\| \leq C'$ for any $i$. In particular, for $i=1,\ldots,q-K$,
\begin{align*}
    \vv{R}_i &= \binom{q-1}{d_0}^{-1}\sum_{J=(j_1,\ldots,j_{d_0})\neq i} {\rm det}\begin{pmatrix}
    \vv{\Sigma}_{i} & \vv{\Sigma}_{j_1} &\cdots & \vv{\Sigma}_{j_{d_0}}\\
    G_{j_1,i} & G_{j_1,j_1} & \cdots & G_{j_1,i_{d_0}}\\
    \vdots&\vdots&\ddots&\vdots \\
    G_{j_{d_0},i} & G_{j_{d_0},j_1} &\cdots& G_{j_{d_0},j_{d_0}}
\end{pmatrix}\\
    & = \binom{q-1}{d_0}^{-1}\sum_{l=1}^{\min\{K,d_0\}} \sum_{|J\cap J_0|=l} {\rm det}\begin{pmatrix}
    \vv{\Sigma}_{i} & \vv{\Sigma}_{j_1} &\cdots & \vv{\Sigma}_{j_{d_0}}\\
    G_{j_1,i} & G_{j_1,j_1} & \cdots & G_{j_1,i_{d_0}}\\
    \vdots&\vdots&\ddots&\vdots \\
    G_{j_{d_0},i} & G_{j_{d_0},j_1} &\cdots& G_{j_{d_0},j_{d_0}}.
\end{pmatrix}
\end{align*}
So, we see that for any $i=1,\ldots,q-K$,
\begin{align*}
    \|\vv{R}_i\| & \leq \binom{q-1}{d_0}^{-1}\sum_{l=1}^{\min\{K,d_0\}}  \binom{K}{l}\binom{q-K-1}{d_0-l} C' = o(1)
\end{align*}
Hence,
\begin{align*}
    r_p^{(d_0)} &= \frac{1}{q} \left\{\sum_{j=1}^{q-K} + \sum_{j=q-K+1}^q\right\} \mu(\vv{R}_i,\vv{R}_i|\vv{\Sigma}_i) \\
    & \leq \frac{q-K}{q} o(1) + \frac{K}{q} C' = o(1),
\end{align*}
which proves (\ref{eq:Rapp}).

Now, we see that under $H_1$, it has
    \begin{equation}\label{eq:palt}
    \mathbb{P}\left(\frac{\sqrt{q} p\hat{M}^{(d_0+1)}_p}{2(d_0+1)M_{p,K}^{(d_0)}\beta_p^{1/2}} > z_{\alpha}\right) =\Phi\left(\frac{\sqrt{q} pM_p^{(d_0+1)}}{2(d_0+1)M_{p,K}^{(d_0)} \beta_p^{1/2}} - z_{\alpha}\right) +o(1).
     \end{equation}
We next show that Equation (\ref{eq:ratio}) holds. It suffices for us to focus on the denominator. In fact,
since ${\rm det}(\vv{G}(I)) \geq 0$, we have
\begin{align*}
    M_p^{(d_0+1)} & = \binom{q}{d_0+1}^{-1}\sum_{I = (i_1,\ldots,i_{d_0+1})}{\rm det}(\vv{G}(I)) \\
    & =\binom{q}{d_0+1}^{-1}\sum_{l=1}^{\min\{K,d_0\}}  \sum_{|I\cap J_0| = l}{\rm det}(\vv{G}(I)) \\
    & = \binom{q}{d_0+1}^{-1}\left\{\sum_{j=1}^K \sum_{I: I\cap J_0=\{q-K+j\}}{\rm det}(\vv{G}(I)) + \sum_{l= 2}^{\min\{K,d_0\}}  \sum_{|I\cap J_0| = l}{\rm det}(\vv{G}(I))\right\} \\
    & = \binom{q-K}{d_0}\binom{q}{d_0+1}^{-1} \sum_{j=1}^K M_{p,K}^{(d_0)}(\vv{\Sigma}_{q-K+j})\\
    & + |O\left(\sum_{l=2}^{\min\{K,d_0\}} \binom{K}{l}\binom{q-K}{d_0+1-l}\binom{q}{d_0+1}^{-1}\right)|\\
    & = \binom{q-K}{d_0}\binom{q}{d_0+1}^{-1} M_{p,K}^{(d_0)} \sum_{j=1}^K \langle\vv{\Sigma}_{\perp,q-K+j},\vv{\Sigma}_{\perp,q-K+j}\rangle+O(\frac{d_0K}{q})~~~~(\hbox{by (\ref{eq:detdecomp})})\\
    &=\frac{(d_0+1)}{q} M_{p,K}^{(d_0)}\left\{\sum_{j=1}^K \frac{1}{p}{\rm tr}(\vv{\Sigma}_{\perp,q-K+j}^2)\right\}\left\{\binom{q-K}{d_0}\binom{q-1}{d_0}^{-1}\right\}+|O(\frac{d_0K}{q})|
\end{align*}
    So, (\ref{eq:outbound}) indeed holds.
    
Similar to the  discussion in Appendix \ref{app:E}, it is easy for us to see that $\hat{M}_{p}^{(d_0)} - M_p^{(d_0)} = o_p(1)$. Combining with (\ref{eq:Mapp}), we find
\[
\hat{M}_{p}^{(d_0)} - M_{p,K}^{(d_0)} = \hat{M}_{p}^{(d_0)} - M_p^{(d_0)} + M_p^{(d_0)} - M_{p,K}^{(d_0)} = o_p(1).
\]
Meanwhile, since $\hat{\beta}_p -\beta_p = o_p(1)$, we see that 
\[
(\hat{\sigma}_{p}^{(d_0)})^2 - 4(d_0+1)^2\{M_{p,K}^{(d_0)}\}\beta_p = o_p(1)
\]
and thus
\[
\frac{q^{1/2}p(\hat{M}_p^{(d_0+1)}-M_p^{(d_0+1)})}{\hat{\sigma}_{p}^{(d_0)}}\overset{d.}{\to}\mathcal{N}(0,1)
\]
by the Slutsky lemma. Consequently, by (\ref{eq:palt}), we find
\begin{align*}
    \P_{H_1}\left(\frac{q^{1/2}p\hat{M}_p^{(d_0+1)}}{\hat{\sigma}_{p}^{(d_0)}}>z_\alpha\right) & = \mathbb{P}\left(\frac{\sqrt{q} p\hat{M}^{(d_0+1)}_p}{2(d_0+1)M_{p,K}^{(d_0)}\beta_p^{1/2}} > z_{\alpha}\right) + o(1)\\
    & =\Phi\left(\frac{\sqrt{q} pM_p^{(d_0+1)}}{2(d_0+1)M_{p,K}^{(d_0)} \beta_p^{1/2}} - z_{\alpha}\right) +o(1)\\
    & \geq \Phi\left(\frac{p}{2\sqrt{q}\beta_p^{1/2}}\sum_{j=1}^K \frac{1}{p}{\rm tr}(\vv{\Sigma}_{\perp,q-K+j}^2)  - z_{\alpha}\right)+ o(1).
\end{align*}
The conclusion (\ref{eq:altpw}) immediately follows.

\end{document}